\newtheorem{definition}{Definition}
\newtheorem{theorem}[definition]{Theorem}
\newtheorem{remark}[definition]{Remark}
\newtheorem{assumption}[definition]{Assumption}
\newtheorem{notation}[definition]{Notation}
\newtheorem{abbreviation}[definition]{Abbreviation}
\DeclareMathOperator{\rot}{rot}
\DeclareMathOperator{\diverg}{div}
\renewcommand{\theequation}{\arabic{section}.\arabic{equation}}
\renewcommand{\thetable}{\arabic{section}.\arabic{table}}
\renewcommand{\thefigure}{\arabic{section}.\arabic{figure}}
\newcommand{\kO}{o}
\newcommand{\gO}{\mathcal{O}}
\newcommand{\diffd}{\mathsf{d}}
\newcommand{\diffD}{\mathsf{D}}
\renewcommand{\vec}[1]{\bm{\mathsf{#1}}}
\newcommand{\R}{\mathds{R}}
\newcommand{\N}{\mathds{N}}
\newcommand{\E}{\mathds{E}}
\renewcommand{\S}{\mathds{S}}
\newcommand{\SO}{\mathcal{SO}}
\newcommand{\aset}[1]{\mathcal{#1}}
\newcommand{\mat}[1]{\bm{\mathsf{#1}}}
\let\Re\relax
\DeclareMathOperator{\Re}{Re}
\DeclareMathOperator{\Fr}{Fr}
\DeclareMathOperator{\diag}{diag}
\DeclareMathOperator{\tr}{tr}
\newtheorem{lem}[definition]{Lemma}
\newtheorem{cor}[definition]{Corollary}
\begin{document}

\title[Macroscopic stresses in dilute suspension of weakly inertial particles]
{Modeling of macroscopic stresses in a dilute suspension of small weakly inertial particles}

\author[A.~Vibe]{Alexander Vibe$^{1,\star}$}
\author[N.~Marheineke]{Nicole Marheineke$^{1}$}

\date{\today\\
$^\star$ \textit{Corresponding author}, email: vibe@math.fau.de, phone: +49\,9131\,85\,67215\\
$^1$ FAU Erlangen-N\"urnberg, Lehrstuhl Angewandte Mathematik I, Cauerstr.~11, D-91058 Erlangen, Germany
}

\begin{abstract}
In this paper we derive asymptotically the macroscopic bulk stress of a suspension of small inertial particles in an incompressible Newtonian fluid. We apply the general asymptotic framework to the special case of ellipsoidal particles and show the resulting modification due to inertia on the well-known particle-stresses based on the theory by Batchelor and Jeffery.
\end{abstract}

\maketitle

\noindent
{\sc Keywords.} Fluid mechanics, suspensions, bulk stress of particle suspension, stationary Stokes problem, Jeffery’s equation, small immersed rigid body, asymptotic analysis\\
{\sc AMS-Classification.} 76Axx, 76Dxx, 76Mxx,  76Txx


\section{Introduction}\label{sec_0_intro}
Suspensions of small, arbitrarily shaped particles in a Newtonian fluid are of great interest in physical, biological and engineering sciences, see among others \cite{Russel2001, Dupire2012, Lindstrom2008}. Here one of the important questions, which often arises, is how the carrier flow is influenced by the suspended solids.
This effect strongly depends on the particle size and mass, the characteristics of the flow but also on the scale of interest. In general, the initial configuration of the particles in the fluid domain involves some kind of stochastic nature, additionally the particles may interact with each other and also turbulent flow fluctuations may strongly alter the deterministic path of each particle. Thus looking on a cut-out of the domain on the microscale, where each particle has a significant size and that way a strong impact on the streamlines, the problem is dominated by stochastic effects. On the macroscale on the other hand, where the size of the particles is small compared to the dimensions of the flow, the suspension can be described as a homogeneous, non-Newtonian fluid and the task consists of modeling the corresponding bulk stresses.

One step towards the derivation of the macroscopic suspension behavior is the study of the two-way coupled particle-fluid problem. Oberbeck described the disturbance flow generated by an ellipsoidal particle moving with a constant translational velocity through a stationary viscous fluid in \cite{Oberbeck1876}, Edwardes extended these results to the case of a rotating ellipsoid in \cite{Edwardes1893}. In the case of a dilute suspension of small rigid spheres in a stationary Newtonian fluid, Einstein gave a derivation of the change in viscosity of the carrier fluid by first considering the motion of a single particle in a sufficient small region, such that the corresponding flow may be treated as linear, and then solving the stationary Stokes equations for the disturbance flow. The contribution of many particles followed by superposition of every single disturbance flow field \cite{Einstein1906}. An analogous procedure was applied to ellipsoidal inertia-free particles by Jeffery \cite{Jeffery1922}. The model equation that describes the evolution of the principal axis of a prolate ellipsoid of revolution in a viscous fluid is known as Jeffery's equation. Batchelor presented a general framework of modeling stresses in a suspension of rigid particles in an incompressible Navier-Stokes flow \cite{Batchelor1970}. In the case of a dilute suspension, he applied his theory to ellipsoidal inertia-free particles using the results of Jeffery and derived an analytical term for the particle-stresses in the macroscopic description of the suspension. The study of non-dilute suspensions involves the modeling of the interactions between particles, see for example \cite{Batchelor1971, Leal1971, Dinh1981, Folgar1984}. Another general approach for suspension modeling was presented in~\cite{Zhang2010}, where Cauchy's stress principle was applied to the suspension, reproducing classical results as well as showing additional effects induced by spatial non-uniformities of the dispersed phase. The influence of particle concentration was also considered in~\cite{Prosperetti2004,Prosperetti2006}. Rigorous asymptotical homogenization strategies for suspensions were used for example in~\cite{Khruslov2004,Berezhnyi2013}. The work by Junk \& Illner \cite{Junk2007} deals with the strict asymptotic derivation of Jeffery's equation, using expansions in the small size ratio (ratio between the characteristic length scale associated to the particle and the one associated to the fluid). It yields a general strategy for constructing a correction to the undisturbed Navier-Stokes solution to account for the presence of a particle. 

In this paper we extend the asymptotic approach by Junk \& Illner by taking into account inertia. We particularly classify three different types of inertia and use the asymptotic results for a single particle in the general framework of suspension modeling according to Batchelor. We study the case of a dilute suspension and consider particle-fluid interactions as well as gravitational forces. The regard of inertia requires the introduction of a special splitting for the forces associated with the bulk stresses. This way we deduce an extended influence of the solid phase on the macroscopic behavior of the suspension induced by small deviations of the particle movement from the streamlines of the carrier fluid (Theorem~\ref{thm_kinMod_centralthm}). Apart from a modification of the particle-stress tensor we obtain an additional effective particle force term. We demonstrate the inertia related effects for the well-studied example of a dilute suspension of (prolate) ellipsoidal particles (Corollary~\ref{lem_specCase_ellipsoidalSusp}).

We organize this paper as follows:
In Section~\ref{sec_1_mathMod} we set up the model for a single particle in a fluid flow with respect to different types of inertia. We give a brief overview of the asymptotic derivation including the essential definitions and assumptions. In Section~\ref{sec_3_kinMod} we sketch the basic ideas of Batchelor's framework and use the asymptotic results to formulate corresponding consistent suspension models which account for weakly inertial particles. We illustrate the results for the special case of ellipsoidal particles in Section~\ref{sec_4_specCase} and conclude with a summary in Section~\ref{sec_5_conc}. Technical details to the example are provided in the Appendix. 

Throughout this paper we use the following notation:
\begin{notation}\label{not_intro_vec}
Scalar values are typeset as ordinary characters $a\in\R$. Vectors are indicated by small bold characters, $\vec{v}\in\R^n$, $n>1$, their components are denoted by $(\vec{v})_i=v_i\in\R$, $i=1,\ldots,n$. In the following we will omit the ranges of the indices if they are obvious. Matrices are written as large bold letters, $\mat{M}\in\R^{n\times m}$ with the components $(\mat{M})_{ij}=M_{ij}$, the identity matrix is denoted by $\mat{I}$. Tensors of higher order are treated as linear mappings between the corresponding spaces and written as ordinary large characters, e.g. $B:\R^3\to\R^{3\times3}$, or as the corresponding dyadic product.
 
We use a tensor calculus notation, viz.
$\vec{v}\cdot\vec{w}=\sum_{i}v_iw_i\in\R$ for all vectors $\vec{w},\vec{v}$, resp. $(\mat{A}\cdot\vec{v})_i=\sum_{j}A_{ij}v_j$ for $\mat{A}\in\R^{n\times m}$, $\vec{v}\in\R^{m}$, and $(\mat{A}\cdot\mat{B})_{ik}=\sum_{j}A_{ij}B_{jk}$ for $\mat{A}\in\R^{n\times m},\mat{B}\in\R^{m\times\ell}$. The dyadic product $\vec{v}\otimes\vec{w}\in\R^{n\times m}$ of two vectors $\vec{v}\in\R^n,\vec{w}\in\R^m$ involves $(\vec{v}\otimes\vec{w})\cdot\vec{u}=(\vec{w}\cdot\vec{u})\vec{v}$ for all $\vec{u}\in\R^{m}$, analogously for higher order tensors: $\vec{u}_1\otimes\vec{u}_2\otimes\ldots\otimes\vec{u}_n$. A two-dot product is denoted by $(\vec{v}\otimes\vec{w}):(\vec{x}\otimes\vec{y})=(\vec{v}\cdot\vec{x})(\vec{w}\cdot\vec{y})$ for $\vec{v},\vec{x}\in\R^{n}$, $\vec{w},\vec{y}\in\R^m$, analogously $(\vec{u}\otimes\vec{v}\otimes\vec{w}):(\vec{x}\otimes\vec{y})=(\vec{v}\cdot\vec{x})(\vec{w}\cdot\vec{y})\vec{u}$ and so on.
For $\vec{v},\vec{w}\in\R^3$ the cross-product is given by $(\vec{v}\times\vec{w})_k=\sum_{i,j}\epsilon_{ijk}v_iw_j$, with the Levi-Civita symbol
\begin{align*}
\epsilon_{ijk}=\begin{cases}
                 1,&\text{for $(i,j,k)$ an even permutation of $(1,2,3)$},\\
                 -1,&\text{for $(i,j,k)$ an odd permutation of $(1,2,3)$},\\
                 0,&\text{else}.
                \end{cases}
\end{align*}
We use the mapping $B(\vec{v})_{ij}=\sum_k\epsilon_{ikj}v_k$ which assigns a vector to a skew-symmetric matrix such that $B(\vec{v})\cdot\vec{x}=\vec{v}\times\vec{x}$ for all $\vec{x},\vec{v}\in\R^3$.

As for the differential operators, the Jacobian is denoted by $(\partial_{\vec{y}}\vec{f})_{ij}=\partial_{y_j}f_i$ and the gradient by its transpose $(\nabla_{\vec{y}}\vec{f})_{ij}=\partial_{y_i}f_j$. For scalar-valued functions these notations are certainly interchangeable. As usual, the nabla operator is formally treated as a vector and for example used for the divergence and rotation operators: $\diverg(\vec{u})=\nabla\cdot\vec{u}$, $\rot(\vec{u})=\nabla\times\vec{u}$. Higher order derivatives are noted as $\partial_{\vec{y}\ldots\vec{y}}\vec{u}$ for a sufficiently smooth $\vec{u}$.
\end{notation}


\setcounter{equation}{0} \setcounter{figure}{0}
\section{Mathematical model of a single particle in a fluid}\label{sec_1_mathMod}
In this section we provide the ingredients for the suspension model. For this purpose, we consider a single small oriented particle suspended in a Newtonian fluid and study its influence on the carrier flow under the assumption that the particle is small compared with the dimensions of the flow. Proceeding from the three-dimensional interface problem we derive an asymptotic model for the particle in the fluid flow by help of expansions in the size parameter $\epsilon$ (size ratio between particle and flow domain). The asymptotic analysis follows the procedure that was established for inertia-free particles in \cite{Junk2007}. The novelty is the extension to inertial particles which we classify with respect to different types.

\subsection{Three-dimensional interface problem}\label{subsec_mathMod_threedimInterface}

\subsubsection*{Dimensional problem}
A particle is a rigid body that we model as an open, bounded and time-dependent domain $\aset{E}(t)\subset\R^3$ with a smooth boundary $\partial\aset{E}(t)$ and time $t\in\R^+_0$. The particle is characterized by its center of mass $\vec{c}:\R_0^+\to\R^3$, $\vec{c}(t)=|\aset{E}(t)|^{-1}\int_{\aset{E}(t)}\vec{x}\diffd\vec{x}$ and an orthonormal right-handed director triad $\vec{d}_i:\R_0^+\to\S^2_1$, $i=1,2,3$, where $\S^2_1$ denotes the unit sphere in $\R^3$. Additionally we define a time-invariant reference state $\aset{E}\subset\R^3$ with the center of mass lying in the origin and the directors identified with the Cartesian basis vectors $\{\vec{e}_1,\vec{e}_2,\vec{e}_3\}$. The function which maps every point of the reference state to the actual time-dependent state is completely presented by rigid body motions (translation and rotation), i.e.\ $\vec{x}:\aset{E}\times\R_0^+\to\aset{E}(t)$,  $\vec{x}(\vec{y},t)=\mat{R}(t)\cdot\vec{y}+\vec{c}(t)$ (see Figure~\ref{fig_mathMod_path}), with $\mat{R}\in\SO(3)$ being the rotation matrix associated with the director triad, $R_{ij}=\vec{e}_i\cdot\vec{d}_j$.
\begin{figure}
\centering
\scalebox{1}{
\psset{unit=1cm}
 \begin{pspicture}(0,0)(14,5)
  \psline[linewidth=1pt]{->}(3.5,1.5)(5.5,1.5)
  \psline[linewidth=1pt]{->}(3.5,1.5)(3.5,3.5)
  \rput(3.25,3.5){$\vec{e}_2$}
  \rput(5.5,1.25){$\vec{e}_1$}
  \rput(3.375,1.375){$\vec{0}$}
  \psline[linewidth=1pt]{->}(8.0,1.5)(10.0,1.5)
  \psline[linewidth=1pt]{->}(8.0,1.5)(8.0,3.5)
  \rput(7.75,3.5){$\vec{e}_2$}
  \rput(10,1.25){$\vec{e}_1$}
  \rput(7.875,1.375){$\vec{0}$}
  \psbezier[showpoints=false,linewidth=1pt]{-}(2.5,1.5)(2,2.5)(2,3.5)(3.25,3.0)
  \psbezier[showpoints=false,linewidth=1pt]{-}(3.25,3.0)(4.25,2.5)(4.75,1.5)(4.5,1.25)
  \psbezier[showpoints=false,linewidth=1pt]{-}(4.5,1.25)(3.75,.5)(3.25,.5)(2.5,1.5)
  \rput(2.75,2.75){$\aset{E}$}
  \psline[linewidth=1pt,linestyle=dashed]{->}(8,1.5)(11,2.5)
  \rput(10.625,2.75){$\vec{c}(t)$}
  \psline[linewidth=1pt]{->}(11,2.5)(12.4142,1.0858)
  \psline[linewidth=1pt]{->}(11,2.5)(12.4142,3.9142)
  \rput(11.875,1){$\vec{d}_1(t)$}
  \rput(12.5,3.5){$\vec{d}_2(t)$}
  \psbezier[showpoints=false,linewidth=1pt]{-}(10.2929,3.2071)(10.6464,4.2678)(11.3536,4.9749)(11.8839,3.7374)
  \psbezier[showpoints=false,linewidth=1pt]{-}(11.8839,3.7374)(12.2374,2.6768)(11.8839,1.6161)(11.5303,1.6161)
  \psbezier[showpoints=false,linewidth=1pt]{-}(11.5303,1.6161)(10.4697,1.6161)(10.1161,1.9697)(10.2929,3.2071)
  \rput(11.25,3.8){$\aset{E}(t)$}
  \psbezier[showpoints=false,linewidth=1pt,linestyle=dotted]{<-}(4.5,.75)(5,0)(8.5,0)(9,.75)
  \psbezier[showpoints=false,linewidth=1pt,linestyle=dotted]{->}(4.5,4.25)(5,5)(8.5,5)(9,4.25)
  \rput(6.75,4.5){$\vec{x}(\vec{y},t)$}
  \rput(6.75,0.5){$\vec{y}(\vec{x},t)$}
 \end{pspicture}
}
 \caption{Lagrangian description, bijective mapping between reference state and the actual time-dependent state.}\label{fig_mathMod_path}
\end{figure}
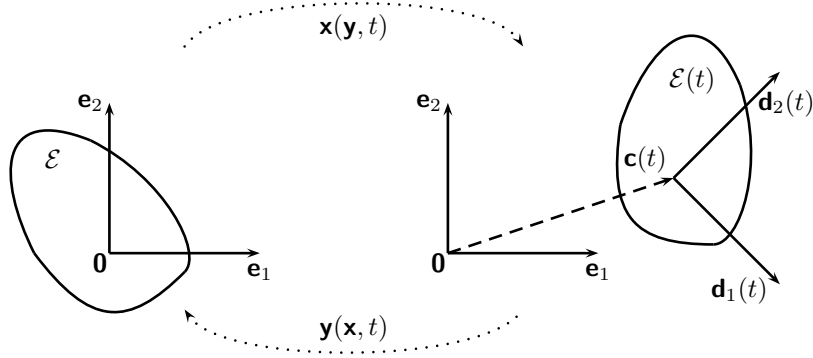
We assume that the small oriented particle is suspended in an incompressible Newtonian fluid. The regular domain $\Omega$ contains the fluid and the particle. The acting forces are due to particle-fluid interaction and gravity. The model of first principles for the particle and the flow consists for all $t>0$ of the incompressible Navier-Stokes equations in the fluid domain $\Omega\setminus\overline{\aset{E}}(t)$, the no-slip condition on the interface as well as the kinematics and dynamics of the particle:
\begin{subequations}\label{eq_mathMod_full_dimensional}
\begin{align}\label{eq_mathMod_NSE_dimensional}
\hspace*{-0.3cm} \rho_f\left(\partial_t\vec{u}(\vec{x},t)+(\vec{u}(\vec{x},t)\cdot\nabla)\vec{u}(\vec{x},t)\right)&=\nabla\cdot\mat{S}[\vec{u}]^T(\vec{x},t)+\rho_fg\vec{e}_g,\qquad \nabla\cdot\vec{u}=0,\quad  \vec{x}\in\Omega\setminus\overline{\aset{E}}(t),\\
 \vec{u}(\vec{x},t)&=\vec{\omega}(t)\times\left(\vec{x}-\vec{c}(t)\right)+\vec{v}(t), \quad \hspace*{2.1cm} \vec{x}\in\partial\aset{E}(t),\\\label{eq_mathMod_R_dimensional}
 \frac{\diffd}{\diffd t}\vec{c}(t)=\vec{v}(t),&\qquad \frac{\diffd}{\diffd t}\mat{R}(t)=B(\vec{\omega}(t))\cdot\mat{R}(t),
 \end{align}
 \begin{align}
 \rho_p|\aset{E}|\frac{\diffd}{\diffd t}\vec{v}(t)&=\int_{\partial\aset{E}}\mat{S}[\vec{u}](\vec{x}(\vec{y},t),t)\cdot\mat{R}(t)\cdot\vec{n}(\vec{y})\,\diffd s(\vec{y})+\rho_p|\aset{E}|g\vec{e}_g,\\
 \rho_p|\aset{E}|\frac{\diffd}{\diffd t}\left(\mat{J}(t)\cdot\vec{\omega}(t)\right)&=\mat{R}(t)\cdot\int_{\partial\aset{E}}\vec{y}\times\left(\mat{R}(t)^T\cdot\mat{S}[\vec{u}](\vec{x}(\vec{y},t),t)\cdot\mat{R}(t)\cdot\vec{n}(\vec{y})\right)\,\diffd s(\vec{y}).
 \end{align}
\end{subequations}
Here, $\vec{u}$ and $p$ denote the fluid velocity and pressure, $\mat{S}[\vec{u}]=-p\mat{I}+\mu_f\left(\nabla\vec{u}+\nabla\vec{u}^T\right)$ the Newtonian stress tensor with $\mu_f$ being the fluid viscosity and $\rho_f$ the fluid density. The particle related quantities $\vec{v}$ and $\vec{\omega}$ describe the linear and angular velocities of the rigid body. The inertia tensor is given by $\mat{J}(t)=\mat{R}(t)\cdot\hat{\mat{J}}\cdot\mat{R}(t)^T$ with the time-invariant part $\hat{\mat{J}}=|\aset{E}|^{-1}\int_{\aset{E}}\|\vec{y}\|^2\mat{I}-\vec{y}\otimes\vec{y}\,\diffd\vec{y}$ and $\rho_p$ is the particle density. The gravitational acceleration is given by $g$ with normalized direction $\vec{e}_g$. The system is completed by appropriate initial conditions for the flow and the particle as well as boundary conditions for the flow on $\partial\Omega$.

\subsubsection*{Dimensionless formulation}
We introduce two characteristic lengths associated with the fluid system $\overline{x}$ and with the particle $\overline{y}$ as well as a characteristic time $\overline{t}$. We scale the velocities as $\overline{u}=\overline{v}=\overline{x}/\overline{t}$ and the pressure with $\overline{p}=\overline{u}\mu_f/\overline{x}$. For the sake of a simple notation, we keep the same symbols for all variables as in the dimensional formulation and get the dimensionless system:
\begin{subequations}\label{eq_mathMod_dimFreeSystem}
\begin{align}
 \Re\left(\partial_t\vec{u}+(\vec{u}\cdot\nabla)\vec{u}\right)&=\nabla\cdot\mat{S}[\vec{u}]^T+\Re\Fr^{-2}\vec{e}_g,\qquad \nabla\cdot\vec{u}=0,\quad \vec{x}\in\Omega\setminus\overline{\aset{E}}(t),\label{eq_mathMod_navSt}\\
 \vec{u}&=\vec{\omega}\times\left(\vec{x}-\vec{c}\right)+\vec{v}, \quad \hspace*{21.7ex} \vec{x}\in\partial\aset{E}(t),\label{eq_mathMod_noslip}\\
 \frac{\diffd}{\diffd t}\vec{c}=\vec{v},&\qquad \frac{\diffd}{\diffd t}\mat{R}=B(\vec{\omega})\cdot\mat{R},\label{eq_mathMod_kin}\\
 \rho\epsilon\Re|\aset{E}|\frac{\diffd}{\diffd t}\vec{v}&=\int_{\partial\aset{E}}\mat{S}[\vec{u}]\cdot\mat{R}\cdot\vec{n}\,\diffd s+\rho\epsilon\Re\Fr^{-2}|\aset{E}|\vec{e}_g,\label{eq_mathMod_linMom}\\
 \rho\epsilon^2\Re|\aset{E}|\frac{\diffd}{\diffd t}\left(\mat{J}\cdot\vec{\omega}\right)&=\mat{R}\cdot\int_{\partial\aset{E}}\vec{y}\times\left(\mat{R}^T\cdot\mat{S}[\vec{u}]\cdot\mat{R}\cdot\vec{n}\right)\,\diffd s.\label{eq_mathMod_angMom}
\end{align}
\end{subequations}
The model~\eqref{eq_mathMod_dimFreeSystem} is characterized by four dimensionless parameters: the size ratio $\epsilon=\overline{y}/\overline{x}$, the density ratio $\rho=\rho_p/\rho_f$, the Reynolds number $\Re=\overline{u}\,\overline{x}\rho_f/\mu_f$ (ratio of inertial and viscous forces in the fluid) and the Froude number $\Fr=\overline{u}/\sqrt{\overline{x}g}$ (ratio of inertial and gravitational forces in the fluid). The Newtonian stress in dimensionless form\footnote{In the following $\mat{S}[.]$ always designates the Newtonian stress tensor of the corresponding solutions of the Navier-Stokes or the Stokes equations and $\mat{E}[.]$ denotes the associated deformation gradient tensor.} reads as $\mat{S}[\vec{u}]=-p\mat{I}+2\mat{E}[\vec{u}]$ with $\mat{E}[\vec{u}]=0.5(\nabla\vec{u}+\nabla\vec{u}^T)$. The size parameter enters the bijection between the reference and time-dependent state:
\begin{subequations}\label{eq_mathMod_trafoBoth}
\begin{align}
 \vec{x}(\vec{y},t,\epsilon)&=\epsilon\mat{R}(t)\cdot\vec{y}+\vec{c}(t),\label{eq_mathMod_trafo}\\
 \vec{y}(\vec{x},t,\epsilon)&=\epsilon^{-1}\mat{R}^T(t)\cdot\left(\vec{x}-\vec{c}(t)\right).\label{eq_mathMod_backTrafo}
\end{align}
\end{subequations}
The density ratio $\rho$ is now the key parameter that allows us to define different inertial regimes. 

\subsection{Tracer particles of different inertial type}\label{subsec_mathMod_inertia}

As we will show our asymptotic approach results in particles which follow the streamlines of the surrounding fluid. Hence, we refer to them as tracer particles. To distinguish between different types of inertial particles, we introduce an $\epsilon$-dependent mass function $\alpha_{mass}(\epsilon)$. With the particle density $\rho_p=\rho_p(\epsilon)=\alpha_{mass}(\epsilon)m_p/V_p$, the density ratio and the size parameter are then related according to
\begin{align}
 \rho=\frac{\rho_p}{\rho_f}=\alpha_{mass}(\epsilon)\frac{m_pV_f}{m_fV_p}=\alpha_{mass}(\epsilon)\frac{m_p\overline{x}^3}{m_f|\aset{E}|\overline{y}^3}=\frac{\alpha_{mass}(\epsilon)}{\epsilon^3}\frac{m}{|\aset{E}|}.\label{eq_mathMod_massfcn}
\end{align}
Here $m_p,V_p$ and $m_f,V_f$ are the mass and the volume associated with the particle and the fluid, respectively, $m=m_p/m_f$. The freely selectable mass function has no physical meaning, but allows to balance the inertial terms in the particle momentum balance in different ways, yielding several models of inertial particles. In particular, we set $\alpha_{mass}(\epsilon)=\epsilon^k$, $k\in\N$, and define three inertial regimes with respect to the behavior of the density ratio in the limit $\epsilon\to0$, i.e.\ heavy ($k=2$), normal ($k=3$) and light weighted ($k>3$) tracer particles, see Table~\ref{tab_mathMod_inertReg}. For $k=3$, the density ratio satisfies $\rho\equiv const$ leading to the inertia-free particle model that was investigated in \cite{Junk2007}. The cases $k\leq1$ are not covered by the asymptotic simplification as we will comment on in Remark~\ref{rem_asymAn_impactOfScaling}.
\begin{table}
\begin{tabular}{|c|l|l|}
\hline
 $\alpha_{mass}=\epsilon^k$ & Name of type & Behavior of density ratio\\\hline
 $k>3$&light weighted tracer particles&$\rho\to0$\\
 $k=3$&normal tracer particles&$\rho\equiv const$\\
 $k=2$&heavy tracer particles&$\rho\to\infty$\\\hline
\end{tabular}
\caption{Classification of inertial types by means of the mass function, cf.\ \eqref{eq_mathMod_massfcn}.}\label{tab_mathMod_inertReg}
\end{table}

\subsection{Asymptotic analysis}\label{sec_2_asymAn}

In this subsection we generalize the asymptotic approach of \cite{Junk2007} to the introduced inertial type classification. We particularly point out the differences in the asymptotic analysis that arise for the heavy tracer particles.

\subsubsection*{Asymptotic expansion}
We assume that under the considered inertia types the particle influences the carrier flow locally but not globally. This way we follow \cite{Junk2007} and make the following assumptions.
\begin{assumption}[Particle-fluid interaction]\label{assum_mathMod_assumptions1}
\hfill
 \begin{itemize}
 \item[1)] The fluid is essentially undisturbed by the particle.
 \item[2)] The fluid motion induces a rotation of the particle of $\gO(1)$.
 \item[3)] The particle is far away from the boundary of the fluid domain $\partial\Omega$.
\end{itemize}
\end{assumption}\noindent
For the quantities of the particle we take a regular expansion in powers of $\epsilon$, while the fields of the fluid flow are separated in a global and a local part: 
\begin{align*}
 \vec{a}=\sum_{i=0}^{\infty}\epsilon^i\vec{a}_i\quad \text{for } \vec{a}\in\{\vec{c},\vec{v},\vec{\omega},\mat{R}\},\qquad \vec{u}=\vec{u}_0+\mat{R}\cdot\vec{u}_{loc},\qquad p=p_0+p_{loc}. 
\end{align*}
The local fields of the fluid quantities are expressed in the local coordinates of the particle reference state,
\begin{align*}
 \vec{u}_{loc}(\vec{x},t,\epsilon)=\sum_{i=1}^{\infty}\epsilon^i\vec{u}_{loc,i}(\vec{y}(\vec{x},t,\epsilon),t),\qquad p_{loc}(\vec{x},t,\epsilon)=\sum_{i=1}^{\infty}\epsilon^{i-1}p_{loc,i}(\vec{y}(\vec{x},t,\epsilon),t).
\end{align*}
The scaling is chosen in such a way that the gradient of the pressure balances the Laplacian of the velocity. It is a consequence of the fact that the bijective mapping between the reference and time-dependent state is $\epsilon$-dependent, see~\eqref{eq_mathMod_backTrafo}. (This way the chain rule generates a factor of $\epsilon^{-1}$ every time $\vec{u}_{loc}$ is being differentiated with respect to $\vec{x}$ or $t$.)

\subsubsection*{Asymptotic solution}
Our goal is to formulate a solution of the complete system \eqref{eq_mathMod_dimFreeSystem} up to an error of $\gO(\epsilon)$. Therefore, we consider only a finite number of asymptotic coefficients in the expansions. To address them we set
\begin{subequations}\label{eq_asymAn_expanAll}
 \begin{align}
 &\vec{a}^e=\sum_{i=0}^2\epsilon^i\vec{a}_i\quad \text{for } \vec{a}\in\{\vec{c},\vec{v}\},\qquad \vec{a}^e=\sum_{i=0}^1\epsilon^i\vec{a}_i\quad  \text{for }  \vec{a}\in\{\vec{\omega},\mat{R}\},\label{eq_asymAn_expanOth}\\
 &\vec{u}^e=\vec{u}_0+\mat{R}^e\sum_{i=1}^2\epsilon^{i}\vec{u}_{loc,i},\qquad\hspace*{4.2ex} p^e=p_0+\sum_{i=1}^2\epsilon^{i-1}p_{loc,i}.\label{eq_asymAn_expanFlow}
\end{align}
\end{subequations}
Inserting the expansion coefficients into~\eqref{eq_mathMod_trafoBoth} yields the following form of the bijective mapping between the reference and the time-dependent state:
\begin{align*}
 \vec{x}^e(\vec{y},t,\epsilon)&=\epsilon\mat{R}^e(t,\epsilon)\cdot\vec{y}+\vec{c}^e(t,\epsilon),\qquad \vec{y}^e(\vec{x},t,\epsilon)=\epsilon^{-1}\left(\mat{R}^e(t,\epsilon)\right)^{-1}\cdot(\vec{x}-\vec{c}^e(t,\epsilon)).
\end{align*}
It is shown in (\cite{Junk2007}, Lemma~5) that $\left(\mat{R}^e\right)^{-1}=\left(\mat{R}^e\right)^T+\gO(\epsilon^2)$ and $\|\left(\mat{R}^e\right)^{-1}\|=\gO(1)$, thus $\vec{y}^e$ is well-defined. The asymptotic coefficients of the mapping are particularly given by
\begin{align*}
\vec{x}^e(\vec{y},t,\epsilon)=\sum_{i=0}^2\epsilon^i\vec{x}_i(\vec{y},t),\qquad \text{with } \vec{x}_0=\vec{c}_0,\quad \vec{x}_1=\mat{R}_0\cdot\vec{y}+\vec{c}_1,\quad \vec{x}_2=\mat{R}_1\cdot\vec{y}+\vec{c}_2.
\end{align*}
Moreover, we use Taylor expansions for functions of the form $f(\vec{x}(\vec{y},t,\epsilon))$ in $\vec{c}_0$ in the following, i.e.\
\begin{align*}
 f(\vec{x})&=f(\vec{c}_0)+\partial_{\vec{x}}f(\vec{c}_0)\cdot\left(\epsilon\vec{x}_1+\epsilon^2\vec{x}_2\right)+0.5\partial_{\vec{x}\vec{x}}f(\vec{c}_0):\left(\epsilon\vec{x}_1+\epsilon^2\vec{x}_2\right)\otimes\left(\epsilon\vec{x}_1+\epsilon^2\vec{x}_2\right)+\gO(\epsilon^3)\\
 &=\sum_{i=0}^2\epsilon^i\diffD_if(\vec{c}_0)+\gO(\epsilon^3),
\end{align*}
with the differential operators
\begin{align*}
 \diffD_0=1,\qquad \diffD_1=\vec{x}_1\cdot\nabla,\qquad \diffD_2=\vec{x}_2\cdot\nabla+0.5\vec{x}_1\otimes\vec{x}_1:\nabla^2.
\end{align*}

To emphasize the general structure of the asymptotic result in the subsequent Lemma~\ref{lem_asymAna_junk} and to stress the relevance of certain terms for the suspension model in Section~\ref{sec_3_kinMod} we introduce abbreviations for some expressions.

\begin{abbreviation}\label{abb_asymAn_1} Consider the following model-relevant functions for $\alpha_{mass}(\epsilon)=\epsilon^k$, $k\geq 2$
\begin{align*}
  \vec{h}_1&=\mat{R}_0^T\cdot\left(\vec{v}_1+B(\vec{\omega}_0)\cdot\mat{R}_0\cdot\vec{y}-\diffD_1\vec{u}_0\right),\\
 \vec{h}_2&=\mat{R}_0^T\cdot\left(\vec{v}_2+\left(B(\vec{\omega}_0)\cdot\mat{R}_1+B(\vec{\omega}_1)\cdot\mat{R}_0\right)\cdot\vec{y}-\diffD_2\vec{u}_0-\mat{R}_1\cdot\vec{u}_{loc,1}\right),\\
  \vec{f}_1&=\begin{cases} 
                  \vec{0},&k\geq3\\
                  \vec{k}_0,&k=2
                 \end{cases},\\
 \vec{f}_2&=-\mat{R}_0^T\cdot\mat{R}_1\cdot\vec{f}_1-|\aset{E}|\mat{R}_0^T\cdot \nabla_{\vec{x}}\cdot\mat{S}[\vec{u}_0]^T+
		\begin{cases}
                 \vec{0},&k\geq4\\
                 \vec{k}_0,&k=3\\
                 \vec{k}_1,&k=2
                \end{cases},\\
 \vec{g}_1&=\vec{0},\quad k\geq2,\\
 \vec{g}_2&=-\mat{R}_0^T\cdot\mat{R}_1\cdot\vec{g}_1-\int_{\partial\aset{E}}\vec{y}\times\left(\mat{R}_0^T\cdot \diffD_1\mat{S}[\vec{u}_0]\cdot\mat{R}_0\cdot\vec{n}\right)\,\diffd s
		+\begin{cases}
		  \vec{0},&k\geq3\\
		  \vec{\ell}_0,&k=2
		 \end{cases},
\end{align*}
where the derivatives of $\vec{u}_0$ and the Newtonian stresses are evaluated in $\vec{c}_0$. Moreover, abbreviate the linear accelerations by
\begin{align*}
 \vec{k}_0=m\Re\mat{R}_0^T\cdot\left(\frac{\diffd}{\diffd t}\vec{v}_0-\frac{1}{\Fr^2}\vec{e}_g\right),\qquad\vec{k}_1=m\Re\mat{R}_0^T\cdot\frac{\diffd}{\diffd t}\vec{v}_1,
\end{align*}
and the angular accelerations by
\begin{align*}
 \vec{\ell}_0&=m\Re\mat{R}_0^T\cdot\frac{\diffd}{\diffd t}\left(\mat{R}_0\cdot\hat{\mat{J}}\cdot\mat{R}_0^T\cdot\vec{\omega}_0\right),\\
 \vec{\ell}_1&=m\Re\mat{R}_0^T\cdot\frac{\diffd}{\diffd t}\left(\mat{R}_0\cdot\hat{\mat{J}}\cdot\mat{R}_0^T\cdot\vec{\omega}_1+\left(\mat{R}_0\cdot\hat{\mat{J}}\cdot\mat{R}_1^T+\mat{R}_1\cdot\hat{\mat{J}}\cdot\mat{R}_0^T\right)\cdot\vec{\omega}_0\right).
 \end{align*}
\end{abbreviation}

\begin{lem}[Asymptotic one-particle model]\label{lem_asymAna_junk}
Let the following four requirements be fulfilled, then $\vec{u}^e$, $p^e$, $\vec{c}^e$, $\vec{v}^e$, $\vec{\omega}^e$ and $\mat{R}^e$ defined in~\eqref{eq_asymAn_expanAll} are a solution of the complete system \eqref{eq_mathMod_dimFreeSystem} up to an order of $\gO(\epsilon)$ for $\epsilon\downarrow0$.
\begin{itemize}
 \item[(R1)] Let the global flow velocity $\vec{u}_0$ and pressure $p_0$ be the solutions of the incompressible Navier-Stokes equations in $\Omega\times\R^+$
 \begin{align}
  \Re\left(\partial_t\vec{u}_0+(\vec{u}_0\cdot\nabla_{\vec{x}})\vec{u}_0\right)=\nabla_{\vec{x}}\cdot\mat{S}[\vec{u}_0]^T+\Re\Fr^{-2}\vec{e}_g,\qquad \nabla_{\vec{x}}\cdot\vec{u}_0=0\label{eq_asymAn_navStokesMomentum}
 \end{align}
with the boundary condition $\vec{u}_0=\vec{u}$ on $\partial\Omega$, and the initial condition $\vec{u}_0(\vec{x},0)=\vec{u}(\vec{x},0)$ for $\vec{x}\in\Omega\setminus\aset{E}(0)$, and $\vec{u}_0(\vec{x},0)=\vec{\omega}(0)\times(\vec{x}-\vec{c}(0))+\vec{v}(0)$ for $\vec{x}\in\aset{E}(0)$. 
\item[(R2)] Let the particle related coefficients satisfy the following set of conditions: Let
\begin{subequations}\label{eq_asymAn_particleEqs}
\begin{align}
 \frac{\diffd}{\diffd t}\vec{c}_i(t)&=\vec{v}_i(t),\quad i=0,1,2\label{eq_asymAn_partVi}
\end{align}
with the initial conditions $\vec{c}_0(0)=\vec{c}(0)$, $\vec{c}_i(0)=\vec{0}$, $i=1,2$ and let the zero-order velocity suffice the so-called `tracer condition'
\begin{align}
 \vec{v}_0(t)=\vec{u}_0(\vec{c}_0(t),t).\label{eq_asymAn_tracerCond}
\end{align}
Let additionally the matrices $\mat{R}_0$, $\mat{R}_1$ solve the differential equations
\begin{align}
 \frac{\diffd}{\diffd t}\mat{R}_0=B(\vec{\omega}_0)\cdot\mat{R}_0,\qquad\frac{\diffd}{\diffd t}\mat{R}_1=B(\vec{\omega}_1)\cdot\mat{R}_0+B(\vec{\omega}_0)\cdot\mat{R}_1\label{eq_asymAn_SOode}
\end{align}
\end{subequations}
with the initial conditions $\mat{R}_0(0)=\mat{R}(0)$ and $\mat{R}_1(0)=\mat{0}$.
\item[(R3)] For every $t>0$ let the local fields solve the stationary Stokes equations on the unbounded exterior of $\aset{E}$
\begin{subequations}\label{eq_asymAn_statStokesComplete}
\begin{align}
 \nabla_{\vec{y}}p_{loc,i}(\vec{y},t)=\Delta_{\vec{y}}\vec{u}_{loc,i}(\vec{y},t),\qquad \nabla_{\vec{y}}\cdot\vec{u}_{loc,i}=0\label{eq_asymAn_statStokesBasic}
\end{align}
with the Dirichlet and integral conditions
\begin{align}
 \vec{u}_{loc,i}(\vec{y},t)&=\vec{h}_i(\vec{y},t),\quad\vec{y}\in\partial\aset{E},\label{eq_asymAn_dirichlet}\\
 \int_{\partial\aset{E}}\mat{S}[\vec{u}_{loc,i}]\cdot\vec{n}\,\diffd s(\vec{y})&=\vec{f}_i,\label{eq_asymAn_force}\\
 \int_{\partial\aset{E}}\vec{y}\times\left(\mat{S}[\vec{u}_{loc,i}]\cdot\vec{n}\right)\,\diffd s(\vec{y})&=\vec{g}_i,\label{eq_asymAn_couple}
\end{align}
for $i=1,2$ and the functions $\vec{h}_i$, $\vec{f}_i$, $\vec{g}_i$ given in Abbreviation~\ref{abb_asymAn_1}. Additionally let $\vec{u}_{loc,i}$ and $p_{loc,i}$ fulfill the following decay properties:
\begin{align}
 \|\vec{u}_{loc,i}\|\leq c_i/\|\vec{y}\|,\qquad \|\nabla_{\vec{y}}\vec{u}_{loc,i}\|,|p_{loc,i}|\leq d_i/\|\vec{y}\|^2,\qquad\text{for } \|\vec{y}\|\geq f_i>0\label{eq_asymAn_decay}
\end{align}
\end{subequations}
for some $c_i$, $d_i$, $f_i$, $i=1,2$ independent of $\vec{y}$.
\item[(R4)] Let the mass function \eqref{eq_mathMod_massfcn} be given as $\alpha_{mass}(\epsilon)=\epsilon^k$ with $k\geq2$.
\end{itemize}
\end{lem}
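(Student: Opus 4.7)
The plan is to substitute the truncated expansions \eqref{eq_asymAn_expanAll} directly into each of the five equations of \eqref{eq_mathMod_dimFreeSystem}, collect like powers of $\epsilon$, and verify that the residual of every equation is $\gO(\epsilon)$ once (R1)--(R4) hold. The decisive feature is that the change of variables $\vec{y}^e=\epsilon^{-1}(\mat{R}^e)^{-1}\cdot(\vec{x}-\vec{c}^e)$ carries an explicit $\epsilon^{-1}$, so each application of $\nabla_{\vec{x}}$ or $\partial_t$ to a function of $\vec{y}^e$ promotes the local term's $\epsilon$-order by one. Combined with the chosen scalings $\vec{u}_{loc}=\sum\epsilon^i\vec{u}_{loc,i}$ and $p_{loc}=\sum\epsilon^{i-1}p_{loc,i}$, this reduces the local linear balance $\nabla p-\Delta\vec{u}=\cdots$ to a stationary Stokes equation at leading order, while the local nonlinear convective and time-derivative contributions lie in the error thanks to the decay \eqref{eq_asymAn_decay}.

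For \eqref{eq_mathMod_navSt} and the incompressibility constraint, the global parts vanish exactly by (R1), whereas the local parts at the two lowest orders collapse, after factoring out the $\epsilon^{-1}$ produced by the chain rule and left-multiplying by $\mat{R}_0^T$, into the Stokes systems of (R3). The no-slip condition \eqref{eq_mathMod_noslip} is matched by Taylor-expanding $\vec{u}_0$ around $\vec{c}_0$ at surface points, writing $\vec{x}^e=\vec{c}_0+\epsilon\vec{x}_1+\epsilon^2\vec{x}_2+\gO(\epsilon^3)$ and equating coefficients in
\begin{align*}
\vec{u}_0(\vec{x}^e)+\mat{R}^e\cdot\sum_{i=1}^2\epsilon^i\vec{u}_{loc,i}=\vec{v}^e+\vec{\omega}^e\times\epsilon\mat{R}^e\cdot\vec{y}.
\end{align*}
Order $\epsilon^0$ returns the tracer condition \eqref{eq_asymAn_tracerCond}; orders $\epsilon^1$ and $\epsilon^2$, after left-multiplication by $\mat{R}_0^T$, produce exactly the boundary data $\vec{h}_1,\vec{h}_2$ of Abbreviation~\ref{abb_asymAn_1}. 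The kinematic equations \eqref{eq_mathMod_kin} yield \eqref{eq_asymAn_partVi} and \eqref{eq_asymAn_SOode} immediately.

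The linear and angular momentum balances \eqref{eq_mathMod_linMom}--\eqref{eq_mathMod_angMom} are where inertia enters the hierarchy. Using \eqref{eq_mathMod_massfcn} with $\alpha_{mass}(\epsilon)=\epsilon^k$ gives $\rho=\epsilon^{k-3}m/|\aset{E}|$, so the LHS prefactors scale as $\epsilon^{k-2}m\Re$ and $\epsilon^{k-1}m\Re$ respectively. On the RHS one expands the surface stress by Taylor-expanding $\mat{S}[\vec{u}_0]$ around $\vec{c}_0$ and accounting for the local contributions via the transformation behaviour under $\mat{R}_0$; the divergence theorem then identifies the first-order Taylor piece with the volume term $|\aset{E}|\nabla_{\vec{x}}\cdot\mat{S}[\vec{u}_0]^T$ appearing in $\vec{f}_2$ (and an analogous cross-product term in $\vec{g}_2$). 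Matching LHS and RHS at orders $\epsilon^0$ and $\epsilon^1$ delivers the prescribed constants $\vec{f}_i,\vec{g}_i$ and exhibits the $k$-dependent case split: for $k\geq 4$ all inertial accelerations sit in the error, for $k=3$ the linear acceleration $\vec{k}_0$ survives in $\vec{f}_2$, and for $k=2$ both $\vec{k}_0$ and the angular acceleration $\vec{\ell}_0$ already enter at the lowest order, producing $\vec{f}_1=\vec{k}_0$ and the $\vec{\ell}_0$-addition in $\vec{g}_2$.

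The main obstacle is the order-by-order bookkeeping in the momentum integrals: one must keep careful track of which terms in $\mat{S}[\vec{u}_0]$ and in the local contributions land at which order of $\epsilon$, and of how the $k$-dependent LHS prefactor redistributes the inertia. A secondary point is a consistency check --- the force/couple values produced by matching the momentum balances must coincide with the $\vec{f}_i,\vec{g}_i$ prescribed in (R3), and this must hold simultaneously with the Dirichlet data $\vec{h}_i$ depending on the same particle unknowns that the momentum equations determine. Finally, well-definedness of $\vec{y}^e$ together with $(\mat{R}^e)^{-1}=(\mat{R}^e)^T+\gO(\epsilon^2)$, supplied by Junk \& Illner's Lemma~5, ensures that truncating $\mat{R}^e$ at two coefficients does not pollute the $\gO(\epsilon)$ accuracy.
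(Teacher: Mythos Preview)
Your proposal is correct and follows essentially the same route as the paper's own proof: insert the truncated expansions into each equation of \eqref{eq_mathMod_dimFreeSystem}, Taylor-expand the global fields about $\vec{c}_0$, use the chain rule with its $\epsilon^{-1}$ factor to reduce the local part of the fluid momentum to the Stokes problems of (R3), read off the tracer condition and the Dirichlet data $\vec{h}_i$ from the no-slip relation, and match the $k$-dependent inertial prefactors $\epsilon^{k-2}$, $\epsilon^{k-1}$ in the particle momentum balances against the surface-stress integrals to obtain the $\vec{f}_i,\vec{g}_i$ of Abbreviation~\ref{abb_asymAn_1}. The paper proceeds identically, merely recording the resulting conditions rather than spelling out the divergence-theorem identification of $\int_{\partial\aset{E}}\diffD_1\mat{S}[\vec{u}_0]\cdot\mat{R}_0\cdot\vec{n}\,\diffd s$ with $|\aset{E}|\nabla_{\vec{x}}\cdot\mat{S}[\vec{u}_0]^T$ that you mention.
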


\begin{proof}
The proof results from a straight forward computation where the asymptotic coefficients are inserted into the full system~\eqref{eq_mathMod_dimFreeSystem} and Taylor expansions are applied for the global flow fields, whenever they are evaluated at the particle boundary (see \cite{Junk2007} for normal tracer particles). The introduction of the different inertial types does not change the general mathematical structure of the problem but only affects the linear and angular momentum balances of the particle, i.e.\ the functions $\vec{f}_i$, $\vec{g}_i$ in (R3). We sketch the steps of the proof for the sake of completeness.

Inserting the asymptotic expansions for $\vec{c}$, $\vec{v}$, $\vec{\omega}$ and $\vec{R}$ in \eqref{eq_mathMod_kin} and applying \eqref{eq_asymAn_partVi}, \eqref{eq_asymAn_SOode} shows that the particle kinematics \eqref{eq_mathMod_kin} is fulfilled up to an order of $\gO(\epsilon^2)$.

The conservation of mass in \eqref{eq_mathMod_navSt} is fulfilled exactly since each local velocity field is assumed to be divergence-free.

Considering the no-slip equation~\eqref{eq_mathMod_noslip},  the use of the asymptotic expansions and the Taylor series of $\vec{u}_0$ in $\vec{c}_0$ results in the following conditions that are exactly the Dirichlet conditions of $\vec{u}_{loc,i}$~\eqref{eq_asymAn_dirichlet} and the tracer condition~\eqref{eq_asymAn_tracerCond},
\begin{align*}
  \vec{u}_0(\vec{c}_0(t),t)&=\vec{v}_0,\\
  \diffD_1\vec{u}_0(\vec{c}_0(t),t)+\mat{R}_0\cdot\vec{u}_{loc,1}(\vec{y},t)&=\vec{v}_1+B(\vec{\omega}_0)\cdot\mat{R}_0\cdot\vec{y},\\
  \diffD_2\vec{u}_0(\vec{c}_0(t),t)+\mat{R}_1\cdot\vec{u}_{loc,1}(\vec{y},t)+\mat{R}_0\cdot\vec{u}_{loc,2}(\vec{y},t)&=\vec{v}_2+(B(\vec{\omega}_0)\cdot\mat{R}_1+B(\vec{\omega}_1)\cdot\mat{R}_0)\cdot\vec{y}.
 \end{align*}

Inserting the asymptotic expansion in the momentum equation of the fluid and applying the chain rule for the local fields yields
\begin{align*}
\Re(\partial_t\vec{u}_0+(\vec{u}_0\cdot\nabla_{\vec{x}})\vec{u}_0)-\nabla_{\vec{x}}\cdot\mat{S}[\vec{u}_0]^T-\Re\Fr^{-2}\vec{e}_g=\mat{R}^{e}\cdot\nabla_{\vec{y}}\cdot\left(\epsilon^{-1}\mat{S}[\vec{u}_{loc,1}]^T+\mat{S}[\vec{u}_{loc,2}]^T\right)+\gO(\epsilon).
\end{align*}
Since $\vec{u}_0$ satisfies the Navier-Stokes equations and the local fields satisfy the stationary Stokes equations according to (R1) and (R3), the momentum balance holds up to an error of $\gO(\epsilon)$.

As for the momentum balances of the particle, we insert the asymptotic expansion on both sides of the equations. On the right-hand side we use a Taylor series for the term $\mat{S}[\vec{u}_0]$ in $\vec{c}_0$ and keep the integrals for the local fields, this leads to
\begin{subequations}\label{eq_asymAn}
\begin{align}
 &m\Re\epsilon^{k-2}\mat{R}_0^T\cdot\left(\frac{\diffd}{\diffd t}\vec{v}_0-\frac{1}{\Fr^2}\vec{e}_g+\epsilon\frac{\diffd}{\diffd t}\vec{v}_1\right)\nonumber\\
 &\qquad=\int_{\partial\aset{E}}\mat{S}[\vec{u}_{loc,1}]\cdot\vec{n}\,\diffd s+\epsilon\int_{\partial\aset{E}}\left(\mat{R}_0^T\cdot \diffD_1\mat{S}[\vec{u}_0]\cdot\mat{R}_0+\mat{R}_0^T\cdot\mat{R}_1\cdot\mat{S}[\vec{u}_{loc,1}]+\mat{S}[\vec{u}_{loc,2}]\right)\cdot\vec{n}\,\diffd s\label{eq_asymAn_intCond1}
\end{align}
for the linear momentum and 
\begin{align}
  &m\Re\epsilon^{k-1}\mat{R}_0^T\cdot\left(\frac{\diffd}{\diffd t}\left(\mat{R}_0\cdot\hat{\mat{J}}\cdot\mat{R}_0^T\cdot\vec{\omega}_0\right)+\epsilon \frac{\diffd}{\diffd t}\left(\mat{R}_0\cdot\hat{\mat{J}}\cdot\mat{R}_0^T\cdot\vec{\omega}_1+\left(\mat{R}_0\cdot\hat{\mat{J}}\cdot\mat{R}_1^T+\mat{R}_1\cdot\hat{\mat{J}}\cdot\mat{R}_0^T\right)\cdot\vec{\omega}_0\right)\right)\nonumber\\
  &=~\int_{\partial\aset{E}}\vec{y}\times\mat{S}[\vec{u}_{loc,1}]\cdot\vec{n}\,\diffd s\nonumber\\
  &\quad+\epsilon\left(\int_{\partial\aset{E}}\vec{y}\times\left(\mat{R}_0^T\cdot \diffD_1\mat{S}[\vec{u}_0]\cdot\mat{R}_0+\mat{S}[\vec{u}_{loc,2}]\right)\cdot\vec{n}\,\diffd s+\mat{R}_0^T\cdot\mat{R}_1\cdot\int_{\partial\aset{E}}\vec{y}\times\left(\mat{S}[\vec{u}_{loc,1}]\cdot\vec{n}\right)\,\diffd s\right)\label{eq_asymAn_intCond2}
\end{align}
\end{subequations}
for the angular momentum. These are exactly the integral conditions~\eqref{eq_asymAn_force} and~\eqref{eq_asymAn_couple}.

The requirement (R4) is needed to avoid contradictory conditions as we explain in Remark~\ref{rem_asymAn_impactOfScaling}.
\end{proof}

In Lemma~\ref{lem_asymAna_junk} we assume that (R3) is fulfilled despite the fact that the problem seems to be overdetermined at first glance by the presence of Dirichlet as well as integral conditions on $\partial\aset{E}$. This is however not true as the following lemma shows.

\begin{lem}[Solvability conditions of Stokes problem]\label{lem_asymAna_solvCond}
Let $\vec{w}_q$, $p_q$, $q=1,\ldots,6$, be the solutions of the following six stationary Stokes problems related to the boundary conditions of the three elementary translations and rotations:
\begin{align}\label{eq_asymAn_momentProblem}\nonumber
 \nabla_{\vec{y}}\cdot\mat{S}[\vec{w}_q]^T&=\vec{0},\qquad \nabla_{\vec{y}}\cdot\vec{w}_q=0,\hspace*{6ex} q=1,\ldots,6,\quad\vec{y}\in\R^3\setminus\overline{\aset{E}},\\\nonumber
 \vec{w}_q&=\vec{e}_q,\qquad \vec{w}_{q+3}=\vec{y}\times\vec{e}_q,\quad q=1,2,3,\hspace*{4.2ex}\vec{y}\in\partial\aset{E},\\
 \|\vec{w}_{q}\|&\leq c_q/\|\vec{y}\|,\qquad \|\nabla_{\vec{y}}\vec{w}_{q}\|,\|p_q\|\leq d_q/\|\vec{y}\|^2,\qquad\|\vec{y}\|\geq f_q>0
\end{align}
for constants $c_q$, $d_q$, $f_q$. Set the following surface moments associated with $\vec{w}_q$, $q=1,\ldots,6$ to be 
\begin{align*}
 \vec{s}_q&=\int_{\partial\aset{E}}\vec{y}\times\left(\mat{S}[\vec{w}_q]\cdot\vec{n}\right)\,\diffd s(\vec{y}),\qquad \vec{t}_q=\int_{\partial\aset{E}}\left(\mat{S}[\vec{w}_q]\cdot\vec{n}\right)\,\diffd s(\vec{y}),\\
  \mat{V}_q&=\int_{\partial\aset{E}}\vec{y}\otimes\left(\mat{S}[\vec{w}_q]\cdot\vec{n}\right)\,\diffd s(\vec{y}),\qquad W_q=\int_{\partial\aset{E}}\vec{y}\otimes\vec{y}\otimes\left(\mat{S}[\vec{w}_q]\cdot\vec{n}\right)\,\diffd s(\vec{y}).
\end{align*}
\begin{itemize}
 \item[1)] Let (R3) of Lemma~\ref{lem_asymAna_junk} be fulfilled, then the following solvability conditions hold:
\begin{subequations}\label{eq_asymAn_solvCondAll}
 \begin{align}
 \begin{pmatrix}\vec{s}_q\\\vec{t}_q\end{pmatrix}\cdot\begin{pmatrix}\mat{R}_0^T\cdot\vec{\omega}_{0}\\\mat{R}_0^T\cdot\vec{v}_1\end{pmatrix}&=\left(\mat{R}_0^T\cdot\partial_{\vec{x}}\vec{u}_0^T\cdot\mat{R}_0\right):\mat{V}_q+\left(\mat{R}_0^T\cdot\partial_{\vec{x}}\vec{u}_0\cdot\vec{c}_1\right)\cdot\vec{t}_q+\begin{cases}\vec{e}_q\cdot\vec{f}_1&q=1,2,3\\-\vec{e}_{q-3}\cdot\vec{g}_1&q=4,5,6\end{cases},\label{eq_asymAn_solvCond1}\\
 \begin{pmatrix}\vec{s}_q\\\vec{t}_q\end{pmatrix}\cdot\begin{pmatrix}\mat{R}_0^T\cdot\vec{\omega}_{1}\\\mat{R}_0^T\cdot\vec{v}_2\end{pmatrix}&=\left(\mat{R}_1^T\cdot\left( B(\vec{\omega}_0)+\partial_{\vec{x}}\vec{u}_0^T\right)\cdot\mat{R}_0-\mat{R}_0^T\cdot\left(B(\vec{\omega}_0)+\partial_{\vec{x}}\vec{u}_0^T\right)\cdot\mat{R}_0\cdot\mat{R}_1^T\cdot\mat{R}_0+\mat{L}\right):\mat{V}_q\nonumber\\
 &\quad+\left(\mat{R}_0^T\cdot\left(\mat{R}_1\cdot\mat{R}_0^T\cdot\left(\vec{v}_1-\partial_{\vec{x}}\vec{u}_0\cdot\vec{c}_1\right)+0.5[\vec{c}_1\otimes\vec{c}_1:\nabla^2]\vec{u}_0+\partial_{\vec{x}}\vec{u}_0\cdot\vec{c}_2\right)\right)\cdot\vec{t}_q\nonumber\\
 &\quad+\sum_{k,\ell,m}K_{k\ell m}(W_q)_{k\ell m}+\begin{cases}\vec{e}_q\cdot\vec{f}_2&q=1,2,3\\-\vec{e}_{q-3}\cdot\vec{g}_2&q=4,5,6\end{cases}.\label{eq_asymAn_solvCond2}
\end{align}
\end{subequations}
with $(\mat{R}_0)_{ij}=\vec{e}_i\cdot\vec{p}_j$, $(\mat{L})_{ij}=[\vec{p}_{i}\otimes\vec{c}_1:\nabla^2](\vec{u}_0\cdot\vec{p}_{j})$ and $K_{k\ell m}=0.5[\vec{p}_k\otimes\vec{p}_\ell:\nabla^2](\vec{u}_0\cdot\vec{p}_m)$.
 \item[2)] Let \eqref{eq_asymAn_solvCondAll} hold. Let $(\vec{u}_{loc,i}, p_{loc,i})$ be the solution of the stationary Stokes problem \eqref{eq_asymAn_statStokesBasic} with the Dirichlet condition \eqref{eq_asymAn_dirichlet} and with the decay property \eqref{eq_asymAn_decay}, then the integral conditions \eqref{eq_asymAn_force} and \eqref{eq_asymAn_couple} are fulfilled.
 \item[3)] Let \eqref{eq_asymAn_solvCondAll} hold. Let $(\vec{u}_{loc,i}, p_{loc,i})$ be the solution of the stationary Stokes problem \eqref{eq_asymAn_statStokesBasic} with the integral conditions \eqref{eq_asymAn_force} and \eqref{eq_asymAn_couple} and with the decay property \eqref{eq_asymAn_decay}, then the Dirichlet condition \eqref{eq_asymAn_dirichlet} is fulfilled.
 \item[4)] The linear systems \eqref{eq_asymAn_solvCondAll} are invertible.
\end{itemize}
\end{lem}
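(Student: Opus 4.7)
My plan rests on the Lorentz reciprocal theorem for Stokes flows. The cornerstone identity
\begin{equation*}
\int_{\partial\aset{E}} \vec{w}_q \cdot (\mat{S}[\vec{u}_{loc,i}] \cdot \vec{n}) \, \diffd s = \int_{\partial\aset{E}} \vec{u}_{loc,i} \cdot (\mat{S}[\vec{w}_q] \cdot \vec{n}) \, \diffd s,\qquad q=1,\ldots,6,
\end{equation*}
follows from integrating the pointwise relation $\nabla_{\vec{y}}\cdot(\mat{S}[\vec{u}_{loc,i}]\cdot\vec{w}_q - \mat{S}[\vec{w}_q]\cdot\vec{u}_{loc,i})=0$ (valid for any pair of homogeneous Stokes solutions) over $\R^3\setminus\overline{\aset{E}}$. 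The decay estimates~\eqref{eq_asymAn_decay} and their analogues for $\vec{w}_q$ kill the surface integral at infinity. This single identity drives all four parts.

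For part 1) I substitute the data of $\vec{w}_q$ into the left-hand side: for $q=1,2,3$ the condition $\vec{w}_q=\vec{e}_q$ together with~\eqref{eq_asymAn_force} gives $\vec{e}_q\cdot\vec{f}_i$; for $q=4,5,6$ the condition $\vec{w}_q=\vec{y}\times\vec{e}_{q-3}$ combined with~\eqref{eq_asymAn_couple} and the scalar triple product identity $(\vec{y}\times\vec{e}_{q-3})\cdot\vec{a}=-\vec{e}_{q-3}\cdot(\vec{y}\times\vec{a})$ yields $-\vec{e}_{q-3}\cdot\vec{g}_i$. On the right-hand side I substitute $\vec{u}_{loc,i}=\vec{h}_i$ on $\partial\aset{E}$ and sort $\vec{h}_i$ by polynomial degree in $\vec{y}$. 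Constants pair with $\vec{t}_q$; linear terms of the form $\vec{\omega}\times\vec{y}$ pair with $\vec{s}_q$ (using $\mat{R}_0^T\cdot B(\vec{\omega})\cdot\mat{R}_0 = B(\mat{R}_0^T\cdot\vec{\omega})$ for $\mat{R}_0\in\SO(3)$); other linear terms $\mat{A}\cdot\vec{y}$ pair with $\mat{V}_q$; and the genuinely quadratic pieces of $\diffD_2\vec{u}_0 = \vec{x}_2\cdot\nabla\vec{u}_0 + 0.5\,\vec{x}_1\otimes\vec{x}_1:\nabla^2\vec{u}_0$ pair with $W_q$. Careful collection of terms, including the cross-terms generated by $\mat{R}_1\cdot\vec{y}$ inside $\vec{h}_2$ combined with $\partial_{\vec{x}}\vec{u}_0$, reproduces~\eqref{eq_asymAn_solvCondAll}.

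For part 2) I invoke classical well-posedness of the exterior stationary Stokes problem with smooth Dirichlet data and decay at infinity to obtain a unique $(\vec{u}_{loc,i},p_{loc,i})$. Running the reciprocal identity for this solution gives six scalar equations in which the integrals $\int_{\partial\aset{E}}\mat{S}[\vec{u}_{loc,i}]\cdot\vec{n}\,\diffd s$ and $\int_{\partial\aset{E}}\vec{y}\times(\mat{S}[\vec{u}_{loc,i}]\cdot\vec{n})\,\diffd s$ are the only unknowns and are matched component-wise against the prescribed $\vec{f}_i$ and $\vec{g}_i$ via the assumed solvability conditions; hence~\eqref{eq_asymAn_force} and~\eqref{eq_asymAn_couple} follow. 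Part 3) is the dual statement: I parametrize the Dirichlet data $\vec{h}_i$ by leaving the rigid-body pieces $(\vec{v}_i,\vec{\omega}_{i-1})$ as unknowns; via part 1), the integral conditions~\eqref{eq_asymAn_force}, \eqref{eq_asymAn_couple} become a linear system for these unknowns whose coefficient matrix is exactly the one appearing on the left-hand side of~\eqref{eq_asymAn_solvCondAll}. Part 4) then guarantees unique solvability and so the Dirichlet form is recovered.

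For part 4) I identify the $6\times 6$ matrix $\mat{M}$ with rows $(\vec{s}_q^T,\vec{t}_q^T)$, $q=1,\ldots,6$, as the classical grand resistance matrix for rigid-body motion of $\aset{E}$ in Stokes flow. Its symmetry follows by applying the reciprocal theorem to any pair $\vec{w}_q,\vec{w}_{q'}$. Positive definiteness follows from the companion identity
\begin{equation*}
\sum_{q,q'}\lambda_q\lambda_{q'}\mat{M}_{qq'}=2\int_{\R^3\setminus\overline{\aset{E}}}\mat{E}[\vec{w}]:\mat{E}[\vec{w}]\,\diffd\vec{y},\qquad\text{where}\quad \vec{w}=\sum_q\lambda_q\vec{w}_q,
\end{equation*}
obtained by integration by parts on the bilinear form of Stokes. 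The right-hand side vanishes only when $\mat{E}[\vec{w}]\equiv\mat{0}$, i.e.\ when $\vec{w}$ is a rigid motion; the decay at infinity then forces $\vec{w}\equiv\vec{0}$, which on $\partial\aset{E}$ reduces to a rigid motion $\vec{a}+\vec{b}\times\vec{y}$ that vanishes identically, hence $\vec{\lambda}=\vec{0}$. The main technical obstacle will be the $i=2$ case of part 1): $\vec{h}_2$ carries second-order rotation terms involving $\mat{R}_1$ as well as the quadratic Taylor contribution from $\diffD_2\vec{u}_0$, and sorting these correctly into the $\mat{L}$-piece, the triple contraction $K_{k\ell m}(W_q)_{k\ell m}$, and the explicit $\vec{f}_2$, $\vec{g}_2$ corrections requires meticulous bookkeeping.
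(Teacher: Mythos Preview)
Your approach via the Lorentz reciprocal theorem is exactly the paper's approach (there called the Green formula for Stokes solutions, cited from \cite{Junk2007}, Lemma~6), and parts 1) and 2) proceed essentially identically: rewrite the force and torque integrals in terms of $\vec{w}_q$, swap via reciprocity, and sort the resulting integrand $\vec{h}_i\cdot(\mat{S}[\vec{w}_q]\cdot\vec{n})$ by polynomial degree in $\vec{y}$ to produce the surface moments $\vec{t}_q$, $\vec{s}_q$, $\mat{V}_q$, $W_q$.

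Two places where you diverge from the paper are worth flagging. For part 3), the paper does not set up a linear system for the rigid-body parameters and invoke part 4); instead it combines the assumed integral conditions, written as $\int_{\partial\aset{E}}\vec{w}_q\cdot(\mat{S}[\vec{u}_{loc,i}]\cdot\vec{n})\,\diffd s=\ldots$, with the assumed solvability conditions, written as $\int_{\partial\aset{E}}\vec{h}_i\cdot(\mat{S}[\vec{w}_q]\cdot\vec{n})\,\diffd s=\ldots$, and then appeals once more to the Green formula to equate the boundary trace of $\vec{u}_{loc,i}$ with $\vec{h}_i$. Your formulation makes the role of the unknowns $(\vec{v}_i,\vec{\omega}_{i-1})$ and the invertibility from 4) more transparent; the paper's version is terser but relies on the same reciprocity mechanism. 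For part 4), the paper simply cites \cite{Junk2007}, Lemma~8, for the linear independence of the six vectors $(\vec{t}_q,\vec{s}_q)$, whereas you give a self-contained proof by recognizing the coefficient matrix as the grand resistance matrix and establishing symmetric positive definiteness through the Stokes energy identity. Your argument for 4) is more informative and stands on its own; the paper's is shorter but defers to the external reference.
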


\begin{proof}
The key ingredient for this proof is the existence of the Green formula for solutions of the Stokes problem (\cite{Junk2007}, Lemma~6), which connects the stresses and boundary conditions of two Stokes solutions and which needs the decay properties~\eqref{eq_asymAn_decay} as well as those in~\eqref{eq_asymAn_momentProblem}.

For 1): The equations~\eqref{eq_asymAn_force} and~\eqref{eq_asymAn_couple} are equivalent to
\begin{subequations}\label{eq_asymAn_gettingSC1}
\begin{align}
 \vec{e}_q\cdot\vec{f}_i&=\vec{e}_q\cdot\int_{\partial\aset{E}}\mat{S}[\vec{u}_{loc,i}]\cdot\vec{n}\,\diffd s=\int_{\partial\aset{E}}\vec{w}_q\cdot\left(\mat{S}[\vec{u}_{loc,i}]\cdot\vec{n}\right)\,\diffd s,\\\nonumber
 \vec{e}_q\cdot\vec{g}_i&=\vec{e}_q\cdot\int_{\partial\aset{E}}B(\vec{y})\cdot\mat{S}[\vec{u}_{loc,i}]\cdot\vec{n}\,\diffd s=\int_{\partial\aset{E}}\left(B(\vec{y})^T\cdot\vec{e}_q\right)\cdot\left(\mat{S}[\vec{u}_{loc,i}]\cdot\vec{n}\right)\,\diffd s\\
 &=-\int_{\partial\aset{E}}\left(\vec{y}\times\vec{e}_q\right)\cdot\left(\mat{S}[\vec{u}_{loc,i}]\cdot\vec{n}\right)\,\diffd s=-\int_{\partial\aset{E}}\vec{w}_{q+3}\cdot\left(\mat{S}[\vec{u}_{loc,i}]\cdot\vec{n}\right)\,\diffd s
\end{align}
\end{subequations}
for $q=1,2,3$. With the help of the Green formula the roles of $\vec{w}_q$ and $\vec{u}_{loc,i}$ in the right-hand sides of~\eqref{eq_asymAn_gettingSC1} can be interchanged. Thus \eqref{eq_asymAn_gettingSC1} is equivalent to
\begin{align}\label{eq_asymAn_gettingSC2}
 \vec{e}_q\cdot\vec{f}_i=\int_{\partial\aset{E}}\vec{h}_i\cdot\mat{S}[\vec{w}_q]\cdot\vec{n}\,\diffd s,\qquad-\vec{e}_q\cdot\vec{g}_i=\int_{\partial\aset{E}}\vec{h}_i\cdot\mat{S}[\vec{w}_{q+3}]\cdot\vec{n}\,\diffd s,\quad q=1,2,3.
\end{align}
We note that for $i=1,2$ the terms $\vec{h}_i$ of Abbreviation~\ref{abb_asymAn_1} have the form 
\begin{align*}
 \vec{h}_i=\mat{R}_0^T\cdot\vec{v}_i+\mat{R}_0^T\cdot B(\vec{\omega}_{i-1})\cdot\mat{R}_0\cdot\vec{y}-\vec{r}_i=\mat{R}_0^T\cdot\vec{v}_i+B(\mat{R}_0^T\cdot\vec{\omega}_{i-1})\cdot\vec{y}-\vec{r}_i,
\end{align*}
where we used $\mat{M}^T\cdot B(\vec{\omega})\cdot\mat{M}=B(\mat{M}^T\cdot\vec{\omega})$, which holds for any $\mat{M}\in\SO(3)$ and any $\vec{\omega}\in\R^3$, and where $\vec{r}_1=\mat{R}_0^T\cdot \diffD_1\vec{u}_0$ and $\vec{r}_2=\mat{R}_0^T\cdot\left(\mat{R}_1\cdot\vec{u}_{loc,1}+\diffD_2\vec{u}_0-B(\vec{\omega}_0)\cdot\mat{R}_1\cdot\vec{y}\right)$. Inserting $\vec{h}_i$ and sorting the resultant terms with respect to $\vec{\omega}_{i-1}$ and $\vec{v}_i$ leads to
\begin{align}
 \left(\mat{R}_0^T\cdot\vec{\omega}_{i-1}\right)\cdot\vec{s}_q+\left(\mat{R}_0^T\cdot\vec{v}_i\right)\cdot\vec{t}_q&=\int_{\partial\aset{E}} \vec{r}_i\cdot\mat{S}[\vec{w}_q]\cdot\vec{n}\,\diffd s(\vec{y})+\begin{cases}\vec{e}_q\cdot\vec{f}_i&q=1,2,3\\-\vec{e}_{q-3}\cdot\vec{g}_i&q=4,5,6\end{cases},\label{eq_asymAn_solvCond}
\end{align}
which directly results in~\eqref{eq_asymAn_solvCondAll}.

For 2) we consider~\eqref{eq_asymAn_solvCondAll} which is equivalent to~\eqref{eq_asymAn_solvCond} and thus to~\eqref{eq_asymAn_gettingSC2}, since $\vec{h}_i$ is the Dirichlet condition of $\vec{u}_{loc,i}$ by assumption. We use the Green formula and get~\eqref{eq_asymAn_gettingSC1}.

For 3) we analogously arrive at~\eqref{eq_asymAn_gettingSC2}. The assumption that the integral conditions are fulfilled is equivalent to~\eqref{eq_asymAn_gettingSC1}. Combining both results gives the equalities
\begin{align*}
 \int_{\partial\aset{E}}\vec{w}_q\cdot\left(\mat{S}[\vec{u}_{loc,i}]\cdot\vec{n}\right)\,\diffd s&=\int_{\partial\aset{E}}\vec{h}_i\cdot\mat{S}[\vec{w}_q]\cdot\vec{n}\,\diffd s,\\
 \int_{\partial\aset{E}}\vec{w}_{q+3}\cdot\left(\mat{S}[\vec{u}_{loc,i}]\cdot\vec{n}\right)\,\diffd s&=\int_{\partial\aset{E}}\vec{h}_i\cdot\mat{S}[\vec{w}_{q+3}]\cdot\vec{n}\,\diffd s.
\end{align*}
Since $\vec{u}_{loc,i}$ and $\vec{w}_q$ fulfill the decay property the above equations imply $\vec{u}_{loc,i}=\vec{h}_i$ on $\partial\aset{E}$ by means of the Green formula.

For 4): In (\cite{Junk2007}, Lemma~8) it is shown that the six vectors $(\vec{t}_q,\vec{s}_q)$, $q=1,\ldots,6$ are linearly independent, thus the system \eqref{eq_asymAn_solvCondAll} is invertible for all right-hand sides.
\end{proof}

\begin{remark}\hfill
\begin{itemize}
 \item[1)] 
 The solvability conditions~\eqref{eq_asymAn_solvCondAll} together with the particle related equations~\eqref{eq_asymAn_particleEqs} build a system of nonlinear ordinary differential equations (ODEs) for $\vec{c}_i$ and $\mat{R}_{i-1}$, $i=1,2$, where the ODE for the $i$th asymptotic coefficient depends on lower order coefficients, while the surface moments depend only on the geometry of the particle. The order of the corresponding ODEs depends on the choice of the mass function $\alpha_{mass}$. For example in the case $\alpha_{mass}(\epsilon)=\epsilon$, $\vec{f}_1$ depends on $\diffd\vec{v}_1/\diffd t$ and thus \eqref{eq_asymAn_solvCond1} becomes second order for $\vec{c}_1$ with an additional initial condition $\vec{v}_1(0)=\vec{0}$. However, under suitable regularity assumptions on $\vec{u}_0$, the corresponding right-hand sides of the ODE systems are at least continuous and thus at least local solutions exist by Theorem of Peano.
 \item[2)] 
 Solving the solvability conditions of the Stokes problems~\eqref{eq_asymAn_solvCondAll} determine the Dirichlet conditions~\eqref{eq_asymAn_dirichlet}. However, it is not our aim to examine the existence, uniqueness and regularity of the Stokes solutions given by~\eqref{eq_asymAn_dirichlet},~\eqref{eq_asymAn_decay} in general in this paper. Instead we will discuss some special problems in Section~\ref{sec_4_specCase} where analytical solutions are available.
 \item[3)] 
 Just like in \cite{Junk2007}, Lemma~\ref{lem_asymAna_junk} and~\ref{lem_asymAna_solvCond} imply a procedure for constructing a $\gO(\epsilon)$-solution of~\eqref{eq_mathMod_dimFreeSystem}: First the Navier-Stokes equations \eqref{eq_asymAn_navStokesMomentum} are solved to get $\vec{u}_0$ and $p_0$, then the path $\vec{c}_0$ of the particle is obtained from the tracer condition~\eqref{eq_asymAn_tracerCond}. The solution of the solvability conditions~\eqref{eq_asymAn_solvCond1} together with the corresponding differential equations~\eqref{eq_asymAn_particleEqs} for $i=1$ provides the coefficients $\vec{c}_1$, $\vec{v}_1$ and $\vec{\omega}_0$, $\mat{R}_0$ that define the Dirichlet condition~\eqref{eq_asymAn_dirichlet}. The stationary Stokes problem~\eqref{eq_asymAn_statStokesComplete} for $i=1$ is solved. Finally, the last two steps are repeated to get $\vec{u}_{loc,2}$ which completes the procedure.
\end{itemize}
\end{remark}

\begin{remark}[Density ratio scaling]\label{rem_asymAn_impactOfScaling}
We want to point out the effect of the density ratio scaling introduced in~\eqref{eq_mathMod_massfcn}. The momentum balances of the particle \eqref{eq_asymAn} can be written as 
 \begin{align*}
  \epsilon^{k-2}(\vec{k}_0+\epsilon\vec{k}_1)&=\vec{m}_1^v+\epsilon\vec{m}_2^v,\\
  \epsilon^{k-1}(\vec{\ell}_0+\epsilon\vec{\ell}_1)&=\vec{m}_1^{\omega}+\epsilon\vec{m}_2^{\omega},
 \end{align*}
with $\vec{k}_i$, $\vec{\ell}_i$ of Abbreviation~\ref{abb_asymAn_1} and $\vec{m}_{i+1}^v,\vec{m}_{i+1}^{\omega}$ denoting the according integral terms appearing on the right-hand sides of~\eqref{eq_asymAn_intCond1} and~\eqref{eq_asymAn_intCond2}. The terms $\vec{k}_i,\vec{\ell}_i$ describe the linear and angular accelerations of the particle, while the integral terms connect them to the Dirichlet conditions of the corresponding local flow fields via the solvability conditions~\eqref{eq_asymAn_solvCondAll}. Different choices of the mass function lead to the following pattern for the balancing of the accelerations with the integral terms:
\begin{align}\label{eq_asymAn_tableOfRegimesLin}
 \setlength{\arraycolsep}{2.5ex}
 \begin{array}{c|ccccc}
  \ell\backslash k&0&1&2&3&4\\\hline
  -2&\vec{k}_0=\vec{0}&\vec{0}=\vec{0}&\vec{0}=\vec{0}&\vec{0}=\vec{0}&\vec{0}=\vec{0}\\
  -1&\vec{k}_1=\vec{0}&\vec{k}_0=\vec{0}&\vec{0}=\vec{0}&\vec{0}=\vec{0}&\vec{0}=\vec{0}\\
  0&...=\vec{m}_1^v&\vec{k}_1=\vec{m}_1^v&\vec{k}_0=\vec{m}_1^v&\vec{0}=\vec{m}_1^v&\vec{0}=\vec{m}_1^v\\
  1&...=\vec{m}_2^v&...=\vec{m}_2^v&\vec{k}_1=\vec{m}_2^v&\vec{k}_0=\vec{m}_2^v&\vec{0}=\vec{m}_2^v\\
  2&...=...&...=...&...=...&\vec{k}_1=...&\vec{k}_0=...\\
  3&...=...&...=...&...=...&...=...&\vec{k}_1=...
 \end{array}
\end{align}
 \begin{align}\label{eq_asymAn_tableOfRegimesAng}
 \setlength{\arraycolsep}{2.5ex}
 \begin{array}{c|ccccc}
  \ell\backslash k&0&1&2&3&4\\\hline
  -1&\vec{\ell}_0=\vec{0}&\vec{0}=\vec{0}&\vec{0}=\vec{0}&\vec{0}=\vec{0}&\vec{0}=\vec{0}\\
  0&\vec{\ell}_1=\vec{m}_1^{\omega}&\vec{\ell}_0=\vec{m}_1^{\omega}&\vec{0}=\vec{m}_1^{\omega}&\vec{0}=\vec{m}_1^{\omega}&\vec{0}=\vec{m}_1^{\omega}\\
  1&...=\vec{m}_2^{\omega}&\vec{\ell}_1=\vec{m}_2^{\omega}&\vec{\ell}_0=\vec{m}_2^{\omega}&\vec{0}=\vec{m}_2^{\omega}&\vec{0}=\vec{m}_2^{\omega}\\
  2&...=...&...=...&\vec{\ell}_1=...&\vec{\ell}_0=...&\vec{0}=...\\
  3&...=...&...=...&...=...&\vec{\ell}_1=...&\vec{\ell}_0=...\\
  4&...=...&...=...&...=...&...=...&\vec{\ell}_1=...
 \end{array}
\end{align}
Here each row shows the $\gO(\epsilon^\ell)$-correction of the momentum equations and each column corresponds to the choice of power of $\epsilon$ of the mass function.

For $k\leq1$ the condition $\diffd\vec{v}_0/\diffd t=\Fr^{-2}\vec{e}_g$ arises (see~\eqref{eq_asymAn_tableOfRegimesLin}). In combination with the tracer condition~\eqref{eq_asymAn_tracerCond}, which is independent of the mass function $\alpha_{mass}$, this leads to the implication that $\diffd\vec{u}_0(\vec{c}_0(t),t)/\diffd t=\Fr^{-2}\vec{e}_g$ in contradiction to the claim, $\vec{u}_0$ solving the Navier-Stokes equations in $\Omega$, according to (R1) of Lemma~\ref{lem_asymAna_junk}. This fact indicates that in this regime Assumption~\ref{assum_mathMod_assumptions1}, 1) does not hold anymore, since the inertial effects of the particle are too strong and the perturbation of the surrounding fluid is of the same order as $\vec{u}_0$. Thus, in consistence to the asymptotic, we restrict our classification of inertial types in Table~\ref{tab_mathMod_inertReg} to $k\geq 2$.
\end{remark}

\subsection{Extension of Stokes solutions to the particle domain}\label{subsec_defi_stokes_interior}

The formulation of the suspension model requires the definition of velocities and stresses on the whole domain $\Omega$, including the interior of the particle. Hence, we need to introduce a meaningful extension of the quantities to the particle domain $\aset{E}$. A possibility is to consider $\aset{E}$ as a part of the fluid domain and use the so-called singularity solutions which express the Stokes solutions by means of the Green's dyadic (Oseen-Burgers tensor) \cite{Kim2005}. Such solutions have the disadvantage of not being bounded in the neighborhood of the origin, which is an important requirement in our modeling as we will see in Section~\ref{sec_3_kinMod}. Alternatively, one could also think of a continuous extension of the Dirichlet conditions \eqref{eq_asymAn_dirichlet} to $\aset{E}$ and model the stresses as Newtonian. This leads to a discontinuity in the stresses at the particle boundary which contradicts with the desire for a smooth macroscopic stress on $\Omega$. Therefore, we follow here the approach of \cite{Patankar2000} and treat the particle as a fluid with a rigidity constraint. Proceeding from a dimensional description (analogously to Section~\ref{subsec_mathMod_threedimInterface}), we obtain a consistent model for the dimensionless stresses and velocities in the particle domain by help of the presented asymptotic techniques.

The fluid in the particle domain is characterized by the velocity $\vec{z}:\aset{E}(t)\times\R_0^+\to\R^3$  of a material point and the non-Newtonian stress $\mat{T}:\aset{E}(t)\times\R_0^+\to\R^{3\times3}$, i.e.\ 
\begin{subequations}\label{eq_defiStokesParticle_full}
\begin{align}\label{eq_defiStokesParticle}
 \rho_p\left(\partial_t\vec{z}+(\vec{z}\cdot\nabla)\vec{z}\right)&=\nabla\cdot\mat{T}^T+\rho_pg\vec{e}_g,\qquad\nabla\vec{z}+\nabla\vec{z}^T=\vec{0}, && \vec{x}\in\aset{E}(t),\\
 \vec{z}&=\vec{u},\qquad \mat{T}\cdot\vec{n}=\mat{S}[\vec{u}]\cdot\vec{n}, && \vec{x}\in\partial\aset{E}(t),
\end{align}
\end{subequations}
for all $t>0$ with the flow velocity $\vec{u}$ solving \eqref{eq_mathMod_NSE_dimensional} and appropriate initial conditions in consistency with \eqref{eq_mathMod_full_dimensional}. The stress $\mat{T}$ acts as Lagrange multiplier to the rigidity constraint in \eqref{eq_defiStokesParticle} and can be expressed as symmetric gradient field of the three-dimensional unknown $\vec{\lambda}:\aset{E}(t)\times\R_0^+\to\R^3$, i.e.\ $\mat{T}=\mat{T}[\vec{\lambda}]=\nabla\vec{\lambda}+\nabla\vec{\lambda}^T$ \cite{Patankar2000}.\footnote{The following considerations can equivalently be formulated for $\vec{\lambda}$, but since we are only interested in the stresses themselves, we do not use the inner structure of $\mat{T}$ explicitly. It is however important to note that $\mat{T}$ have only three degrees of freedom such that the subsequent boundary value problems are well-posed.} The velocity in the rigid body domain can be explicitly stated in terms of the particle quantities (center of mass $\vec{c}$, linear and angular velocities $\vec{v}$, $\vec{\omega}$ of \eqref{eq_mathMod_full_dimensional}) as $\vec{z}=B(\vec{\omega})\cdot(\vec{x}-\vec{c})+\vec{v}$. Using this expression and the ODE for the director triad~\eqref{eq_mathMod_R_dimensional} simplifies~\eqref{eq_defiStokesParticle_full} to a boundary value problem for $\mat{T}$, resp. $\vec{\lambda}$:
\begin{align*}
 \rho_p\left(\left(\frac{\diffd^2}{\diffd t^2}\mat{R}\right)\cdot\mat{R}^T\cdot(\vec{x}-\vec{c})+\frac{\diffd}{\diffd t}\vec{v}\right)&=\nabla\cdot\mat{T}[\vec{\lambda}]^T+\rho_pg\vec{e}_g, && \vec{x}\in\aset{E}(t),\\
 \mat{T}[\vec{\lambda}]\cdot\vec{n}&=\mat{S}[\vec{u}]\cdot\vec{n},&&\vec{x}\in\partial\aset{E}(t).
\end{align*}
To transform the problem to the particle reference state we set $\mat{T}(\vec{x},t)=\mat{R}(t)\cdot\tilde{\mat{T}}(\vec{y}(\vec{x},t),t)\cdot\mat{R}^T(t)$. In dimensionless form it is given by 
\begin{subequations}\label{eq_defiStokesParticle_dimless}
\begin{align}
 \rho\Re\left(\frac{\diffd^2}{\diffd t^2}\left(\epsilon\mat{R}\cdot\vec{y}+\vec{c}\right)-\Fr^{-2}\vec{e}_g\right)&=\epsilon^{-1}\mat{R}\cdot\nabla_{\vec{y}}\cdot\tilde{\mat{T}}^T,&& \vec{y}\in\aset{E},\label{eq_defiStokesParticle_dimless_1}\\
 \tilde{\mat{T}}\cdot\vec{n}&=\mat{R}^T\cdot\mat{S}[\vec{u}](\vec{x}(\vec{y},t,\epsilon),t)\cdot\mat{R}\cdot\vec{n},&& \vec{y}\in\partial\aset{E}.\label{eq_defiStokesParticle_dimless_2}
\end{align}
\end{subequations}
We model $\tilde{\mat{T}}$ as composition of the Newtonian stress of $\vec{u}_0$ and some disturbance, $\tilde{\mat{T}}(\vec{y},t,\epsilon)=\mat{R}^{T}(t)\cdot\mat{S}[\vec{u}_0](\vec{x}(\vec{y},t,\epsilon),t)\cdot\mat{R}(t)+\mat{T}_{loc}(\vec{y},t,\epsilon)$. As asymptotic expansion we take a regular power series in the size parameter $\epsilon$ for $\mat{T}_{loc}$ and use a Taylor series in $\vec{c}_0$ for $\mat{S}[\vec{u}_0](\vec{x}(\vec{y},t,\epsilon),t)$. With
\begin{align*}
 \tilde{\mat{T}}^e = \mat{T}_{loc,1}+\mat{R}_0^T\cdot\mat{S}[\vec{u}_0]\cdot\mat{R}_0+\epsilon\left(\mat{T}_{loc,2}+\mat{R}_0^T\cdot D_1\mat{S}[\vec{u}_0]\cdot\mat{R}_0+\mat{R}_1^T\cdot\mat{S}[\vec{u}_0]\cdot\mat{R}_0+\mat{R}_0^T\cdot\mat{S}[\vec{u}_0]\cdot\mat{R}_1\right),
\end{align*}
we can formulate the asymptotic model for the stresses in Lemma~\ref{lem_defiStokesParticle_asymptoticModel}.

\begin{lem}[Asymptotic model of stresses in particle domain]\label{lem_defiStokesParticle_asymptoticModel}
Let the requirements of Lemma~\ref{lem_asymAna_junk} be fulfilled. Let $\mat{T}_{loc,i}=\nabla_{\vec{y}}\vec{\lambda}_{loc,i}+\nabla_{\vec{y}}\vec{\lambda}_{loc,i}^T$, $i=1,2$ be the solutions of the following boundary value problems for $t>0$
 \begin{subequations}
 \begin{align}
  \nabla_{\vec{y}}\cdot\mat{T}_{loc,i}^T &= \hat{\vec{f}}_i, && \vec{y}\in\aset{E},\label{eq_defiStokesParticle_forceDef}\\
  \mat{T}_{loc,i}\cdot\vec{n} &= \mat{S}[\vec{u}_{loc,i}]\cdot\vec{n}, && \vec{y}\in\partial\aset{E},\label{eq_defiStokesParticle_dirichletStress}
 \end{align}
 \end{subequations}
 where the force terms are given by
 \begin{align*}
  \hat{\vec{f}}_1 &= \begin{cases}\vec{0},&k\geq3\\|\aset{E}|^{-1}\vec{k}_0,&k=2\end{cases},\\
  \hat{\vec{f}}_2 &=-\mat{R}_0^T\cdot\nabla_{\vec{x}}\cdot\mat{S}[\vec{u}_0]^T-\mat{R}_0^T\cdot\mat{R}_1\cdot\hat{\vec{f}}_1+\begin{cases}\vec{0},&k\geq4\\|\aset{E}|^{-1}\vec{k}_0,&k=3\\|\aset{E}|^{-1}m\Re\mat{R}_0^T\cdot\left(\frac{\diffd}{\diffd t}\vec{v}_1+\frac{\diffd^2}{\diffd t^2}\mat{R}_0\cdot\vec{y}\right),&k=2\end{cases}.
 \end{align*}
Then $\tilde{\mat{T}}^e$, $\mat{R}^e$, $\vec{c}^e$ solve the system~\eqref{eq_defiStokesParticle_dimless} at least up to an error of $\gO(\epsilon)$.
\end{lem}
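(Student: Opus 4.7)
The plan is to insert the ansatz $\tilde{\mat{T}}^e$ together with the expansions of $\mat{R}$, $\vec{c}$, $\vec{v}$, $\vec{\omega}$ from Lemma~\ref{lem_asymAna_junk} into system~\eqref{eq_defiStokesParticle_dimless} and match powers of $\epsilon$. The verification splits into two independent pieces: the stress traction boundary condition~\eqref{eq_defiStokesParticle_dimless_2} and the bulk momentum equation~\eqref{eq_defiStokesParticle_dimless_1}.

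For the boundary condition I would first observe that, since $\vec{u}=\vec{u}_0+\mat{R}\cdot\vec{u}_{loc}$ and since differentiating each local field in $\vec{x}$ produces the chain-rule factor $\epsilon^{-1}$ coming from~\eqref{eq_mathMod_backTrafo}, the full Newtonian stress decomposes as
\begin{align*}
\mat{S}[\vec{u}](\vec{x},t,\epsilon)=\mat{S}[\vec{u}_0](\vec{x},t)+\sum_{i\geq 1}\epsilon^{i-1}\mat{R}(t)\cdot\mat{S}[\vec{u}_{loc,i}](\vec{y},t)\cdot\mat{R}(t)^T.
\end{align*}
Taylor-expanding $\mat{S}[\vec{u}_0]$ about $\vec{c}_0$ with the operators $\diffD_i$ and substituting $\mat{R}=\mat{R}_0+\epsilon\mat{R}_1+\gO(\epsilon^2)$ turns the right-hand side of~\eqref{eq_defiStokesParticle_dimless_2} into exactly the combination of global-stress terms already present in $\tilde{\mat{T}}^e$, supplemented by $\mat{S}[\vec{u}_{loc,1}]\cdot\vec{n}+\epsilon\,\mat{S}[\vec{u}_{loc,2}]\cdot\vec{n}$. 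Matching with $\tilde{\mat{T}}^e\cdot\vec{n}$ order by order then leaves precisely the traction identities~\eqref{eq_defiStokesParticle_dirichletStress} for $i=1,2$.

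For the bulk equation I would first use $\diffd\mat{R}/\diffd t=B(\vec{\omega})\cdot\mat{R}$ to rewrite $\diffd^2\vec{x}/\diffd t^2=\diffd\vec{v}/\diffd t+\epsilon\,(\diffd^2\mat{R}/\diffd t^2)\cdot\vec{y}$, and then apply $\rho=\epsilon^{k-3}m/|\aset{E}|$ from~\eqref{eq_mathMod_massfcn} to obtain
\begin{align*}
\epsilon\rho\Re\bigl(\tfrac{\diffd^2\vec{x}}{\diffd t^2}-\Fr^{-2}\vec{e}_g\bigr)=\epsilon^{k-2}\tfrac{m\Re}{|\aset{E}|}\Bigl[\bigl(\tfrac{\diffd\vec{v}_0}{\diffd t}-\Fr^{-2}\vec{e}_g\bigr)+\epsilon\bigl(\tfrac{\diffd\vec{v}_1}{\diffd t}+\tfrac{\diffd^2\mat{R}_0}{\diffd t^2}\vec{y}\bigr)+\gO(\epsilon^2)\Bigr].
\end{align*}
On the right-hand side the contributions $\mat{R}_i^T\mat{S}[\vec{u}_0]|_{\vec{c}_0}\mat{R}_j$ are annihilated by $\nabla_{\vec{y}}$, while the $\mat{R}_0^T\diffD_1\mat{S}[\vec{u}_0]\mat{R}_0$ piece reduces, via the chain rule applied to $\vec{x}_1=\mat{R}_0\vec{y}+\vec{c}_1$ and the orthogonality $\mat{R}_0\mat{R}_0^T=\mat{I}$, to $\mat{R}_0^T\cdot\nabla_{\vec{x}}\cdot\mat{S}[\vec{u}_0]^T$; hence
\begin{align*}
\mat{R}\cdot\nabla_{\vec{y}}\cdot(\tilde{\mat{T}}^e)^T=\mat{R}_0\cdot\nabla_{\vec{y}}\cdot\mat{T}_{loc,1}^T+\epsilon\bigl(\mat{R}_0\cdot\nabla_{\vec{y}}\cdot\mat{T}_{loc,2}^T+\nabla_{\vec{x}}\cdot\mat{S}[\vec{u}_0]^T+\mat{R}_1\cdot\nabla_{\vec{y}}\cdot\mat{T}_{loc,1}^T\bigr)+\gO(\epsilon^2).
\end{align*}

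The remaining step is to match the two sides order by order, distinguishing the three cases $k\geq 4$, $k=3$ and $k=2$ dictated by the $\epsilon^{k-2}$ prefactor on the left. After multiplying by $\mat{R}_0^T$ one directly reads off the piecewise definitions of $\hat{\vec{f}}_1$ and $\hat{\vec{f}}_2$ using the abbreviation $\vec{k}_0$; in particular, the term $-\mat{R}_0^T\mat{R}_1\hat{\vec{f}}_1$ in $\hat{\vec{f}}_2$ arises by substituting the already-determined leading-order relation into the cross-coupling $\mat{R}_1\cdot\nabla_{\vec{y}}\cdot\mat{T}_{loc,1}^T$ at $\gO(\epsilon)$. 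I expect the main obstacle to be the index-level verification that $\nabla_{\vec{y}}\cdot(\mat{R}_0^T\diffD_1\mat{S}[\vec{u}_0]\mat{R}_0)^T=\mat{R}_0^T\cdot\nabla_{\vec{x}}\cdot\mat{S}[\vec{u}_0]^T$, which requires careful bookkeeping of the rotation-matrix indices together with symmetry of $\mat{S}[\vec{u}_0]$; once this reduction is in place, the rest of the argument is systematic order-matching.
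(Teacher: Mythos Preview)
Your proposal is correct and follows essentially the same route as the paper's proof, which is very terse: insert the ansatz into~\eqref{eq_defiStokesParticle_dimless}, use the expansion of $\mat{S}[\vec{u}]$ from the proof of Lemma~\ref{lem_asymAna_junk} for the boundary condition, and note that the constant-in-$\vec{y}$ pieces $\mat{R}_i^T\mat{S}[\vec{u}_0]|_{\vec{c}_0}\mat{R}_j$ drop out under $\nabla_{\vec{y}}$ in the bulk equation. You add useful detail that the paper omits---in particular the explicit reduction $\nabla_{\vec{y}}\cdot(\mat{R}_0^T\diffD_1\mat{S}[\vec{u}_0]\mat{R}_0)^T=\mat{R}_0^T\cdot\nabla_{\vec{x}}\cdot\mat{S}[\vec{u}_0]^T$ and the case split on $k$---but the underlying argument is the same straightforward order-matching computation.
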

\begin{proof}
Similarly to the proof of Lemma~\ref{lem_asymAna_junk}, the approximation order follows from a straight forward computation.

Inserting $\tilde{\mat{T}}^e$ on the left side of~\eqref{eq_defiStokesParticle_dimless_2}, using the expansion described in the proof of Lemma~\ref{lem_asymAna_junk} for $\mat{S}[\vec{u}]$ on the right-hand side and taking~\eqref{eq_defiStokesParticle_dirichletStress} into account yields that~\eqref{eq_defiStokesParticle_dimless_2} holds up to $\gO(\epsilon^2)$.

When taking the divergence of $\tilde{\mat{T}}^e$ with respect to $\vec{y}$, the terms of the form $\mat{R}_i\cdot\mat{S}[\vec{u}_0]\cdot\mat{R}_j$ vanish, since the Newtonian stresses are evaluated at $\vec{c}_0$. With the asymptotic expansions of $\mat{R}$ and $\vec{c}$, \eqref{eq_defiStokesParticle_dimless_1} is fulfilled up to $\gO(\epsilon)$ provided that \eqref{eq_defiStokesParticle_forceDef} holds.
\end{proof}

\begin{remark}[Local velocity fields and forces in particle domain]\label{rem_defiStokesParticle_1}\hfill~
\begin{itemize}
 \item[1)]
By construction, the stress $\mat{T}$ induces a rigid body velocity $\vec{z}=\epsilon(\diffd\mat{R}/\diffd t)\cdot\vec{y}+\vec{v}$ in $\aset{E}$, while its approximation $\mat{T}^e = \mat{R}^e\cdot\tilde{\mat{T}}^e\cdot\mat{R}^{eT}$ results in the approximation of $\vec{z}$, namely
\begin{align*}
\vec{z}^e=\vec{v}_0+\epsilon\left(\frac{\diffd}{\diffd t}\mat{R}_0\cdot\vec{y}+\vec{v}_1\right)+\epsilon^2\left(\frac{\diffd}{\diffd t}\mat{R}_1\cdot\vec{y}+\vec{v}_2\right),\qquad\vec{y}\in\aset{E}.
\end{align*}
Since the functions $\vec{v}_i$ and $\mat{R}_i$ do not depend on $\vec{y}$, we can rewrite $\vec{z}^e$ as
\begin{align*}
\vec{z}^e=\vec{u}_0(\vec{x}(\vec{y},t,\epsilon),t)+\epsilon\mat{R}^e(\vec{h}_1(\vec{y},t)+\epsilon\vec{h}_2(\vec{y},t))+\gO(\epsilon^3),
\end{align*}
using Abbreviation~\ref{abb_asymAn_1}, where $\vec{h}_i(\vec{y},t)$ is the continuous extension of the Dirichlet conditions of $\vec{u}_{loc,i}$ from Lemma~\ref{lem_asymAna_junk} to $\aset{E}$. This way we can extend the local disturbance velocity fields to $\aset{E}$ by setting $\vec{u}_{loc,i}=\vec{h}_{loc,i}$. The so defined velocity is bounded by the assumptions of Lemma~\ref{lem_asymAna_junk}.
\item[2)]
The force terms $\hat{\vec{f}}_i$ are not equal to $\vec{f}_i$ of Abbreviation~\ref{abb_asymAn_1}, but $\int_{\aset{E}}\hat{\vec{f}}_i\diffd\vec{y}=\vec{f}_i$ holds true in consistency with \eqref{eq_defiStokesParticle_dirichletStress}.
\end{itemize}
\end{remark}


\setcounter{equation}{0} \setcounter{figure}{0}
\section{Kinetic model for a particle suspension}\label{sec_3_kinMod}

In this section we present our main result, the asymptotical (macroscale) description for a particle suspension under consideration of inertial effects. For this purpose we set up mass and momentum balances by considering the particle suspension as a stochastic homogeneous fluid whose behavior is characterized by non-Newtonian stresses. To obtain a model for the qualitative behavior of this fluid, which is independent of the specific realization, we analyze the averaged equations. With the help of the ergodicity assumption we are able to derive an analytical expression for the bulk stresses. This procedure goes originally back to Batchelor in~\cite{Batchelor1970}, the novelty of our work is the use of rigorous asymptotical results as model ingredients as well as the regard of inertial effects. These effects involve additional net volume forces and arise from the small deviations of the particles motions from the streamlines of the surrounding fluid.

\subsection{General framework}\label{subsec_kinMod_general}

Embedded into Batchelor's framework \cite{Batchelor1970} of suspension modeling, we introduce the necessary modifications to address the extra stresses induced by the particles' inertia. 

We describe the suspension as a homogeneous fluid whose microscale properties vary stochastically depending on the initial configuration of the positions and orientations of the immersed particles. Given the probability space $(\aset{W},\aset{A},\mu)$, let $\vec{w}(\ldotp,\ldotp,\omega):\Omega\times\R_0^+\to\R^3$ and $q(\ldotp,\ldotp,\omega):\Omega\times\R_0^+\to\R$ be velocity and pressure fields, $\omega \in \aset{W}$. The particle suspension is assumed to be described by $(\vec{w},q)$ that is almost surely a solution of the incompressible flow problem
\begin{align*}
  \Re\left(\partial_t\vec{w}+(\vec{w}\cdot\nabla_{\vec{x}})\vec{w}\right)&=\nabla_{\vec{x}}\cdot\mat{\Sigma}^T+\Re\Fr^{-2}\vec{e}_g,\qquad \nabla_{\vec{x}}\cdot\vec{w}=0, &&\text{for }(\vec{x},t)\in\Omega\times\R^+,
\end{align*}
supplemented with appropriate initial, boundary and $\mat{\Sigma}$-related closure conditions. As in statistic turbulence modeling (see e.g.~\cite{Wilcox1993}) we decompose the random velocity field into a mean and a fluctuating part with expectation $\E[.]$
\begin{align*}
\vec{w}(\vec{x},t,\omega)=\vec{u}(\vec{x},t)+\vec{u}'(\vec{x},t,\omega), \qquad \E[\vec{w}]=\vec{u},
\end{align*}
analogously for the pressure $q=p+p'$. The task is to model the bulk stress $\mat{\Sigma}$ or the inner force $\nabla_{\vec{x}}\cdot\mat{\Sigma}^T$, respectively. We particularly split the force into a divergence of surface stresses and a body force that is generated by the fluctuations of the translational velocity of the immersed particles
\begin{align}\label{eq_split}
\nabla_{\vec{x}}\cdot\mat{\Sigma}^T=\nabla_{\vec{x}}\cdot{\mat{\Sigma}^s}^T+\vec{b}.
\end{align}
The viscous stresses are covered here by $\mat{\Sigma}^s$. The resulting averaged description of the suspension in $\Omega\times \R^+$ is then given by 
\begin{align}\label{eq_kinMod_kinmodel}
 \Re\left(\partial_t\vec{u}+(\vec{u}\cdot\nabla_{\vec{x}})\vec{u}\right)&=\nabla_{\vec{x}}\cdot\E[\mat{\Sigma}^s]^T+\E[\vec{b}]-\Re\nabla_{\vec{x}}\cdot\E[\vec{u}'\otimes\vec{u}']^T+\Re\Fr^{-2}\vec{e}_g, \quad
 \nabla_{\vec{x}}\cdot\vec{u}=0
\end{align}
(an analogue in turbulence modeling are the Reynolds-Averaged-Navier-Stokes equations).

\begin{assumption}[Suspension properties]\label{assum_kinMod_assumptions1}\hfill
 \begin{itemize}
  \item[(A1)]  The suspension consists of similar, independent and identically distributed particles $\aset{E}_k(t)$.
  \item[(A2)]  The suspension is dilute, i.e.\ the mean particle spacing is sufficiently big.
  \item[(A3)]  Assumption~\ref{assum_mathMod_assumptions1} (on particle-flow interactions) applies to every particle.
  \item[(A4)]  No outer forces or moments act on the particles except of the gravitational force.
  \item[(A5)]  The characteristic length scale of the fluid is much bigger than the length scale of the particle and the mean particle spacing.
  \item[(A6)]  The fluctuations are generated by superposition of the local disturbance fields of the particles.
  \item[(A7)]  The suspension is Newtonian up to an error of $\gO(\epsilon)$.
  \item[(A8)]  The suspension is locally statistically homogeneous.
  \item[(A9)] The ergodicity hypothesis holds, i.e.\ for any involved random function $f:\Omega\times\R^+\times\aset{W}\to\R^n$, $n\geq1$, there is almost surely an equality of the expectation and the integral of this function over a suitable space region $\aset{V}(\vec{x},t)\subset\Omega$:
  \begin{align*}
   \E[f](\vec{x},t) = \int_{\aset{W}}f(\vec{x},t,\omega) \diffd\mu(\omega) \stackrel{!}{=} \frac{1}{\aset{V}(\vec{x},t)}\int_{\aset{V}(\vec{x},t)}f(\vec{\xi},t,\omega)\diffd\vec{\xi} = \E[f]^{\aset{V}}(\vec{x},t).
  \end{align*}
 \end{itemize}
\end{assumption}

Most of these assumptions were already used in \cite{Batchelor1970}: With (A1) and (A2) particle interactions that arise if two particles are close together can be neglected. Assumptions (A8) and (A9) allow the switching from the abstract mean value to the analytically powerful volume average. To use the results of the asymptotic analysis we impose (A3)-(A7). In contrast to \cite{Batchelor1970} we especially presuppose (A7) in order to consistently apply the asymptotical one-particle model of Section~\ref{sec_1_mathMod}. Otherwise we would need to consider a non-Newtonian stress term of the surrounding fluid in the asymptotics. Note that the introduced stress splitting \eqref{eq_split} becomes essential for the proper handling of inertial particles when dealing with volume averages. Whereas the expectation $\E[.]$ is linear, the permutability of  $\E[.]^{\aset{V}}$ and the divergence operator is generally not valid, see also Remark~\ref{rem_specCase_motSplitting} in Section~\ref{sec_4_specCase}.

\subsection{Macroscopic stresses and forces}

The core of the suspension model \eqref{eq_kinMod_kinmodel} are the macroscopic stresses and forces that we deduce from the asymptotic one-particle model. We realize the fluctuation related quantities that are marked with the index $'$ by superposing the local disturbance fields $._{loc}$ of the particles (Assumption~\ref{assum_kinMod_assumptions1}, (A6)).

\subsubsection*{Surface stresses}
The derivation of the model for $\mat{\Sigma}^s$ goes along \cite{Batchelor1970}. It is based on the idea of applying the ergodicity hypothesis for $\E[\mat{\Sigma}^s]$ and thus integrating the stresses over a suitable averaging volume $\aset{V}(\vec{x},t)$ which contains $N(\vec{x},t)$ particles $\aset{E}_k(t)$, $k=1,\ldots,N$. 
The underlying assumption on $\E[\mat{\Sigma}^s]^{\aset{V}}(\vec{x},t)$ is
\begin{align}\label{eq_Sigmas}
 \E[\mat{\Sigma}^s]^{\aset{V}} = \mat{S}[\vec{u}]+\frac{1}{|\aset{V}|}\int_{\aset{V}\setminus\cup\aset{E}_k}-p'\mat{I}+\nabla_{\vec{\xi}}\vec{u}'+\nabla_{\vec{\xi}}\vec{u}'^T\diffd \vec{\xi}+\frac{1}{|\aset{V}|}\int_{\cup\aset{E}_k}\mat{T}'\diffd\vec{\xi},
\end{align}
where the stress fluctuations outside the particle are treated as Newtonian and inside the particles with respect to an appropriate extension. We particularly model the velocity and stress fluctuations by means of the local disturbance fields given in Lemma~\ref{lem_asymAna_junk} for $\vec{u}'$ and in Lemma~\ref{lem_defiStokesParticle_asymptoticModel} for $\mat{T}'$. With (A2) we may restrict to the influence of the local fields generated by the $k$th particle on $\aset{E}_k$ and neglect particle interactions. Using the two identities
\begin{align*}
 \sum_j\int_{\aset{E}_\ell}\nabla_{\vec{\xi}}\cdot(T'_{ij}\xi_k\vec{e}_j)\diffd\vec{\xi}&=\sum_j\int_{\aset{E}_\ell}\left(\partial_{\xi_j}T'_{ij}\xi_k+T'_{ij}\delta_{jk}\right)\diffd\vec{\xi}=\int_{\aset{E}_\ell}(\nabla_{\vec{\xi}}\cdot \mat{T}'^T)_i\xi_k\diffd\vec{\xi}+\int_{\aset{E}_\ell}T'_{ik}\diffd\vec{\xi},\\
 0=\E[\partial_{x_j}u'_i]^{\aset{V}} &=\frac{1}{|\aset{V}|}\int_{\aset{V}\setminus\cup\aset{E}_\ell}\partial_{\xi_j}u'_i\diffd\vec{\xi}+\frac{1}{|\aset{V}|}\sum_\ell\int_{\aset{E}_\ell}\nabla_{\vec{\xi}}\cdot(u'_i\vec{e}_j)\diffd\vec{\xi},
\end{align*}
and the divergence theorem on \eqref{eq_Sigmas} results in
\begin{align*}
 \E[\mat{\Sigma}^s]^{\aset{V}} &=  \mat{S}[\vec{u}]-\frac{1}{|\aset{V}|}\int_{\aset{V}\setminus\cup\aset{E}_k}\hspace{-0.7cm}p'\mat{I}\diffd\vec{\xi} +\frac{1}{|\aset{V}|}\sum_k \left(\int_{\partial\aset{E}_k}\hspace{-0.4cm}(\mat{T}'\cdot\vec{n})\otimes\vec{\xi} - \vec{u}'\otimes\vec{n}-\vec{n}\otimes\vec{u}'\diffd s(\vec{\xi})-\int_{\aset{E}_k}\hspace{-0.2cm}(\nabla_{\vec{\xi}}\cdot\mat{T}'^T)\otimes\vec{\xi}\diffd\vec{\xi}\right).
\end{align*}
Hence, we obtain for the averaged surface stresses
\begin{subequations}
\begin{align}
 \E[\mat{\Sigma}^s]^{\aset{V}} &= \mat{S}[\vec{u}]+\mat{S}^p,\\\label{eq_Sp}
\mat{S}^p&= \frac{1}{|\aset{V}|}\sum_k\Big(\int_{\partial\aset{E}_k}(\mat{S}[\mat{R}^k\cdot\vec{u}_{loc}^{k}]\cdot\vec{n})\otimes\vec{\xi} - \mat{R}^k\cdot\vec{u}_{loc}^{k}\otimes\vec{n}-\vec{n}\otimes\vec{u}_{loc}^{k}\cdot\mat{R}^{kT}\diffd s(\vec{\xi})\\
&\qquad \qquad \quad -\int_{\aset{E}_k}\left(\nabla_{\vec{\xi}}\cdot\left(\mat{R}^k\cdot\mat{T}_{loc}^k\cdot\mat{R}^{kT}\right)^T\right)\otimes\vec{\xi}\diffd\vec{\xi}\Big)\nonumber
 \end{align}
 \end{subequations}
where $\mat{S}^p$ denotes the particle-induced stress tensor. The arising pressure term is incorporated here in the Newtonian stresses (as Lagrange multiplier to the incompressibility constraint). To express the integrals in $\mat{S}^p$ \eqref{eq_Sp} with respect to the particle reference state we use the identities
\begin{align*}
 \mat{S}[\mat{R}\cdot\vec{u}_{loc}]&=\sum_{i=1}^\infty -\epsilon^{i-1}p_{loc,i}\mat{I}+\epsilon^i\partial_{\vec{x}}(\mat{R}\cdot\vec{u}_{loc,i})+\epsilon^i\partial_{\vec{x}}(\mat{R}\cdot\vec{u}_{loc,i})^T \\
 &= \sum_{i=1}^\infty\epsilon^{i-1}\mat{R}\cdot (-p_{loc,i}\mat{I}+\partial_{\vec{y}}\vec{u}_{loc,i}+\partial_{\vec{y}}\vec{u}_{loc,i}^T)\cdot\mat{R}^T = \mat{R}\cdot\sum_{i=1}^\infty\epsilon^{i-1}\mat{S}[\vec{u}_{loc,i}] \cdot\mat{R}^T,\\
 \nabla_{\vec{x}}\cdot\left(\mat{R}\cdot\mat{T}_{loc}\cdot\mat{R}^{T}\right)^T &= \mat{R}\cdot\sum_{i=1}^\infty \epsilon^{i-2}\nabla_{\vec{y}}\cdot\mat{T}_{loc,i}^T,
 \end{align*}
 and
 \begin{align*}
 \int_{\partial\aset{E}_k}f(\vec{y}(\vec{x},t,\epsilon))n_j\diffd s(\vec{x}) &= \int_{\aset{E}_k}\partial_{x_j}f\diffd\vec{x}=\epsilon^{-1}\sum_\ell\int_{\aset{E}_k}\partial_{y_\ell}fR^T_{\ell j}\diffd\vec{x}\\
 &=\epsilon^{2}\sum_\ell R^T_{\ell j}\int_{\aset{E}}\partial_{y_\ell}f(\vec{y})\diffd\vec{y}=\epsilon^{2}\int_{\partial\aset{E}}f(\vec{y})(\mat{R}\cdot\vec{n})_j\diffd s(\vec{y})
\end{align*}
for any scalar-valued smooth function $f$. This implies
\begin{align*}
 \mat{S}^p&=\frac{1}{|\aset{V}|}\sum_k\sum_{i=1}^\infty\epsilon^{i+1}\mat{R}^k\cdot\int_{\partial\aset{E}}\mat{S}[\vec{u}_{loc,i}^{k}]\cdot\vec{n}\otimes(\epsilon\mat{R}^k\cdot\vec{y}+\vec{c}^k)\diffd s\\
&\quad -\epsilon^{i+2}\mat{R}^k\cdot\int_{\partial\aset{E}}\vec{u}_{loc,i}^k\otimes\vec{n}+\vec{n}\otimes\vec{u}_{loc,i}^k\diffd s(\vec{y})\cdot\mat{R}^{kT}
-\epsilon^{i+1}\mat{R}^k\cdot\int_{\aset{E}}\nabla_{\vec{y}}\cdot\mat{T}_{loc,i}^{kT}\otimes(\epsilon\mat{R}^k\cdot\vec{y}+\vec{c}^k)\diffd \vec{y}.
\end{align*}
Using Lemma~\ref{lem_defiStokesParticle_asymptoticModel}, the terms associated with the center of mass of each particle $\vec{c}^k$ cancel each other out and, since $\hat{\vec{f}}_1$ is independent of $\vec{y}$ and $\int_{\aset{E}}\vec{y}\diffd\vec{y}=\vec{0}$ by definition of center of mass in reference state, we get
\begin{align*}
 \mat{S}^p=\frac{1}{|\aset{V}|}\sum_k\epsilon^{3}\mat{R}^k\cdot\int_{\partial\aset{E}}\mat{S}[\vec{u}_{loc,1}^{k}]\cdot\vec{n}\otimes(\mat{R}^k\cdot\vec{y})-\vec{u}_{loc,1}^k\otimes\vec{n}-\vec{n}\otimes\vec{u}_{loc,1}^k\diffd s(\vec{y})\cdot\mat{R}^{kT}+\gO(\epsilon^4).
\end{align*}

\subsubsection*{Body forces}
For the random body force we use a similar approach. We assume that the overall fluctuating force in the averaging volume $\aset{V}(\vec{x},t)$ is generated by contributions of each single particle in the corresponding particle domain 
\begin{align*}
 \vec{b}(\vec{x},t,\omega)=\sum_k\vec{b}^k(\vec{x},t,\omega)\,\mathbb{I}_{\aset{E}_k(t)}(\vec{x}),
\end{align*}
where $\mathbb{I}$ denotes the indicator function, i.e.\ $\mathbb{I}_{\aset{A}}(\vec{x})=1$ for $\vec{x}\in\aset{A}$ and zero otherwise. Additionally, since the particles do not interact with each other, the random force generated by a single particle is modeled as the divergence of the local stresses in $\aset{E}_k$. The average $\E[\vec{b}]^{\aset{V}}(\vec{x},t)$ then follows as
\begin{align*}
 \E[\vec{b}]^{\aset{V}}&=\frac{1}{|\aset{V}|}\sum_k\int_{\aset{V}}\nabla_{\vec{\xi}}\cdot\left(\mat{R}^k\cdot\mat{T}^k_{loc}\cdot\mat{R}^{kT}\right)^T\mathbb{I}_{\aset{E}_k}\diffd\vec{\xi}=\frac{1}{|\aset{V}|}\sum_k\int_{\aset{E}_k}\nabla_{\vec{\xi}}\cdot\left(\mat{R}^k\cdot\mat{T}^k_{loc}\cdot\mat{R}^{kT}\right)^T\diffd\vec{\xi}\\
 &=\frac{1}{|\aset{V}|}\sum_k\mat{R}^k\cdot\sum_{i=1}^\infty\epsilon^{i+1}\int_{\aset{E}}\nabla_{\vec{y}}\cdot\mat{T}_{loc,i}^{kT}\diffd\vec{\xi}=\frac{1}{|\aset{V}|}\sum_k\mat{R}^k\cdot\sum_{i=1}^2\epsilon^{i+1}\vec{f}_i^k+\gO(\epsilon^4).
\end{align*}
The last equality holds by Lemma~\ref{lem_defiStokesParticle_asymptoticModel} and Remark~\ref{rem_defiStokesParticle_1},~2).

\subsubsection*{Reynolds-kind stress tensor}
The last step is the treatment of the Reynolds-kind stress term appearing in~\eqref{eq_kinMod_kinmodel}. The key here is the fact that the local disturbance velocity is scaled with $\epsilon$ and decreases fast enough at distance from the particle, while in its vicinity as well as in the particle domain it is bounded: By Lemma~\ref{lem_asymAna_junk} the local fields fulfill $\vec{u}_{loc,i}\sim \gO(r^{-1})$ for $r=\|\vec{y}\|\gg0$ and, since $\|\vec{y}\|=\epsilon^{-1}\|\vec{x}-\vec{c}\|$, it holds $\vec{u}_{loc}\sim\gO(\epsilon^2)$ for $\|\vec{x}-\vec{c}\|\gg0$. By (A2) and (A5) we find around every particle in $\aset{V}$ a ball $\aset{B}_{r_k}$ of radius $r_k$, containing only the particle $\aset{E}_k$ and it holds:
\begin{align*}
 \E[\vec{u}'\otimes\vec{u}']^{\aset{V}}&=\frac{1}{|\aset{V}|}\int_{\aset{V}}\vec{u}'\otimes\vec{u}'\diffd\vec{\xi}=\frac{1}{|\aset{V}|}\sum_{k=1}\mat{R}^k\cdot\int_{\aset{V}}\vec{u}_{loc}^k\otimes\vec{u}_{loc}^{k}\diffd\vec{\xi}\cdot\mat{R}^{kT}\\
 &=\frac{1}{|\aset{V}|}\sum_{k=1}\mat{R}^k\cdot\int_{\aset{B}_{r_k}}\vec{u}_{loc}^k\otimes\vec{u}_{loc}^{k}\diffd\vec{\xi}\cdot\mat{R}^{kT}+\gO(\epsilon^4)\\
 &=\frac{\epsilon^{5}}{|\aset{V}|}\sum_{k=1}\mat{R}^k\cdot\left(\int_{\tilde{\aset{B}}_{\tilde{r}_k}\setminus\aset{E}}\vec{u}_{loc,1}^k\otimes\vec{u}_{loc,1}^{k}\,\diffd\vec{y}+\int_{\aset{E}}\vec{u}_{loc,1}^k\otimes\vec{u}_{loc,1}^{k}\,\diffd\vec{y}\right)\cdot\mat{R}^{kT}+\gO(\epsilon^4),
\end{align*}
where $\tilde{\aset{B}}_{\tilde{r}_k}$ denotes the ball centered at the origin with radius $\tilde{r}_k$. The first integral in the last expression is bounded by Lemma~\ref{lem_asymAna_junk} and the last by Remark~\ref{rem_defiStokesParticle_1}. Altogether we get $\E[\vec{u}'\otimes\vec{u}']^{\aset{V}}\sim\gO(\epsilon^4)$.

\subsection{Asymptotical suspension model}

By means of the macroscopic stress and force descriptions we formulate an asymptotic suspension model to \eqref{eq_kinMod_kinmodel} that is consistent to the one-particle model and valid up to order $\gO(\epsilon^4)$ in the size parameter $\epsilon$.

\begin{theorem}[Asymptotical model of a suspension with weakly inertial tracer particles]\label{thm_kinMod_centralthm}
Let Assumption~\ref{assum_kinMod_assumptions1} be fulfilled, and let the local behavior of any particle in the suspension be determined by Lemma~\ref{lem_asymAna_junk} and Lemma~\ref{lem_defiStokesParticle_asymptoticModel}. Then the suspension is macroscopically described up to an error of $\gO(\epsilon^4)$ by
\begin{subequations}\label{eq_kinMod_thmEqs}
\begin{align}
 \Re\left(\partial_t\vec{u}+(\vec{u}\cdot\nabla_{\vec{x}})\vec{u}\right)&=\nabla_{\vec{x}}\cdot\left(\mat{S}[\vec{u}]+\mat{\Sigma}^p\right)^T+\vec{b}^p+\Re\Fr^{-2}\vec{e}_g,\quad \nabla_{\vec{x}}\cdot\vec{u}=0&&\text{in }\Omega\times\R^+,\label{eq_kinMod_kinmodel_full}
\end{align}
supplemented with appropriate initial and boundary conditions. The respective particle-induced stress and force are
\begin{align}
 \mat{\Sigma}^p(\vec{x},t) &= \frac{1}{|\aset{V}(\vec{x},t)|}\sum_{k=1}^{N(\vec{x},t)}\epsilon^3\mat{R}_0^k(t)\cdot\int_{\partial\aset{E}}\mat{S}[\vec{u}_{loc,1}^{k}](\vec{y},t)\cdot\vec{n}\otimes\vec{y}\label{eq_kinMod_regularStress}\\
 &\hspace*{25ex}-\vec{h}_{1}^k(\vec{y},t)\otimes\vec{n}-\vec{n}\otimes\vec{h}_{1}^k(\vec{y},t)\diffd s(\vec{y})\cdot\mat{R}_0^{kT}(t),\nonumber\\
 \vec{b}^p(\vec{x},t) &= \frac{1}{|\aset{V}(\vec{x},t)|}\sum_{k=1}^{N(\vec{x},t)}\left(\epsilon^2\mat{R}^k_0(t)\cdot\vec{f}^k_1(t)+\epsilon^3\left(\mat{R}^k_0(t)\cdot\vec{f}^k_2(t)+\mat{R}^k_1(t)\cdot\vec{f}^k_1(t)\right)\right),\label{eq_kinMod_singularStress}
\end{align}
\end{subequations}
with $N(\vec{x},t)$ being the number of particles in the averaging volume $\aset{V}(\vec{x},t)$ and with index $^k$ marking the asymptotic coefficients of the $k$th particle. 
\end{theorem}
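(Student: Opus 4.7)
The theorem essentially assembles the three contributions to the averaged momentum equation~\eqref{eq_kinMod_kinmodel} that have already been separately computed in the preceding two subsections. My plan is, first, to invoke the ergodicity hypothesis (A9) of Assumption~\ref{assum_kinMod_assumptions1} to identify each expectation $\E[\mat{\Sigma}^s]$, $\E[\vec{b}]$ and $\E[\vec{u}'\otimes\vec{u}']$ with its volume average over a suitable $\aset{V}(\vec{x},t)$. This reduces the task to evaluating three deterministic volume integrals, for which the one-particle asymptotics of Lemmas~\ref{lem_asymAna_junk} and~\ref{lem_defiStokesParticle_asymptoticModel} supply closed-form integrands under the superposition hypothesis (A6) together with the dilute, non-interacting assumptions (A1)--(A2).

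For the surface stress $\E[\mat{\Sigma}^s]^{\aset{V}}$, I would take the expression $\mat{S}[\vec{u}]+\mat{S}^p$ already derived via the divergence-theorem identities and the transformation to the particle reference state. The sum over particles collapses because $\hat{\vec{f}}_1$ is independent of $\vec{y}$ and the origin of the reference state coincides with the centre of mass, so the terms proportional to $\vec{c}^k$ drop out. Using the Dirichlet condition~\eqref{eq_asymAn_dirichlet}, $\vec{u}_{loc,1}|_{\partial\aset{E}}=\vec{h}_1$, together with truncation to $i=1$ (which leaves an $\gO(\epsilon^4)$ remainder in $\mat{S}^p$), one recovers exactly~\eqref{eq_kinMod_regularStress}. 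For the body force, I would model $\vec{b}$ as a sum of particle-supported contributions $\nabla_{\vec{\xi}}\cdot(\mat{R}^k\cdot\mat{T}^k_{loc}\cdot\mat{R}^{kT})^T\mathbb{I}_{\aset{E}_k}$, pass to reference coordinates, and invoke Remark~\ref{rem_defiStokesParticle_1}, 2), which identifies $\int_{\aset{E}}\hat{\vec{f}}_i^k\diffd\vec{y}$ with the $\vec{f}_i^k$ of Abbreviation~\ref{abb_asymAn_1}. Expanding $\mat{R}^k=\mat{R}_0^k+\epsilon\mat{R}_1^k+\gO(\epsilon^2)$ and keeping $i=1,2$ reproduces~\eqref{eq_kinMod_singularStress} with an $\gO(\epsilon^4)$ error.

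Finally, the Reynolds-kind term $\E[\vec{u}'\otimes\vec{u}']^{\aset{V}}$ is absorbed into the error: by (A2) and (A5) one may isolate each particle in a ball $\aset{B}_{r_k}\subset\aset{V}$ containing only $\aset{E}_k$, use the decay $\vec{u}_{loc,i}\sim\gO(\|\vec{y}\|^{-1})$ from Lemma~\ref{lem_asymAna_junk} outside, and the boundedness of the extension from Remark~\ref{rem_defiStokesParticle_1}, 1) inside $\aset{E}$, yielding $\gO(\epsilon^4)$. Substituting the three averaged contributions into~\eqref{eq_kinMod_kinmodel} then produces~\eqref{eq_kinMod_kinmodel_full}.

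The main subtlety, and the reason the splitting~\eqref{eq_split} of the bulk force into $\nabla_{\vec{x}}\cdot{\mat{\Sigma}^s}^T+\vec{b}$ is essential, is that $\mat{\Sigma}^p$ is of order $\gO(\epsilon^3)$ while $\vec{b}^p$ enters already at $\gO(\epsilon^2)$ in the heavy tracer regime $k=2$ (through $\vec{f}_1=\vec{k}_0$). Attempting to represent $\vec{b}^p$ as the divergence of an additional stress tensor via a potential would require an exchange of volume averaging with differentiation that fails across $\partial\aset{E}_k$, breaking the $\gO(\epsilon^4)$ bookkeeping. The only genuine obstacle in the proof is to verify that, after applying the outer divergence in~\eqref{eq_kinMod_kinmodel_full}, the remainders from all three averaged contributions still combine to $\gO(\epsilon^4)$ uniformly across the three inertial regimes of Table~\ref{tab_mathMod_inertReg}.
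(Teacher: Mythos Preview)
Your proposal is correct and follows essentially the same route as the paper: the argument is precisely the assembly of the three volume-averaged contributions derived in Section~\ref{sec_3_kinMod} (surface stresses via the divergence-theorem identities and reference-state transformation, body forces via Lemma~\ref{lem_defiStokesParticle_asymptoticModel} and Remark~\ref{rem_defiStokesParticle_1},~2), and the $\gO(\epsilon^4)$ bound on the Reynolds-kind term via decay and boundedness), after invoking (A9). Your explicit mention of the expansion $\mat{R}^k=\mat{R}_0^k+\epsilon\mat{R}_1^k+\gO(\epsilon^2)$ to pass from $\mat{R}^k\cdot\sum_{i=1}^2\epsilon^{i+1}\vec{f}_i^k$ to the form~\eqref{eq_kinMod_singularStress} is in fact a step the paper leaves implicit.
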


The form of the particle-stress $\mat{\Sigma}^p$ in~\eqref{eq_kinMod_regularStress} is known from the work by Batchelor~\cite{Batchelor1970}, while our asymptotical approach for inertial particles (Section~\ref{subsec_mathMod_inertia}) give rise to an additional body force $\vec{b}^p$ that is generated by small deviations of the particles' center of mass from the streamlines of the surrounding fluid. Being $\gO(\epsilon^2)$, this extra-force dominates the particle contribution to the momentum of the fluid in the case of heavy tracer particles. One should further notice that the suspension behaves like a Newtonian fluid up to an error of $\gO(\epsilon^2)$ in consistency to the underlying Assumption~\ref{assum_kinMod_assumptions1}, (A7).


\section{Special case of a suspension with ellipsoidal particles}\label{sec_4_specCase} 
The suspension of particles that have the same density as the surrounding viscous carrier fluid and the shape of prolate ellipsoids is often treated in literature; the structure of the particle-induced stresses is well-known in this case, see \cite{Batchelor1970} and \cite{Leal1973}. In this section we illustrate the inertial particles' effects by comparing our asymptotical suspension model of Theorem~\ref{thm_kinMod_centralthm} with those classical results. The determination of the respective analytical forms for $\mat{\Sigma}^p$ and $\vec{b}^p$ requires knowledge of the Newtonian stresses of the local disturbance velocity $\mat{S}[\vec{u}_{loc,1}]$ at $\partial\aset{E}$, an explicit form of the Dirichlet conditions for $\vec{u}_{loc,1}$ and also of the differential equation for $\vec{c}_1$, which is hidden in the solvability conditions in Lemma~\ref{lem_asymAna_solvCond}. Therefore, we first study the solvability conditions, deriving the ODE for $\vec{c}_1$ and an expression for $\vec{\omega}_0$, also characterizing the Dirichlet conditions for $\vec{u}_{loc,1}$ and then use the corresponding local stresses to evaluate the integrals in \eqref{eq_kinMod_thmEqs}. The necessary calculations are quite technical and lengthy as they are strongly related to the geometrical properties of the ellipsoids. For the sake of completeness they are summerized in the appendix.

\subsection{General properties}

We start with introducing the quantities that characterize the geometry of arbitrary ellipsoids and discuss then the implications arising for the asymptotical framework.

An ellipsoid is given by $\aset{E}=\mat{D}\aset{B}_1$, where $\aset{B}_1$ denotes the unit ball in $\R^3$ and $\mat{D}=\diag(d_1,d_2,d_3)$  is the diagonal matrix with the lengths of the semi axes $d_i>0$, $i=1,2,3$.  The surface moments appearing in the solvability conditions \eqref{eq_asymAn_solvCondAll} in Lemma~\ref{lem_asymAna_solvCond} can be provided as
\begin{align*}
\vec{s}_q&=\vec{0},\qquad \hspace*{0.7ex} \vec{s}_{q+3}=\zeta_{q+3}|\aset{E}|(\mat{D}^2-\tr(\mat{D}^2)\mat{I})\vec{e}_{q}, \qquad \vec{t}_q= 3\zeta_q|\aset{E}|\vec{e}_q, \qquad\hspace*{6ex} \vec{t}_{q+3}=\vec{0},\\
  \mat{V}_q &= \vec{0},  \qquad \mat{V}_{q+3}=\zeta_{q+3}|\aset{E}|\mat{D}^2\cdot B(\vec{e}_{q}), \qquad\hspace*{5.4ex} W_q=\zeta_q|\aset{E}|\mat{D}^2\otimes\vec{e}_q, \qquad W_{q+3}=0,
 \end{align*}
with the geometry dependent constants $\zeta_{q}$, $q=1,2,3$. Appendix~\ref{subsec_appAnSt_detSurfMom} is dedicated to their derivation, the explicit expressions for $\zeta_{q}$ are stated at its end. From~\eqref{eq_asymAn_solvCondAll} we see that the structure of the surface moments implies a decoupling of the conditions for the linear and angular velocities, hence we get
 \begin{subequations}
 \begin{align}
  \mat{R}_0^T\cdot\vec{v}_1 &= \mat{R}_0^T\cdot\partial_{\vec{x}}\vec{u}_0\cdot\vec{c}_1 +(3|\aset{E}|\diag(\zeta_1,\zeta_2,\zeta_3))^{-1}\cdot\vec{f}_1,\label{eq_specCase_linode1}\\
   \mat{R}_0^T\cdot\vec{\omega}_0&=0.5\mat{R}_0^T\cdot\nabla_{\vec{x}}\times\vec{u}_0+|\aset{E}|^{-1}(\tr{\mat{D}^2}\mat{I}-\mat{D}^2)^{-1}\cdot\diag(\zeta_4,\zeta_5,\zeta_6)^{-1}\cdot\vec{g}_1\label{eq_specCase_rotode1}\\
 &\quad+(\tr{\mat{D}^2}\mat{I}-\mat{D}^2)^{-1}\cdot\diag(d_2^2-d_3^2,d_3^2-d_1^2,d_1^2-d_2^2)\cdot\begin{pmatrix}(\mat{R}_0^T\cdot\mat{E}[\vec{u}_0]\cdot\mat{R}_0)_{32}\\(\mat{R}_0^T\cdot\mat{E}[\vec{u}_0]\cdot\mat{R}_0)_{13}\\(\mat{R}_0^T\cdot\mat{E}[\vec{u}_0]\cdot\mat{R}_0)_{21}\end{pmatrix},\nonumber
 \end{align}
\end{subequations}
with $\mat{E}[\vec{u}_0]=0.5(\nabla_{\vec{x}}\vec{u}_0+\nabla_{\vec{x}}\vec{u}_0^T)$. In our set up, $\vec{g}_1\equiv\vec{0}$ always holds (cf.\ Abbreviation~\ref{abb_asymAn_1}). Consequently, the Dirichlet condition \eqref{eq_asymAn_dirichlet} is presented by
\begin{align*}
 \vec{h}_1 &= \mat{R}_0^T\cdot\left(\vec{v}_1-\partial_{\vec{x}}\vec{u}_0\cdot\vec{c}_1\right)+\mat{R}_0^T\cdot\left( B(\vec{\omega}_0)-\partial_{\vec{x}}\vec{u}_0\right)\cdot\mat{R}_0\cdot\vec{y}=\vec{h}_1^{const}+\mat{A}_1\cdot\vec{y}.
\end{align*}
where the matrix-valued function $\mat{A}_1=\mat{A}_1(t)$ is independent of $\vec{y}$ and the time-dependent vector $\vec{h}_1^{const}(t)$ becomes
\begin{align*}
 \vec{h}_1^{const} &= (3|\aset{E}|\diag(\zeta_q,q=1,2,3))^{-1}\cdot\vec{f}_1,
\end{align*}
because $\vec{v}_1-\partial_{\vec{x}}\vec{u}_0\cdot\vec{c}_1$ is exclusively determined by the source term in \eqref{eq_specCase_linode1}.
The last ingredient for the suspension model are the Newtonian stresses of $\vec{u}_{loc,1}$ at $\partial\aset{E}$. Since the Stokes problems in Lemma~\ref{lem_asymAna_junk} are linear, the local velocity field $\vec{u}_{loc,1}$ is given by the linear combination of the Oberbeck $\vec{u}_{loc,1,Ob}$ and Jeffery solutions $\vec{u}_{loc,1,Je}$ (see Lemma~\ref{lem_appAnSt_OberJeff} stated in Appendix~\ref{subsec_appAnSt_ObnJeff}). This yields the following stress terms that can be computed by means of the techniques provided in Appendix~\ref{subsec_appAnSt_detSurfMom},
\begin{subequations}\label{eq_specCase_stresses}
\begin{align}
 &\int_{\partial\aset{E}}\mat{S}[\vec{u}_{loc,1,Ob}]\cdot\vec{n}\diffd s = \vec{f}_1,  \hspace*{2cm}  \int_{\partial\aset{E}}\mat{S}[\vec{u}_{loc,1,Je}]\cdot\vec{n}\diffd s = \vec{0},\\
  &\int_{\partial\aset{E}}\mat{S}[\vec{u}_{loc,1,Ob}]\cdot\vec{n}\otimes\vec{y}\diffd s = \mat{0},
 \qquad \qquad \int_{\partial\aset{E}}\vec{h}_1^{const}\otimes\vec{n}+\vec{n}\otimes\vec{h}_1^{const}\diffd s = \mat{0},\\
 &\int_{\partial\aset{E}}\mat{S}[\vec{u}_{loc,1,Je}]\cdot\vec{n}\otimes\vec{y}-\mat{A}_1\cdot\vec{y}\otimes\vec{n}-\vec{n}\otimes\mat{A}_1\cdot\vec{y}\diffd s = |\aset{E}|\left(\frac8\delta \mat{M}-4(\mat{M}:\diag(\alpha_1^o,\alpha_2^o,\alpha_3^o))\mat{I}\right),
\end{align}
\end{subequations}
where $\delta=d_1d_2d_3$. For the definition of the geometry dependent scalars $\alpha_i^o$ and matrix $\mat{M}$ we refer to Appendix~\ref{subsec_appAnSt_ObnJeff}.

\subsection{Suspension model of weakly inertial ellipsoidal particles}

We combine the results from the one-particle asymptotics and deduce the macroscopic suspension description for arbitrarily shaped tracer ellipsoids (TE) that we even specify for prolate ellipsoids in Corollary~\ref{lem_specCase_ellipsoidalSusp}.

\subsubsection*{Arbitrarily shaped ellipsoids}
The particle-induced stress tensor for arbitrarily shaped ellipsoids becomes by means of  \eqref{eq_kinMod_regularStress} and \eqref{eq_specCase_stresses} 
\begin{align}
 \mat{\Sigma}^p &= \epsilon^3\frac{|\aset{E}|}{|\aset{V}|}\sum_{k=1}^{N}\left(\frac8\delta \mat{R}_0^k\cdot\mat{M}^k\cdot\mat{R}_0^{kT}-4(\mat{M}^k:\diag(\alpha_1^o,\alpha_2^o,\alpha_2^o))\mat{I}\right).\label{eq_specCase_stressArbitrary}
\end{align}
The force term $\vec{b}^p$~\eqref{eq_kinMod_singularStress} depends via the functions $\vec{f}_i^k$ (Abbreviation~\ref{abb_asymAn_1}) on the choice of the mass function~\eqref{eq_mathMod_massfcn}. With respect to the three different inertial types of tracer ellipsoids we obtain
\begin{align*}
 \vec{b}^p &= \epsilon^2 \frac{1}{|\aset{V}|}\sum_{k=1}^{N}\left(m\Re\cdot\left(\frac{\diffd}{\diffd t}\vec{v}_0^k-\frac{1}{\Fr^2}\vec{e}_g\right)+\epsilon\left(-|\aset{E}|\nabla\cdot\mat{S}[\vec{u}]^T+m\Re\frac{\diffd}{\diffd t}\vec{v}_1^k\right)\right) &&\\
 &=\epsilon^2 \phi\frac{m}{|\aset{E}|}\nabla\cdot\mat{S}[\vec{u}]^T+\epsilon^3\phi\left(\frac{m}{|\aset{E}|}\Re\E\left[\frac{\diffd}{\diffd t}\vec{v}_1\right]-\nabla\cdot\mat{S}[\vec{u}]^T\right)+\gO(\epsilon^4), &&\text{heavy TE},
 \end{align*}
 \begin{align*}
 \vec{b}^p &= \epsilon^3\frac{1}{|\aset{V}|}\sum_{k=1}^{N}\left(-|\aset{E}|\nabla\cdot\mat{S}[\vec{u}]^T+m\Re\left(\frac{\diffd}{\diffd t}\vec{v}_0^k-\frac{1}{\Fr^2}\vec{e}_g\right)\right) &&\\
 &=\epsilon^3\phi\left(\frac{m}{|\aset{E}|}-1\right)\nabla\cdot\mat{S}[\vec{u}]^T+\gO(\epsilon^5), &&\text{normal TE},\\
 \vec{b}^p &= -\epsilon^3\frac{|\aset{E}|}{|\aset{V}|}\sum_{k=1}^{N}\nabla\cdot\mat{S}[\vec{u}]^T=-\epsilon^3\phi\nabla\cdot\mat{S}[\vec{u}]^T, &&\text{light weighted TE},
\end{align*}
where $\phi=|\aset{E}|N/|\aset{V}|$ denotes the volume fraction. We use here the asymptotic approximation $\diffd\vec{v}_0/\diffd t=\diffd \vec{u}/\diffd t=\Re^{-1}\nabla\cdot\mat{S}[\vec{u}]^T+\Fr^{-2}\vec{e}_g+\gO(\epsilon^2)$ (cf.\ Theorem~\ref{thm_kinMod_centralthm}) and Assumption~\ref{assum_mathMod_assumptions1}, (A8) that allows us to  evaluate the gradient of the velocity at $\vec{x}$ instead of the center of mass of the corresponding particle. 

\begin{remark}[Macroscopic definition of the local quantities]
In the derivation of $\vec{b}^p$ we used the law of large numbers to replace the discrete arithmetic mean of $N$ particles with the expectation $\E[.]$, which is in consistency with Assumption~\ref{assum_kinMod_assumptions1}. This transformation was especially done to achieve a description that is easily comparable with the results presented in literature, see e.g.\ \cite{Batchelor1970,Leal1973,Phan-Thien1991}. However, it involves a small technical issue: While the ensemble average needs only the well-defined velocity fields of each particle $\vec{v}_1^k$ in $\aset{V}(\vec{x},t)$, the expression $\E[\diffd \vec{v}_1/\diffd t](\vec{x},t)$ presupposes the definition of an underlying random velocity field $\vec{v}_1:\Omega\times\R^+_0\times\aset{W}\to\R^3$ whose characteristic for the $k$th particle $\vec{v}_1^k(t,\omega) = \vec{v}_1(\vec{c}_0^k(t,\omega),t,\omega)$ solves almost surely the corresponding ODE~\eqref{eq_specCase_linode1}. Having the meaning of the averages of the particle related quantities in mind we stick to the presented notation for reasons of readability. 
\end{remark}

\subsubsection*{Prolate ellipsoids}
For a prolate ellipsoid the lengths of the semi axes satisfy $d_1>d_2=d_3>0$. It is convenient to introduce the aspect ratio $a_r=d_1/d_2>1$ and the parameter $\nu=(a_r^2-1)(a_r^2+1)^{-1}$. Its rotational behavior can be expressed in terms of the main director $\vec{p}_1$, $(\mat{R}_0)_{ij}=\vec{e}_i\cdot\vec{p}_j$ as the angular velocity $\vec{\omega}_0$~\eqref{eq_specCase_rotode1} becomes
\begin{subequations}\label{eq_Jeff}
\begin{align}\nonumber
 \vec{\omega}_0 &= 0.5\nabla_{\vec{x}}\times\vec{u}_0 -\nu (\vec{p}_1\cdot\mat{E}[\vec{u}_0]\cdot\vec{p}_3)\vec{p}_2+\nu (\vec{p}_2\cdot\mat{E}[\vec{u}_0]\cdot\vec{p}_3)\vec{p}_3 \\
 &= 0.5\nabla_{\vec{x}}\times\vec{u}_0+\nu\vec{p}_1\times\mat{E}[\vec{u}_0]\cdot\vec{p}_1,
\end{align}
due to the orthonormality of $\{\vec{p}_1,\vec{p}_2,\vec{p_3}\}$. In combination with 
\begin{align}
 \frac{\diffd}{\diffd t}\vec{p}_1 = \vec{\omega}_0\times\vec{p}_1,
\end{align}
\end{subequations}
\eqref{eq_Jeff} is often called Jeffery's equation in remembrance of \cite{Jeffery1922}. In the asymptotical context the equation was derived for normal tracer ellipsoids in the work of Junk \& Illner \cite{Junk2007}. As for the particle-stresses $\mat{\Sigma}^p$ \eqref{eq_specCase_stressArbitrary}, it can be shown by a technical but straight forward calculation \cite{Vibe2014} that
\begin{align*}
 \frac4\delta\mat{R}_0\cdot\mat{M}\cdot\mat{R}_0^T = a_1\vec{p}_1\otimes\vec{p}_1\otimes\vec{p}_1\otimes\vec{p}_1:\mat{E}[\vec{u}_0]+a_2(\vec{p}_1\otimes\vec{p}_1\cdot\mat{E}[\vec{u}_0]+\mat{E}[\vec{u}_0]\cdot\vec{p}_1\otimes\vec{p}_1)+a_3\mat{E}[\vec{u}_0],
\end{align*}
with $\delta=d_1d_2^2$ and the geometry dependent $\mat{M}$ and $a_i$ as given in Abbreviation~\ref{abb_specCase_BigAbb} (cf.\ Appendix~\ref{subsec_appAnSt_ObnJeff}). The expression holds for every particle $k$. Using Assumption~\ref{assum_mathMod_assumptions1} and shifting the isotropic part $(\mat{M}^k:\diag(\alpha_1^o,\alpha_2^o,\alpha_2^o))\mat{I}$ into the pressure of the Newtonian stresses, we can state the particle-stresses in the well-known form with volume fraction $\phi$ and deformation gradient tensor $\mat{E[\vec{u}]}$ (see e.g.\ \cite{Leal1973}),
\begin{align}
 \mat{\Sigma}^p &=\epsilon^32\phi \widetilde{\mat{\Sigma}}^p,\nonumber\\
 \widetilde{\mat{\Sigma}}^p&= a_1\E[\vec{p}_1\otimes\vec{p}_1\otimes\vec{p}_1\otimes\vec{p}_1]:\mat{E}[\vec{u}]+a_2(\E[\vec{p}_1\otimes\vec{p}_1]\cdot\mat{E}[\vec{u}]+\mat{E}[\vec{u}]\cdot\E[\vec{p}_1\otimes\vec{p}_1])+a_3\mat{E}[\vec{u}].\label{eq_specCase_defiBatchJeffStress}
\end{align}

\begin{abbreviation}[Geometrical parameters of prolate ellipsoids for particle-stresses]\label{abb_specCase_BigAbb}
The geometrical parameters $a_1, a_2, a_3$ of \eqref{eq_specCase_defiBatchJeffStress} are (cf.\ \cite{Giesekus1962, Phan-Thien1991})
\begin{align*}
 a_1&=\frac{4}{d_1d_2^2}\left(\frac{\gamma_1^o}{4\gamma_2^o\beta_1^od_2^2}-2b_1-b_2\right), \hspace*{22.6ex} a_2=\frac{4b_1}{d_1d_2^2}, \qquad  \qquad  \qquad \quad  a_3=\frac{4b_2}{d_1d_2^2},\\
 b_1&=\frac{a_r^2(2a_r^2\theta-\theta-1)}{2d_1d_2^2(a_r^4-1)\beta_2^o(d_2^2\alpha_2^o+d_1^2\alpha_1^o)}-b_2, \hspace*{16ex} b_2=\frac{1}{4\beta_1^od_2^2},\\
 \theta&= \frac{1}{2a_r(a_r^2-1)^{1/2}}\ln\frac{a_r+(a_r^2-1)^{1/2}}{a_r-(a_r^2-1)^{1/2}},\hspace*{17ex}\chi^o=\frac{2a_r\theta}{d_2},\\
  \alpha_1^o&=\frac{1}{d_1d_2^2}\frac{2}{a_r^2-1}\left(a_r^2\theta-1\right), \hspace*{29ex} \alpha_2^o=\frac{1}{d_1d_2^2}\frac{a_r^2}{a_r^2-1}\left(-\theta+1\right),\\
 \beta_1^o&=\frac{1}{d_1d_2^4}\frac{a_r^2}{4(a_r^2-1)^2}\left(3\theta+2a_r^2-5\right),\hspace*{20ex} \beta_2^o=\frac{1}{d_1d_2^4}\frac{1}{(a_r^2-1)^2}\left(-3a_r^2\theta+a_r^2+2\right),\\
 \gamma_1^o&=\frac{1}{d_1d_2^2}\frac{a_r^2}{4(a_r^2-1)^2}\left(-(4a_r^2-1)\theta+2a_r^2+1\right), \hspace*{10.2ex}\gamma_2^o=\frac{1}{d_1d_2^2}\frac{a_r^2}{(a_r^2-1)^2}\left((2a_r^2+1)\theta-3\right).
\end{align*}
In addition,
 \begin{align*}
 \mat{M}=\begin{bmatrix}A&H&G^\star\\H^\star&B&F\\G&F^\star&C\end{bmatrix}
 \end{align*}
where the coefficients are given by
\begin{align*}
&\hspace*{30ex}A=-\frac{2\gamma_1^o A_{11}-\gamma_2^o(A_{22}-A_{33})}{6(\gamma_2^{o2}+2\gamma_1^o\gamma_2^o)},\\
B&=-\frac{\gamma_2^o(2A_{22}-A_{33})-\gamma_1^oA_{11}}{6(\gamma_2^{o2}+2\gamma_1^o\gamma_2^o)},\hspace*{30.5ex}C=-\frac{\gamma_2^o(2A_{33}-A_{22})-\gamma_1^oA_{11}}{6(\gamma_2^{o2}+2\gamma_1^o\gamma_2^o)},\\
 F&=-\frac{\alpha_2^o(A_{23}+A_{32})/2-d_2^2\beta_1^o(A_{32}-A_{23})/2}{4\beta_1^od_2^2\alpha_2^o},\quad F^\star=-\frac{\alpha_2^o(A_{23}+A_{32})/2+d_2^2\beta_1^o(A_{32}-A_{23})/2}{4\beta_1^od_2^2\alpha_2^o},\\
 G&=-\frac{\alpha_2^o(A_{13}+A_{31})/2-d_1^2\beta_2^o(A_{13}-A_{31})/2}{2\beta_2^o(d_1^2\alpha_1^o+d_2^2\alpha_2^o)},\quad G^\star=-\frac{\alpha_1^o(A_{13}+A_{31})/2+d_2^2\beta_2^o(A_{13}-A_{31})/2}{2\beta_2^o(d_1^2\alpha_1^o+d_2^2\alpha_2^o)},\nonumber\\
 H&=-\frac{\alpha_1^o(A_{12}+A_{21})/2-d_2^2\beta_2^o(A_{21}-A_{12})/2}{2\beta_2^o(d_1^2\alpha_1^o+d_2^2\alpha_2^o)},\quad H^\star=-\frac{\alpha_2^o(A_{12}+A_{21})/2+d_1^2\beta_2^o(A_{21}-A_{12})/2}{2\beta_2^o(d_1^2\alpha_1^o+d_2^2\alpha_2^o)},
\end{align*}
(cf.\ Lemma~\ref{lem_appAnSt_OberJeff}) with $A_{ij}=(\mat{A}_1)_{ij}$, $\mat{A}_1=\nu B((0, -\vec{p}_1\cdot \mat{E}[\vec{u}_0]\cdot \vec{p}_3, \vec{p}_1\cdot \mat{E}[\vec{u}_0]\cdot \vec{p}_2))-\mat{R}_0^T\cdot \mat{E}[\vec{u}_0]\cdot \mat{R}_0$.
\end{abbreviation}

\begin{cor}[Asymptotical model of a suspension with prolate weakly inertial tracer ellipsoids]\label{lem_specCase_ellipsoidalSusp}
Let the assumptions of Theorem~\ref{thm_kinMod_centralthm} be valid and let the volume fraction $\phi=|\aset{E}|N/|\aset{V}|$ be independent of $\vec{x}$, then the suspension of prolate ellipsoidal particles is macroscopically described up to an error of $\mathcal{O}(\epsilon^4)$ by
 \begin{align*}
 \Re\left(\partial_t\vec{u}+(\vec{u}\cdot\nabla_{\vec{x}})\vec{u}\right)&=\eta\nabla_{\vec{x}}\cdot\mat{S}[\vec{u}]^T+\epsilon^32\phi\nabla\cdot\widetilde{\mat{\Sigma}}^{pT}+\vec{f}^{p}+\Re\Fr^{-2}\vec{e}_g,\quad 
 \nabla_{\vec{x}}\cdot\vec{u}=0, \qquad \text{in }\Omega\times\R^+,
\end{align*}
supplemented with appropriate initial and boundary conditions. It is $\widetilde{\mat{\Sigma}}^p$ according to \eqref{eq_specCase_defiBatchJeffStress} and
\begin{align*}
 \eta=\begin{cases}
      1+\epsilon^2\phi\frac{m}{|\aset{E}|}-\epsilon^3\phi,& \text{heavy TE},\\
      1+\epsilon^3\phi\left(\frac{m}{|\aset{E}|}-1\right),& \text{normal TE},\\
      1-\epsilon^3\phi, & \text{light weighted TE},
     \end{cases}
     \qquad \qquad
 \vec{f}^{p}=\begin{cases}
                \epsilon^3\phi\frac{m}{|\aset{E}|}\Re\E\left[\frac{\diffd}{\diffd t}\vec{v}_1\right],&\text{heavy TE},\\
                \vec{0}, & \text{else}.
               \end{cases}
\end{align*}
\end{cor}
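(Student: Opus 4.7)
The plan is to specialize the macroscopic model of Theorem~\ref{thm_kinMod_centralthm} to the prolate ellipsoidal geometry by assembling the ingredients already prepared in Section~\ref{sec_4_specCase}. I would substitute the arbitrarily-shaped-ellipsoid expression \eqref{eq_specCase_stressArbitrary} for $\mat{\Sigma}^p$ into the momentum equation \eqref{eq_kinMod_kinmodel_full}. For the prolate case, I invoke the algebraic identity
\begin{align*}
 \tfrac{4}{\delta}\mat{R}_0\cdot\mat{M}\cdot\mat{R}_0^T = a_1\,\vec{p}_1\otimes\vec{p}_1\otimes\vec{p}_1\otimes\vec{p}_1:\mat{E}[\vec{u}_0]+a_2\bigl(\vec{p}_1\otimes\vec{p}_1\cdot\mat{E}[\vec{u}_0]+\mat{E}[\vec{u}_0]\cdot\vec{p}_1\otimes\vec{p}_1\bigr)+a_3\mat{E}[\vec{u}_0]
\end{align*}
stated above \eqref{eq_specCase_defiBatchJeffStress}, apply the homogeneity and ergodicity Assumption~\ref{assum_kinMod_assumptions1}, (A8)--(A9), so that the empirical sum $|\aset{V}|^{-1}\sum_k(\cdot)^k$ becomes the expectation $\E[\cdot]$, and absorb the isotropic part $(\mat{M}^k\!:\!\diag(\alpha_1^o,\alpha_2^o,\alpha_2^o))\mat{I}$ into the pressure. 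This produces exactly $\epsilon^3\,2\phi\,\widetilde{\mat{\Sigma}}^p$ with $\widetilde{\mat{\Sigma}}^p$ as in \eqref{eq_specCase_defiBatchJeffStress}.

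Next, I would unpack the body force $\vec{b}^p$ in each of the three inertial regimes, using the case-distinguished expressions for $\vec{b}^p$ already derived earlier in this section. Each expression naturally splits into (i) terms proportional to $\nabla_{\vec{x}}\cdot\mat{S}[\vec{u}]^T$ and (ii) genuinely inertial terms. Since $\phi$ is constant in $\vec{x}$ by hypothesis, the type-(i) contributions commute with the divergence and can be merged with the original $\nabla_{\vec{x}}\cdot\mat{S}[\vec{u}]^T$ from \eqref{eq_kinMod_kinmodel_full} into a single effective viscous stress $\eta\,\nabla_{\vec{x}}\cdot\mat{S}[\vec{u}]^T$; reading off the prefactor in each case gives the three stated expressions for $\eta$. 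The type-(ii) terms form $\vec{f}^p$; they vanish to the claimed order for normal and light-weighted TE, while for heavy TE only the acceleration $m\Re\,\diffd\vec{v}_1/\diffd t$ survives, yielding $\vec{f}^p=\epsilon^3\phi(m/|\aset{E}|)\Re\,\E[\diffd\vec{v}_1/\diffd t]$.

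The key bookkeeping step, which I would treat carefully, is the systematic use of the asymptotic identity $\diffd\vec{v}_0/\diffd t=\diffd\vec{u}/\diffd t=\Re^{-1}\nabla_{\vec{x}}\cdot\mat{S}[\vec{u}]^T+\Fr^{-2}\vec{e}_g+\gO(\epsilon^2)$ from Theorem~\ref{thm_kinMod_centralthm}: this is what converts the kinematic accelerations appearing in the heavy and normal TE expressions into a viscous piece (contributing to $\eta$) plus a gravitational piece that merges with the $\Re\Fr^{-2}\vec{e}_g$ already present on the right-hand side, leaving no spurious source term. The only genuinely nontrivial obstacle is the verification of the prolate-ellipsoid identity for $(4/\delta)\mat{R}_0\cdot\mat{M}\cdot\mat{R}_0^T$ displayed above; this is a lengthy but straightforward linear-algebra computation using the explicit $\mat{M}$-coefficients of Abbreviation~\ref{abb_specCase_BigAbb} together with the symmetry $d_2=d_3$ and the orthonormality of the director triad, and is the one place where substantial calculation is needed (and is deferred to \cite{Vibe2014}). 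Once that identity is granted, the remainder of the proof is a term-by-term reorganization matching the claimed form.
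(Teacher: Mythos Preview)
Your proposal is correct and mirrors the paper's own treatment: the corollary is not given a standalone proof but is assembled from the expressions for $\mat{\Sigma}^p$ and $\vec{b}^p$ derived just above it, together with the prolate identity cited from \cite{Vibe2014} and the asymptotic substitution for $\diffd\vec{v}_0/\diffd t$. One small correction: the gravitational piece produced by the substitution $\diffd\vec{v}_0/\diffd t=\Re^{-1}\nabla_{\vec{x}}\cdot\mat{S}[\vec{u}]^T+\Fr^{-2}\vec{e}_g+\gO(\epsilon^2)$ cancels the $-\Fr^{-2}\vec{e}_g$ already inside $\vec{k}_0$ (cf.\ Abbreviation~\ref{abb_asymAn_1}), rather than merging with the ambient $\Re\Fr^{-2}\vec{e}_g$ on the right-hand side of \eqref{eq_kinMod_kinmodel_full}; the net effect is the same, so this does not affect your argument.
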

\newpage
\begin{remark}[Properties of additional inertia related forces in suspension model]\hfill
\begin{itemize}
 \item[1)] Compared to the classical results in literature, e.g.~\cite{Leal1973}, our model includes a change in the overall Newtonian stress of the fluid and additionally, in the case of heavy tracer ellipsoids, an extra body force. The source of these effects can be identified in~\eqref{eq_kinMod_singularStress} to originate from the one-particle associated functions $\vec{f}_i^k$ for $i=1,2$, which in turn have the role of source terms in the ODEs for $\vec{c}_i^k$ according to the solvability conditions~\eqref{eq_asymAn_solvCondAll}, clearly seen in the case of ellipsoidal particles~\eqref{eq_specCase_linode1}. This implies that the additional macroscopical effects have their origin in the deviation of the particle center of mass from the streamlines of the undisturbed fluid, which is given by $\vec{c}^k-\vec{c}_0^k\approx\epsilon\vec{c}_1^k+\epsilon^2\vec{c}_2^k$. This dependence is also clearly observed for ellipsoidal particles. In the general case the solvability conditions for the linear and angular velocity may not decouple, but still the $\vec{f}_i$ terms are associated to the ODE-system for the correction of particle movement. Thus throughout this text we refer to these terms in Theorem~\ref{thm_kinMod_centralthm} as originating from the small disturbance.
 \item[2)] In the special case $\rho\equiv1$, we see by ~\eqref{eq_mathMod_massfcn} that $m/|\aset{E}|\equiv 1$ and the contribution due to the relative motion vanishes, similarly to the classical results in~\cite{Batchelor1970}.
\end{itemize}
\end{remark}

\begin{remark}[Splitting of the inner force]\label{rem_specCase_motSplitting}
 As mentioned in Section~\ref{subsec_kinMod_general}, the splitting approach \eqref{eq_split} for the overall inner force into a force generated by surface stresses and a volume force is only needed, since the linearity of the mean value with respect to differentiation is not generally valid for volume-based averages. Here, we want to illustrate this fact based on our results: As we see from Theorem~\ref{thm_kinMod_centralthm}, the volume average for the surface stresses yields (apart from the Newtonian stresses) only $\mat{\Sigma}^p$ that can analytically be simplified to the classical particle-stress term of Corollary~\ref{lem_specCase_ellipsoidalSusp}. This is a contribution of $\gO(\epsilon^3)$ which is dependent neither on the local coordinates $\vec{y}$ of a single particle --and thus cannot generate an effective force of $\gO(\epsilon^2)$--, nor on the mass function $\alpha_{mass}$ --and thus cannot change despite different inertia models. In contrast, the average of the volume force $\E[\vec{b}]^{\aset{V}}$ generates a contribution of $\gO(\epsilon^2)$ and changes accordingly to the choice of the inertia model.
\end{remark}


\section{Conclusion}\label{sec_5_conc}
In this paper we presented a model for a suspension of small particles, which involves inertial effects. The latter were identified as the ability of particles to deviate from the streamlines of the surrounding fluid as a consequence of different scaling of the particle momentum balance. This was achieved by an appropriate scaling of the particle mass, respectively the density ratio. To keep the results of~\cite{Junk2007}, we retained the basic asymptotic approach and restricted our choice of the mass function accordingly. Having characterized the microscale behavior of the particles, we modeled the particle suspension following~\cite{Batchelor1970}, supplementing a strategy to take additional forces into account, which have their origin in the relative motion of the particles. Afterwards we gave different models for the corresponding inertial particle regimes. These models are composed of incompressible Navier-Stokes-like equations with modified, non-Newtonian stress and force. Besides the classical part in the stress, we found a modification entering the overall viscosity of the fluid. To illustrate the general results, we applied the theory to the classical example of symmetrical ellipsoidal particles.

One practical restriction of our approach is the fact that we cannot give a recipe on how to choose an inertial regime for a given particle and fluid, since our model of inertia was formulated by means of an asymptotic behavior of the particle. For a concrete situation, one has to observe the particle motion and then, based on the intensity of deviation of the particle motion from the fluid streamlines, decide which regime is applicable.


\appendix
\renewcommand{\theequation}{A.\arabic{equation}}
\renewcommand{\thetable}{A.\arabic{table}}
\renewcommand{\thefigure}{A.\arabic{figure}}
\setcounter{equation}{0} \setcounter{figure}{0}
\newtheorem{lemA}{Lemma}[section]

\section{Analytical statements for ellipsoidal geometry}\label{sec_appAnSt}
In the appendix we summarize some fundamental results of different works for ellipsoidal particles, the individual results are from Oberbeck \cite{Oberbeck1876}, Edwardes \cite{Edwardes1893}, Jeffery \cite{Jeffery1922} and Junk \& Illner \cite{Junk2007}. Since we use especially the solutions of Oberbeck and Jeffery in the derivation of the suspension model for ellipsoidal particles in Section~\ref{sec_4_specCase}, we provide here the relevant arguments. In Appendix~\ref{subsec_appAnSt_ObnJeff} we introduce all relevant functions needed to formulate the solutions of Oberbeck and Jeffery in Lemma~\ref{lem_appAnSt_OberJeff}, also analyzing the decay properties of the involved quantities in Lemma~\ref{lem_asym_props_of_coefficients}. In Appendix~\ref{subsec_appAnSt_detSurfMom} we briefly present the necessary steps for the analytical computation of the surface moments arising in the solvability conditions of Lemma~\ref{lem_asymAna_solvCond} by using Lemma~\ref{lem_appAnSt_OberJeff}.

\subsection{Oberbeck and Jeffery solutions}\label{subsec_appAnSt_ObnJeff}
As stated in Section~\ref{sec_4_specCase}, an ellipsoid is a set $\aset{E}=\mat{D}\aset{B}_1$ with $\mat{D}=\diag(d_1,d_2,d_3)$, $d_i>0$ and unit ball $\aset{B}_1$ in $\R^3$. This implies $\aset{E}=\{\vec{y}\in\R^3|\,\|\mat{D}^{-1}\vec{y}\|^2<1\}$. Consider the function $\lambda:\R^3\setminus\aset{E}\to\R_0^+$ defined as
\begin{subequations}\label{eq_appAnSt_ansatzForObJeff}
\begin{align}
 \frac{y_1^2}{d_1^2+\lambda(\vec{y})}+\frac{y_2^2}{d_2^2+\lambda(\vec{y})}+\frac{y_3^2}{d_3^2+\lambda(\vec{y})}=1\label{eq_appAnSt_root}
\end{align}
whose existence and regularity are guaranteed by the implicit function theorem. The formulation of solutions for the Stokes problems in $\R^3\setminus\aset{E}$ are based on the following geometry associated functions 
\begin{align}
 \delta(\lambda)=\det(\mat{D}_\lambda)^{1/2},\qquad \chi(\lambda)=\int_\lambda^\infty\delta(s)^{-1}\,\diffd s,\qquad \alpha_j(\lambda)&=\int_\lambda^\infty(d_j^2+s)^{-1}\delta(s)^{-1}\,\diffd s,\label{eq_appAnSt_ansatzForObJeffExplicit}\\
 \beta_j(\lambda)=\int_\lambda^\infty(d_j^2+s)\delta(s)^{-3}\,\diffd s,\qquad \gamma_j(\lambda)&=\int_\lambda^\infty(d_j^2+s)s\delta(s)^{-3}\,\diffd s\nonumber
\end{align}
\end{subequations}
for $j=1,2,3$, with $\mat{D}_{\lambda}=\mat{D}_{\lambda}(\lambda)=\diag(d_i^2+\lambda,i=1,2,3)$. When $\alpha_j,\beta_j,\gamma_j$ and $\chi$ are evaluated at zero, we abbreviate the function value with the index $^o$, e.g.\ $\chi^o=\int_0^\infty\delta(s)^{-1}\,\diffd s$. Additionally, we introduce $\psi_j(\vec{y})=\beta_j(\lambda(\vec{y}))y_ky_\ell$, where $(j,k,\ell)$ is a permutation of $(1,2,3)$, and $\omega(\vec{y})=\int_{\lambda(\vec{y})}^\infty\delta(s)^{-1}f_{el}(\vec{y},s)\,\diffd s$, where $f_{el}(\vec{y},\lambda)=\vec{y}\cdot\mat{D}_\lambda^{-1}\cdot\vec{y}-1$.  In the following some derivatives of the above functions are needed.  We set $\mu(\vec{y},\lambda)=(\vec{y}\cdot\mat{D}_\lambda^{-2}\cdot\vec{y})^{-1}$, in other words $\partial_\lambda f_{el}=-\mu^{-1}$. It follows for $\vec{y}\in\R^3\setminus\aset{E}$
\begin{subequations}\label{eq_appAnSt_derivatives}
\begin{align}
 \partial_{\vec{y}}\chi(\lambda(\vec{y}))&=-2\frac{\mu}{\delta}\vec{y}\cdot\mat{D}_\lambda^{-1}, \qquad \qquad \qquad \qquad \qquad \partial_{\vec{y}}\omega=2\vec{y}\cdot\diag(\alpha_1,\alpha_2,\alpha_3),\\
 \partial_{\vec{y}\vec{y}}\chi(\lambda(\vec{y}))&=-2\frac{\mu}{\delta}\left(\mat{D}_\lambda^{-1}-2\mu\left(\mat{D}_\lambda^{-2}\cdot\vec{y}\otimes\mat{D}_\lambda^{-1}\cdot\vec{y}+\mat{D}_\lambda^{-1}\cdot\vec{y}\otimes\mat{D}_\lambda^{-2}\cdot\vec{y}\right)\right)\label{eq_appAnSt_d2chi}\\
 &\qquad-2\frac{\mu^2}{\delta}\left(-\tr(\mat{D}_\lambda^{-1})+4\mu\vec{y}\cdot\mat{D}_\lambda^{-3}\cdot\vec{y})\mat{D}_\lambda^{-1}\cdot\vec{y}\otimes\mat{D}_\lambda^{-1}\cdot\vec{y}\right),\nonumber\\
 \partial_{\vec{y}\vec{y}}\omega&=2\diag(\alpha_1,\alpha_2,\alpha_3)-4\frac{\mu}{\delta}\left(\mat{D}_\lambda^{-1}\cdot\vec{y}\otimes\mat{D}_\lambda^{-1}\cdot\vec{y}\right),\label{eq_appAnSt_d2omega}\\
 \partial_{y_iy_jy_k}\omega&=-4\frac{\mu}{\delta}\left(\frac{\delta_{jk}y_i}{(d_i^2+\lambda)(d_j^2+\lambda)}+\frac{\delta_{ij}y_k+\delta_{ik}y_j}{(d_j^2+\lambda)(d_k^2+\lambda)}\right)\\
 &\quad+8\frac{\mu^2}{\delta}\frac{y_iy_jy_k\left((d_j^2+\lambda)^{-1}+(d_k^2+\lambda)^{-1}+(d_i^2+\lambda)^{-1}+0.5\tr(\mat{D}_\lambda^{-1})-2\mu\vec{y}\cdot\mat{D}_\lambda^{-3}\cdot\vec{y}\right)}{(d_i^2+\lambda)(d_j^2+\lambda)(d_k^2+\lambda)}.\nonumber
\end{align}
Let $(i,j,k)$ be an even permutation of $(1,2,3)$, the derivatives of $\psi_i$ read as:
\begin{align}
 \partial_{y_\ell}\psi_i&=-2\frac{\mu}{\delta^3}(\mat{D}_\lambda)_{ii}(\mat{D}_\lambda^{-1}\cdot\vec{y})_\ell y_jy_k+\beta_i(\delta_{\ell j}y_k+\delta_{\ell k}y_j),\\
 \partial_{y_my_\ell}\psi_i&=\beta_i(\delta_{\ell j}\delta_{km}+\delta_{\ell k}\delta_{jm})-2y_jy_k\frac{\mu}{\delta^3}\left(2\mu(\mat{D}_\lambda^{-1}\cdot\vec{y})_m(\mat{D}_\lambda^{-1}\cdot\vec{y})_\ell+(\mat{D}_\lambda)_{ii}(\mat{D}_\lambda^{-1})_{\ell m}\right)\label{eq_appAnSt_d2psi}\\
 &\quad-2\frac{\mu}{\delta^3}(\mat{D}_\lambda)_{ii}\left((\mat{D}_\lambda^{-1}\cdot\vec{y})_m(\delta_{\ell j}y_k+\delta_{\ell k}y_j)+(\mat{D}_\lambda^{-1}\cdot\vec{y})_\ell(\delta_{jm}y_k+\delta_{km}y_j)\right)\nonumber\\
 &\quad+4y_jy_k\frac{\mu^2}{\delta^3}(\mat{D}_\lambda)_{ii}\left((\mat{D}_\lambda^{-2}\cdot\vec{y})_\ell(\mat{D}_\lambda^{-1}\cdot\vec{y})_m+(\mat{D}_\lambda^{-2}\cdot\vec{y})_m(\mat{D}_\lambda^{-1}\cdot\vec{y})_\ell\right)\nonumber\\
 &\quad-2y_jy_k\frac{\mu^2}{\delta^3}(\mat{D}_\lambda)_{ii}(4\mu\vec{y}\cdot\mat{D}_\lambda^{-3}\cdot\vec{y}-3\tr(\mat{D}_\lambda^{-1}))(\mat{D}_\lambda^{-1}\cdot\vec{y})_\ell(\mat{D}_\lambda^{-1}\cdot\vec{y})_m.\nonumber
\end{align}
\end{subequations}
From~\eqref{eq_appAnSt_d2chi}, \eqref{eq_appAnSt_d2omega} and \eqref{eq_appAnSt_d2psi} it follows
\begin{subequations}\label{eq_appAnSt_laplace}
\begin{align}
 \Delta_{\vec{y}}\chi&=-2\frac{\mu}{\delta}(\tr(\mat{D}_\lambda^{-1})-4\mu\vec{y}\cdot\mat{D}_\lambda^{-3}\cdot\vec{y}+(-\tr(\mat{D}_\lambda^{-1})+4\mu\vec{y}\cdot\mat{D}_\lambda^{-3}\cdot\vec{y}))=0,\label{eq_appAnSt_laplace_chi}\\
 \Delta_{\vec{y}}\psi_i&=-4\frac{\mu}{\delta^3}(\mat{D}_\lambda)_{ii}y_jy_k(\tr(\mat{D}_\lambda^{-1})-(\mat{D}_\lambda^{-1})_{ii})-2\frac{\mu}{\delta^3}y_jy_k(2+(\mat{D}_\lambda)_{ii}\tr(\mat{D}_\lambda^{-1}))\label{eq_appAnSt_laplace_psi}\\
 &\quad+8\frac{\mu^2}{\delta^3}(\mat{D}_\lambda)_{ii}y_jy_k\vec{y}\cdot\mat{D}_\lambda^{-3}\cdot\vec{y}-2\frac{\mu}{\delta^3}(\mat{D}_\lambda)_{ii}y_jy_k(4\mu\vec{y}\cdot\mat{D}_\lambda^{-3}\cdot\vec{y}-3\tr(\mat{D}_\lambda^{-1}))=0,\nonumber\\
 \Delta_{\vec{y}}\omega&=-4\left(-\frac12\int_{\lambda(\vec{y})}^\infty\frac{\tr(\mat{D}_s^{-1})}{\delta}\,\diffd s+\frac1\delta\right)=-4\left(\int_{\lambda(\vec{y})}^\infty\partial_s\frac1\delta\,\diffd s+\frac1\delta\right)=0\label{eq_appAnSt_laplace_omega},
\end{align}
\end{subequations}
since $\partial_\lambda\delta^{-1}=-0.5\delta^{-1}\tr(\mat{D}_\lambda^{-1})$ and $\lim_{s\to\infty}\delta^{-1}=0$. This is an important feature of the functions, which will be used in Lemma~\ref{lem_appAnSt_OberJeff}. Next, we present some asymptotic properties of these functions for $\|\vec{y}\|\to\infty$.

\begin{lemA}[Asymptotical properties]\label{lem_asym_props_of_coefficients}
 The geometry associated functions defined in~\eqref{eq_appAnSt_ansatzForObJeff} fulfill the following properties for $r=\|\vec{y}\|$, $r\to\infty$:
\begin{align*}
\lambda\sim r^2,\qquad \alpha_i\sim\frac23r^{-3},\qquad \beta_i\sim\frac25r^{-5},\qquad \gamma_i\sim\frac23r^{-3},\qquad \chi\sim2r^{-1}, \qquad
\delta \sim r^3,\qquad \mu\sim r^2,
\end{align*}
where the notation $f\sim\phi$ stands for $f=\phi+\kO(\phi)$.
\end{lemA}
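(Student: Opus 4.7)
The plan is to first establish the asymptotic behavior of $\lambda(\vec{y})$ as $r=\|\vec{y}\|\to\infty$ from its implicit definition \eqref{eq_appAnSt_root}, and then use this to extract leading-order behavior from each of the elementary integrals defining $\delta,\chi,\alpha_j,\beta_j,\gamma_j$ together with the algebraic expression for $\mu$.

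First I would show $\lambda\sim r^2$. Dividing \eqref{eq_appAnSt_root} by $1$ and rewriting gives $\sum_i y_i^2/(d_i^2+\lambda)=1$. For large $r$ the sum cannot stay bounded unless $\lambda\to\infty$, so expanding $(d_i^2+\lambda)^{-1}=\lambda^{-1}(1-d_i^2/\lambda+\gO(\lambda^{-2}))$ and summing yields $r^2/\lambda=1+\gO(\lambda^{-1})$, hence $\lambda = r^2+\gO(1)$. From this the algebraic assertions follow immediately: $\delta(\lambda)=\prod_i(d_i^2+\lambda)^{1/2}=\lambda^{3/2}(1+\gO(\lambda^{-1}))\sim r^3$, and $\mu^{-1}=\sum_i y_i^2/(d_i^2+\lambda)^2=\lambda^{-2}\sum_i y_i^2(1+\gO(\lambda^{-1}))=r^2/\lambda^2(1+\gO(\lambda^{-1}))\sim r^{-2}$, so $\mu\sim r^2$.

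Next I would handle the integrals by the same mechanism: in each integrand, for large $s$ we may replace $(d_j^2+s)$ by $s(1+\gO(s^{-1}))$ and $\delta(s)^{-1}$ by $s^{-3/2}(1+\gO(s^{-1}))$. Thus
\begin{align*}
\chi(\lambda) &= \int_\lambda^\infty s^{-3/2}(1+\gO(s^{-1}))\,\diffd s = 2\lambda^{-1/2}(1+\gO(\lambda^{-1})),\\
\alpha_j(\lambda) &= \int_\lambda^\infty s^{-5/2}(1+\gO(s^{-1}))\,\diffd s = \tfrac{2}{3}\lambda^{-3/2}(1+\gO(\lambda^{-1})),\\
\beta_j(\lambda) &= \int_\lambda^\infty s^{-7/2}(1+\gO(s^{-1}))\,\diffd s = \tfrac{2}{5}\lambda^{-5/2}(1+\gO(\lambda^{-1})),\\
\gamma_j(\lambda) &= \int_\lambda^\infty s^{-5/2}(1+\gO(s^{-1}))\,\diffd s = \tfrac{2}{3}\lambda^{-3/2}(1+\gO(\lambda^{-1})).
\end{align*}
Substituting $\lambda\sim r^2$ converts each bound into the desired rate in $r$.

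No step is really an obstacle here; the only care that is needed is justifying the absorption of the $(1+\gO(s^{-1}))$ factor into the error term of the integrals, which follows because the correction integrals are of strictly higher decay order and all integrands are continuous and integrable on $[\lambda,\infty)$ for $\lambda>-\min_i d_i^2$. The statements for $\mu$ and $\delta$ are purely algebraic once $\lambda\sim r^2$ is known, so the entire lemma reduces to one implicit function computation plus four elementary integral asymptotics.
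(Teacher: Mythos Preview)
Your proof is correct and follows the same overall structure as the paper: establish $\lambda\sim r^2$ from the implicit equation, deduce the algebraic asymptotics for $\delta$ and $\mu$, then handle the four integral functions. The execution differs slightly: the paper pins down $\lambda$ via the two-sided bound $\min_i d_i^2\le r^2-\lambda\le \max_i d_i^2$ (obtained directly from \eqref{eq_appAnSt_root}), and for each integral it splits $\int_\lambda^\infty$ into $\int_\lambda^{r^2}+\int_{r^2}^\infty$ and bounds the two pieces separately using an elementary inequality of the form $1-(a/s+1)^{-\alpha}\le \alpha a/s$. Your route via asymptotic expansion of the integrands is more compact and equally rigorous once you note (as you do) that the $\gO(s^{-1})$ corrections are uniform in $s$ with constants depending only on the semi-axes $d_i$; the paper's splitting buys explicit error constants but is otherwise just a different bookkeeping of the same estimate.
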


\begin{proof}
 From the definition of $\lambda(\vec{y})$, it holds
 \begin{align*}
  1=\vec{y}\cdot\mat{D}_\lambda^{-1}\cdot\vec{y}\begin{cases}\leq(\min_{i}d_i^2+\lambda)^{-1}r^2\\\geq(\max_{i}d_i^2+\lambda)^{-1}r^2\end{cases},
 \end{align*}
 and thus $0\leq\min_id_i^2\leq r^2-\lambda\leq\max_id_i^2$, which implies 
 \begin{align*}
 \lim_{r\to\infty}\frac{r^2-\lambda}{r^2}=0.
 \end{align*}
 Set $\hat{d}=\max_id_i$, then the behavior of the algebraic functions $\delta$ and $\mu$ results as
\begin{align*}
 \frac{\delta-r^3}{r^3}&\leq r^{-3}\left((\hat{d}^2+\lambda)^{3/2}-r^3\right)=\left(\frac{\hat{d}^2}{r^2}+\frac{\lambda}{r^2}\right)^{3/2}-1\to0,\\
 \frac{\mu-r^2}{r^2}&\leq r^{-2}\left(\sum_iy_i^2/(\hat{d}^2+\lambda)^2\right)^{-1}-1=r^{-4}(\hat{d}^2+\lambda)^2-1=\left(\frac{\hat{d}^2}{r^2}+\frac{\lambda}{r^2}\right)^2-1\to0.
\end{align*}
For the integral functions the following statement is used: Let $g:(0,\infty)\to\R^+$ be a continuous function with $\int_0^\infty g\,\diffd s<\infty$. Define $f(t,Q)=\int_t^Qg\,\diffd s$, then for any $Q\in\R^+$ it holds
\begin{align*}
 f(\lambda(\vec{y}),Q)-f(r^2(\vec{y}),Q)=\int_{\lambda(\vec{y})}^{r^2(\vec{y})}g\,\diffd s\leq (r^2-\lambda)\max_{s\in[\lambda,r^2]}g\leq \hat{d}^2\max_{s\in[\lambda,r^2]}g.
\end{align*}
Since $Q$ was arbitrary, this is still true for $Q\to\infty$. Another statement also needed is: Let $\alpha,a,s>0$ then $1-(a/s+1)^{-\alpha}\leq\alpha a/s$. This can be directly concluded from the fact that the function $f(s)=\alpha a/s+(a/s+1)^{-\alpha}$ is strictly decreasing and $\lim_{s\to\infty}f=1$. Consequently, 
\begin{align*}
 \left|\frac{\chi-2r^{-1}}{2r^{-1}}\right|&\leq \frac{1}{2r^{-1}}\left|\int_\lambda^{r^2}\delta(s)^{-1}\,\diffd s\right|+\frac{1}{2r^{-1}}\left|\int_{r^2}^\infty \delta(s)^{-1}- s^{-3/2}\,\diffd s\right|\\
 &\leq\frac{\hat{d}^2}{2r^{-1}}\max_{s\in[\lambda,r^2]}\delta(s)^{-1}+\frac{1}{2r^{-1}}\int_{r^2}^\infty s^{-3/2}-\delta(s)^{-1}\,\diffd s\\
 &\leq\frac{\hat{d}^2\lambda^{-3/2}}{2r^{-1}}+\frac{1}{2r^{-1}}\int_{r^2}^\infty s^{-3/2}\left(1-\left(\hat{d}^2s^{-1}+1\right)^{-3/2}\right)\,\diffd s\\
 &\leq\frac{\hat{d}^2}{2}r^{-2}\left(\frac{\lambda}{r^2}\right)^{-3/2}+\frac{1}{2r^{-1}}\int_{r^2}^\infty s^{-3/2} \frac{3}{2}\hat{d}^2s^{-1}\,\diffd s=\frac{\hat{d}^2}{2}\left(r^{-2}\left(\frac{\lambda}{r^2}\right)^{-3/2}+r^{-2}\right)\to0.
\end{align*}
The same steps lead to
\begin{align*}
 \left|\frac{\alpha_i-2/3r^{-3}}{2/3r^{-3}}\right|&\leq \frac{\hat{d}^2}{2/3}r^{-2}\left(\left(\frac{\lambda}{r^2}\right)^{-5/2}+1\right)\to0,\\
 \left|\frac{\beta_i-2/5r^{-5}}{2/5r^{-5}}\right|&\leq \frac{\hat{d}^2}{2/5}r^{-2}\left(\left(\frac{\lambda}{r^2}\right)^{-7/2}+1\right)\to0,\\
 \left|\frac{\gamma_i-2/3r^{-3}}{2/3r^{-3}}\right|&\leq \frac{\hat{d}^2}{2/3}r^{-2}\left(\left(\frac{\lambda}{r^2}\right)^{-5/2}+\frac{7}{5}\right)\to0.
\end{align*}
\end{proof}

We summarize some results of \cite{Oberbeck1876, Edwardes1893, Jeffery1922} for the disturbance flow of an ellipsoid in a Stokes flow in Lemma~\ref{lem_appAnSt_OberJeff}.

\begin{lemA}[Oberbeck and Jeffery solutions]\label{lem_appAnSt_OberJeff}
Consider an ellipsoid with its geometry associated functions. Let $\vec{v}\in\R^3,\mat{A}\in\R^{3\times3}$ be a constant vector, respectively matrix, where $\tr(\mat{A})=0$. Define
 \begin{align}
  \vec{u}_{Ob}&=(\chi\mat{I}-\nabla_{\vec{y}}\chi\otimes\vec{y}+0.5\partial_{\vec{y}\vec{y}}\omega\cdot\mat{D}^2)\cdot\mat{O}\cdot\vec{v},\label{eq_appAnSt_oberbeck}\\
  \vec{u}_{Je}&=\begin{bmatrix}\partial_{y_1}\psi_1&\partial_{y_1}\psi_2&\partial_{y_1}\psi_3\\\partial_{y_2}\psi_1&\partial_{y_2}\psi_2&\partial_{y_2}\psi_3\\\partial_{y_3}\psi_1&\partial_{y_3}\psi_2&\partial_{y_3}\psi_3\end{bmatrix}\cdot\begin{pmatrix}R\\S\\T\end{pmatrix}+\nabla_{\vec{y}}\times\begin{pmatrix}U\psi_1\\V\psi_2\\W\psi_3\end{pmatrix}+\partial_{\vec{y}\vec{y}}\omega\cdot\mat{M}^T\cdot\vec{y}-\mat{M}\cdot\nabla_{\vec{y}}\omega, \label{eq_appAnSt_jeffery}
 \end{align}
as well as $p_{Ob}=-2\nabla_{\vec{y}}\chi\cdot\mat{O}\cdot\vec{v}$ and $p_{Je}=2\mat{M}:\partial_{\vec{y}\vec{y}}\omega$
with the matrices
 \begin{align*}
  \mat{O}=\diag(\chi^o+d_i^2\alpha_i^o,i=1,2,3)^{-1},\qquad \mat{M}=\begin{bmatrix}A&H&G^\star\\H^\star&B&F\\G&F^\star&C\end{bmatrix}
 \end{align*}
 and the corresponding coefficients given by
\begin{align*}
&\hspace*{30ex}A=-\frac{2\gamma_1^o A_{11}-\gamma_2^oA_{22}-\gamma_3^oA_{33}}{6(\gamma_2^o\gamma_3^o+\gamma_1^o\gamma_3^o+\gamma_1^o\gamma_2^o)},\\
B&=-\frac{2\gamma_2^oA_{22}-\gamma_3^oA_{33}-\gamma_1^oA_{11}}{6(\gamma_2^o\gamma_3^o+\gamma_1^o\gamma_3^o+\gamma_1^o\gamma_2^o)},\hspace*{30.5ex}C=-\frac{2\gamma_3^oA_{33}-\gamma_1^oA_{11}-\gamma_2^oA_{22}}{6(\gamma_2^o\gamma_3^o+\gamma_1^o\gamma_3^o+\gamma_1^o\gamma_2^o)},\\
 F&=-\frac{\alpha_2^o(A_{23}+A_{32})/2-d_3^2\beta_1^o(A_{32}-A_{23})/2}{2\beta_1^o(d_2^2\alpha_2^o+d_3^2\alpha_3^o)},\quad F^\star=-\frac{\alpha_3^o(A_{23}+A_{32})/2+d_2^2\beta_1^o(A_{32}-A_{23})/2}{2\beta_1^o(d_2^2\alpha_2^o+d_3^2\alpha_3^o)},\\
 G&=-\frac{\alpha_3^o(A_{13}+A_{31})/2-d_1^2\beta_2^o(A_{13}-A_{31})/2}{2\beta_2^o(d_3^2\alpha_3^o+d_1^2\alpha_1^o)},\quad G^\star=-\frac{\alpha_1^o(A_{13}+A_{31})/2+d_3^2\beta_2^o(A_{13}-A_{31})/2}{2\beta_2^o(d_3^2\alpha_3^o+d_1^2\alpha_1^o)},\nonumber\\
 H&=-\frac{\alpha_1^o(A_{12}+A_{21})/2-d_2^2\beta_3^o(A_{21}-A_{12})/2}{2\beta_3^o(d_1^2\alpha_1^o+d_2^2\alpha_2^o)},\quad H^\star=-\frac{\alpha_2^o(A_{12}+A_{21})/2+d_1^2\beta_3^o(A_{21}-A_{12})/2}{2\beta_3^o(d_1^2\alpha_1^o+d_2^2\alpha_2^o)}
 \end{align*}
 \begin{align*}
 R&=(A_{23}+A_{32})/(2\beta_1^o),\hspace*{11.5ex} S=(A_{13}+A_{31})/(2\beta_2^o),\hspace*{13.1ex} T=(A_{12}+A_{21})/(2\beta_3^o),\\
 U&=2d_2^2B-2d_3^2C, \hspace*{19ex} V=2d_3^2C-2d_1^2A,\hspace*{20.8ex} W=2d_1^2A-2d_2^2B.\nonumber
\end{align*}
Then $(\vec{u},p)\in\{(\vec{u}_{Ob},p_{Ob}),(\vec{u}_{Je},p_{Je})\}$ is a solution of
\begin{align*}
 \nabla_{\vec{y}}\cdot\mat{S}[\vec{u}]^T&=\vec{0},\qquad \qquad \nabla_{\vec{y}}\cdot\vec{u}=0, &&\vec{y}\in\R^3\setminus\aset{E},\\
 \vec{u}&=\vec{h},&&\vec{y}\in\partial\aset{E},\\
 \vec{u}&\to\vec{0},&&\|\vec{y}\|\to\infty,
\end{align*}
with the decay properties $\|\vec{u}\|\leq c\|\vec{y}\|^{-1}$ and $\|\nabla_{\vec{y}}\vec{u}\|,|p|\leq \hat{c}\|\vec{y}\|^{-2}$, $c,\hat c\geq 0$. The Dirichlet condition reads as $\vec{h}=\vec{v}$ for $\vec{u}=\vec{u}_{Ob}$ and $\vec{h}=\mat{A}\cdot\vec{y}$ for $\vec{u}=\vec{u}_{Je}$.
\end{lemA}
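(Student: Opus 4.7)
The plan is to directly verify by computation that the stated velocity/pressure pairs satisfy the stationary Stokes system, the boundary data, and the decay estimates, exploiting heavily the harmonicity identities \eqref{eq_appAnSt_laplace}. The proof splits into four parts done once for the Oberbeck ansatz and once for the Jeffery ansatz: (i) divergence-freeness, (ii) momentum balance $\Delta_{\vec{y}}\vec{u}=\nabla_{\vec{y}}p$, (iii) boundary values on $\partial\aset{E}$, and (iv) far-field decay. Steps (i) and (ii) are algebraic consequences of $\Delta\chi=\Delta\omega=\Delta\psi_i=0$; step (iv) follows immediately from Lemma~\ref{lem_asym_props_of_coefficients} by counting orders in $r=\|\vec{y}\|$; step (iii) is the one that genuinely uses the definitions of the coefficient matrices $\mat{O}$ and $\mat{M}$ and is the main place where the geometry-specific constants $\chi^o,\alpha_i^o,\beta_i^o,\gamma_i^o$ enter.

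First I would tackle the Oberbeck pair. Setting $\vec{a}=\mat{O}\cdot\vec{v}$, a short computation using $\Delta\chi=0=\Delta\omega$ and the product rule $\Delta(fg)=(\Delta f)g+2\nabla f\cdot\nabla g+f\,\Delta g$ gives $\nabla_{\vec{y}}\cdot\vec{u}_{Ob}=\nabla\chi\cdot\vec{a}-\nabla\chi\cdot\vec{a}+0=0$ and $\Delta_{\vec{y}}\vec{u}_{Ob}=-2\,\partial_{\vec{y}\vec{y}}\chi\cdot\vec{a}=\nabla_{\vec{y}}p_{Ob}$. For the Dirichlet trace, one evaluates $\vec{u}_{Ob}$ at $\lambda=0$ using \eqref{eq_appAnSt_derivatives}; the cross-terms proportional to $(\mu^o/\delta^o)(\mat{D}^{-2}\vec{y})\otimes\vec{y}$ coming from $-\nabla\chi\otimes\vec{y}$ and from $\tfrac12\partial_{\vec{y}\vec{y}}\omega\cdot\mat{D}^2$ cancel exactly (after using $(\mat{D}^{-2}\vec{y})_j d_j^2=y_j$ componentwise), leaving $\operatorname{diag}(\chi^o+d_i^2\alpha_i^o)\cdot\mat{O}\cdot\vec{v}=\vec{v}$ by the very definition of $\mat{O}$. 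The decay bounds then follow from $\chi\sim 2r^{-1}$, $\nabla\chi\sim r^{-2}$, $\partial_{\vec{y}\vec{y}}\omega\sim r^{-3}$ of Lemma~\ref{lem_asym_props_of_coefficients}.

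For the Jeffery pair the first two steps go through by the same mechanism: each of the building blocks $\partial_{y_i}\psi_j$, $\nabla\times(U\psi_1,V\psi_2,W\psi_3)$, $\partial_{\vec{y}\vec{y}}\omega\cdot\mat{M}^T\vec{y}$ and $\mat{M}\cdot\nabla\omega$ is divergence-free thanks to $\Delta\psi_i=\Delta\omega=0$, and applying $\Delta$ to each block produces either zero or, in the case of $\partial_{\vec{y}\vec{y}}\omega\cdot\mat{M}^T\vec{y}$, precisely $2\nabla(\mat{M}:\partial_{\vec{y}\vec{y}}\omega)=\nabla p_{Je}$. The real work is step (iii): at $\lambda=0$ one substitutes the explicit formulas \eqref{eq_appAnSt_derivatives} for $\partial_{y_\ell}\psi_i$, $\partial_{\vec{y}}\omega$ and $\partial_{\vec{y}\vec{y}}\omega$, splits the resulting boundary expression into a symmetric part (linear in $A_{ij}+A_{ji}$) and an antisymmetric part (linear in $A_{ij}-A_{ji}$), and imposes $\vec{u}_{Je}|_{\partial\aset{E}}=\mat{A}\cdot\vec{y}$. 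Matching components produces linear systems whose unique solutions are exactly the stated $A,B,C,F,F^\star,G,G^\star,H,H^\star$ (and consequently the $R,S,T,U,V,W$ via the algebraic relations given); the condition $\tr(\mat{A})=0$ is needed at this stage to make the diagonal system for $A,B,C$ consistent, since adding the three equations for $A_{11},A_{22},A_{33}$ eliminates the unknowns on the left. Decay of $\vec{u}_{Je}$ and $p_{Je}$ then follows again from Lemma~\ref{lem_asym_props_of_coefficients}: $\psi_i\sim r^{-3}$, so its first derivatives are $\mathcal{O}(r^{-4})$, while $\nabla\omega\sim r^{-1}$ and $\partial_{\vec{y}\vec{y}}\omega\sim r^{-3}$.

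The main obstacle is the bookkeeping in step (iii) for the Jeffery solution: one must carefully untangle the symmetric versus antisymmetric parts of $\mat{A}$ at the boundary and check that the choice of denominators $\beta_i^o(d_j^2\alpha_j^o+d_k^2\alpha_k^o)$ and $\gamma_j^o\gamma_k^o+\gamma_k^o\gamma_i^o+\gamma_i^o\gamma_j^o$ is forced by inverting the resulting $3\times 3$ (diagonal) and $2\times 2$ (per off-diagonal direction) systems. Everything else—divergence-freeness, the momentum equation, and decay—is an essentially mechanical application of the identities \eqref{eq_appAnSt_derivatives}, \eqref{eq_appAnSt_laplace} and Lemma~\ref{lem_asym_props_of_coefficients}; we would keep this part compressed and refer to the original works of Oberbeck and Jeffery for the detailed algebra.
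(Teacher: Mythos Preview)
Your plan matches the paper's proof essentially step for step: verify incompressibility and the momentum balance via the harmonicity identities $\Delta\chi=\Delta\omega=\Delta\psi_i=0$, check the Dirichlet trace at $\lambda=0$ using the explicit derivative formulas (done in full for Oberbeck, ``analogously'' for Jeffery), and read off the decay from Lemma~\ref{lem_asym_props_of_coefficients}. One small correction to your Jeffery divergence argument: the blocks $\partial_{\vec{y}\vec{y}}\omega\cdot\mat{M}^T\vec{y}$ and $-\mat{M}\cdot\nabla_{\vec{y}}\omega$ are \emph{not} individually divergence-free---their divergences are $\mat{M}:\partial_{\vec{y}\vec{y}}\omega$ and $-\mat{M}:\partial_{\vec{y}\vec{y}}\omega$, which cancel (this is exactly how the paper writes it); also $\|\nabla_{\vec{y}}\omega\|\lesssim r^{-2}$ rather than $r^{-1}$, though your weaker estimate still suffices.
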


\begin{proof}
The proof of this lemma can be found in \cite{Oberbeck1876} for the translational boundary condition and in \cite{Jeffery1922} for the linear one. For the sake of completeness, we show the relevant steps here. First, using $0=\Delta_{\vec{y}}\chi=\Delta_{\vec{y}}\omega=\Delta_{\vec{y}}\psi_j$ from \eqref{eq_appAnSt_laplace} implies
\begin{align*}
 \nabla_{\vec{y}}\cdot\vec{u}_{Ob}&=\left(\nabla_{\vec{y}}\chi-\nabla_{\vec{y}}\chi-\vec{y}\Delta_{\vec{y}}\chi+0.5\nabla_{\vec{y}}\Delta_{\vec{y}}\omega\cdot\mat{D}^2\right)\cdot\mat{O}\cdot\vec{v}=0,\\
 \nabla_{\vec{y}}\cdot\vec{u}_{Je}&=\sum_j\Delta_{\vec{y}}\psi_ja_j^R+\nabla_{\vec{y}}\Delta_{\vec{y}}\omega\cdot\mat{M}^T\cdot\vec{y}+\mat{M}:\partial_{\vec{y}\vec{y}}\omega-\mat{M}:\partial_{\vec{y}\vec{y}}\omega=0.
\end{align*}
by means of the identity $\nabla\cdot\nabla\times\vec{w}=0$ for any vector $\vec{w}$. Here, $\vec{a}^R=(R,S,T)^T$. Moreover,
\begin{align*}
 \Delta_{\vec{y}}\vec{u}_{Ob}&=\left(\Delta_{\vec{y}}\chi\mat{I}-\nabla_{\vec{y}}\Delta_{\vec{y}}\chi\otimes\vec{y}-2\partial_{\vec{y}\vec{y}}\chi+0.5\partial_{\vec{y}\vec{y}}\Delta_{\vec{y}}\omega\cdot\mat{D}^2\right)\cdot\mat{O}\cdot\vec{v}=\nabla_{\vec{y}}(-2\nabla_{\vec{y}}\chi\cdot\mat{O}\cdot\vec{v}),\\
 \Delta_{\vec{y}}\vec{u}_{Je}&=\nabla_{\vec{y}}\sum_j\Delta_{\vec{y}}\psi_ja_j^R+\nabla_{\vec{y}}\times(U\Delta_{\vec{y}}\psi_1,V\Delta_{\vec{y}}\psi_2,W\Delta_{\vec{y}}\psi_3)^T+\partial_{\vec{y}\vec{y}}\Delta_{\vec{y}}\omega\cdot\mat{M}^T\cdot\vec{y}\\
 &\quad+2\nabla_{\vec{y}}\partial_{\vec{y}\vec{y}}\omega:\mat{M}^T-\mat{M}\cdot\nabla_{\vec{y}}\Delta_{\vec{y}}\omega=\nabla_{\vec{y}}(2\partial_{\vec{y}\vec{y}}\omega:\mat{M}).
\end{align*}
The matrices $\mat{O}$, $\mat{A}$ and the coefficients $R,\ldots,W$ are chosen in a way such that that the boundary conditions on $\partial\aset{E}$ hold:
\begin{align*}
 \vec{u}_{Ob} &= \Bigg(\chi^o\mat{I}+2\frac{(\vec{y}\cdot\mat{D}^{-4}\cdot\vec{y})^{-1}}{\det(\mat{D})}\mat{D}^{-2}\cdot\vec{y}\otimes\vec{y},\\
 &\qquad+0.5\left(2\diag(\alpha_i^o,i=1,2,3)-4\frac{(\vec{y}\cdot\mat{D}^{-4}\cdot\vec{y})^{-1}}{\det(\mat{D})}\mat{D}^{-2}\cdot\vec{y}\otimes\mat{D}^{-2}\cdot\vec{y}\right)\mat{D}^2\Bigg)\cdot\mat{O}\cdot\vec{v}\\
 &=\left(\chi^o\mat{I}+\diag(\alpha_i^o,i=1,2,3)\cdot\mat{D}^2\right)\cdot\diag(\chi^o+d_i^2\alpha_i^o,i=1,2,3)^{-1}\cdot\vec{v}=\vec{v},
\end{align*}
analogously for $\vec{u}_{Je}$. Last, the decreasing properties for $r=\|\vec{y}\|\to\infty$ are shown. 
This follows from the definition of the velocities~\eqref{eq_appAnSt_oberbeck} and~\eqref{eq_appAnSt_jeffery} as well as the corresponding pressures, 
\begin{align*}
 \|\vec{u}_{Ob}\|&\leq c(|\chi|+\|\partial_{\vec{y}}\chi\|r+\|\partial_{\vec{y}\vec{y}}\omega\|),&& |p_{Ob}|\leq c\|\partial_{\vec{y}}\chi\|,\\
 \|\vec{u}_{Je}\|&\leq c(\max_k \|\partial_{\vec{y}}\psi_k\|+\|\partial_{\vec{y}\vec{y}}\omega\|r+\|\partial_{\vec{y}}\omega\|),&& |p_{Je}|\leq c\|\partial_{\vec{y}\vec{y}}\omega\|,\\
 \|\partial_{\vec{y}}\vec{u}_{Ob}\|&\leq c(\|\partial_{\vec{y}}\chi\|+\|\partial_{\vec{y}\vec{y}}\chi\|r+\|\partial_{\vec{y}\vec{y}\vec{y}}\omega\|), &&\\
 \|\partial_{\vec{y}}\vec{u}_{Je}\|&\leq c(\max_k\|\partial_{\vec{y}\vec{y}}\psi_k\|+\|\partial_{\vec{y}\vec{y}\vec{y}}\omega\|r+\|\partial_{\vec{y}\vec{y}}\omega\|), &&
\end{align*}
with appropriate constants $c>0$. For $r$ sufficiently big, it is known from \eqref{eq_appAnSt_derivatives}, Lemma~\ref{lem_asym_props_of_coefficients} that
\begin{align*}
 |\chi|&\leq cr^{-1}, \quad\hspace*{1.8ex} \|\partial_{\vec{y}}\chi\|\leq cr^{-2} ,\quad\hspace*{0.2ex} \|\partial_{\vec{y}\vec{y}}\chi\|\leq cr^{-3},\quad\hspace*{2ex} \|\partial_{\vec{y}}\omega\|\leq cr^{-2}\\
 \|\partial_{\vec{y}\vec{y}}\omega\|&\leq cr^{-3},\quad \|\partial_{\vec{y}\vec{y}\vec{y}}\omega\|\leq cr^{-4},\quad \|\partial_{\vec{y}}\psi_k\|\leq cr^{-4}, \quad \|\partial_{\vec{y}\vec{y}}\psi_k\|\leq cr^{-5},
\end{align*}
yielding the proposed decay properties:
\begin{align*}
 \|\vec{u}_{Ob}\|&\leq cr^{-1},\quad\hspace*{2ex} |p_{Ob}|\leq cr^{-2},\quad\hspace*{2ex} \|\partial_{\vec{y}}\vec{u}_{Ob}\|\leq cr^{-2},\\
 \|\vec{u}_{Je}\|&\leq cr^{-2},\quad |p_{Je}|\leq cr^{-3}, \quad \|\partial_{\vec{y}}\vec{u}_{Je}\|\leq cr^{-3}.
\end{align*}
\end{proof}

\subsection{Determination of surface moments}\label{subsec_appAnSt_detSurfMom}
For the solvability conditions~\eqref{eq_asymAn_solvCondAll} we provide the surface moments of an ellipsoid. Computing the Newtonian stresses of Oberbeck and Jeffery solutions for the Stokes problems (Lemma~\ref{lem_appAnSt_OberJeff}) on $\partial\aset{E}$ yields
\begin{align*}
  \mat{S}[\vec{u}_{Ob}](\vec{y})&=\frac{4\mu}{\delta}\left(-(\tilde{\vec{v}}\cdot\mat{D}^{-2}\cdot\vec{y})\mat{I}-\tilde{\vec{v}}\otimes\mat{D}^{-2}\cdot\vec{y}-\mat{D}^{-2}\cdot\vec{y}\otimes\tilde{\vec{v}}+2\mu(\tilde{\vec{v}}\cdot\mat{D}^{-2}\cdot\vec{y})\mat{D}^{-2}\cdot\vec{y}\otimes\mat{D}^{-2}\cdot\vec{y}\right),\\
 \mat{S}[\vec{u}_{Je}](\vec{y})&=\mat{A}+\mat{A}^T-16\frac{\mu^2}{\delta}(\vec{y}\cdot\mat{D}^{-2}\cdot\mat{M}\cdot\mat{D}^{-2}\cdot\vec{y})\mat{D}^{-2}\cdot\vec{y}\otimes\mat{D}^{-2}\cdot\vec{y}\\
 &\qquad+\frac{8\mu}{\delta}\left(\mat{M}\cdot\mat{D}^{-2}\cdot\vec{y}\otimes\mat{D}^{-2}\cdot\vec{y}+\mat{D}^{-2}\cdot\vec{y}\otimes\mat{M}\cdot\mat{D}^{-2}\cdot\vec{y}\right)\\
 &\qquad-4\mat{M}:(\diag(\alpha_1^o,\alpha_2^o,\alpha_3^o)-2\frac{\mu}{\delta}\mat{D}^{-2}\cdot\vec{y}\otimes\mat{D}^{-2}\cdot\vec{y})\mat{I},
\end{align*}
where $\mu=(\vec{y}\cdot\mat{D}^{-4}\cdot\vec{y})^{-1}$, $\delta=\det(\mat{D})$ and $\tilde{\vec{v}}=\mat{O}\cdot\vec{v}$. For $\vec{z}\in\S_1^2$ the following identities hold:
\begin{subequations}\label{eq_appAnSt_Stresses}
\begin{align}
 \mat{S}[\vec{u}_{Ob}](\mat{D}\cdot\vec{z})\cdot\mat{D}^{-1}\cdot\vec{z}&=-\frac{4}{\delta}\tilde{\vec{v}},\label{eq_appAnSt_SOb}\\
 \mat{S}[\vec{u}_{Je}](\mat{D}\cdot\vec{z})\cdot\mat{D}^{-1}\cdot\vec{z}&=\left(\mat{A}+\mat{A}^T+\frac{8}{\delta}\mat{M}-4(\mat{M}:\diag(\alpha_i^o,i=1,2,3))\mat{I}\right)\cdot\mat{D}^{-1}\cdot\vec{z}.\nonumber
\end{align}
The last result simplifies further, if $\mat{A}$ is skew-symmetric, i.e. $\mat{A}=B(\vec{v})$ for some $\vec{v}\in\R^3$. Then,
\begin{align}
 \mat{S}[\vec{u}_{Je}](\mat{D}\cdot\vec{z})\cdot\mat{D}^{-1}\cdot\vec{z}&=-\frac{4}{\delta}B(\diag(c_i,i=1,2,3)\cdot\vec{v})\cdot\mat{D}\cdot\vec{z},\label{eq_appAnSt_SJeff}
\end{align}
\end{subequations}
with the constants $c_i=(d_j^2\alpha_j^o+d_k^2\alpha_k^o)^{-1}$, where $(i,j,k)$ is a permutation of $(1,2,3)$.

To evaluate the integrals over the surface of an ellipsoid we transform them to the unit sphere. According to~\cite{Junk2007} it holds:
\begin{align}
 \int_{\partial\aset{E}}\mat{F}(\vec{y})\cdot\vec{n}(\vec{y})\diffd s(\vec{y})&=\int_{\partial\aset{E}}\mat{F}(\vec{y})\cdot\frac{\mat{D}^{-2}\cdot\vec{y}}{\|\mat{D}^{-2}\cdot\vec{y}\|}\diffd s(\vec{y})=\int_{\S_1^2}\mat{F}(\mat{D}\cdot\vec{z})\cdot\frac{\mat{D}^{-1}\cdot\vec{z}}{\|\mat{D}^{-1}\cdot\vec{z}\|}\det(\mat{D})\|\mat{D}^{-1}\cdot\vec{z}\|\diffd s(\vec{z})\nonumber\\
 &=\delta\int_{\S_1^2}\mat{F}(\mat{D}\cdot\vec{z})\cdot\mat{D}^{-1}\cdot\vec{z}\diffd s(\vec{z}),\label{eq_appAnSt_transformToS}
\end{align}
for an integrable function $\mat{F}:\R^3\to\R^3\times\R^3$. 

Now consider the six Stokes problems in Lemma~\ref{lem_asymAna_solvCond}. According to Lemma~\ref{lem_appAnSt_OberJeff} the analytical solutions $(\vec{w}_q,p_q)$ are given by~\eqref{eq_appAnSt_oberbeck} with $\vec{v}=\vec{e}_q$ for $q=1,2,3$ and by~\eqref{eq_appAnSt_jeffery} with $\mat{A}=B(-\vec{e}_{q-3})$ for $q=4,5,6$. To calculate the surface moments $\vec{s}_q,\vec{t}_q,\mat{V}_q$ and $W_q$, we first apply the transformation to the unit sphere~\eqref{eq_appAnSt_transformToS} and afterwards use~\eqref{eq_appAnSt_SOb} or~\eqref{eq_appAnSt_SJeff} for the corresponding moments:
\begin{align*}
 \vec{s}_q&=\int_{\partial\aset{E}}B(\vec{y})\cdot\mat{S}[\vec{w}_q]\cdot\vec{n}\,\diffd s(\vec{y})\hspace*{2ex}=-4\int_{\S_1^2}B(\mat{D}\cdot\vec{z})\cdot\vec{v}_q(\vec{z})\diffd s(\vec{z}),\\
 \vec{t}_q&=\int_{\partial\aset{E}}\mat{S}[\vec{w}_q]\cdot\vec{n}\,\diffd s(\vec{y})\hspace*{8.55ex}=-4\int_{\S_1^2}\vec{v}_q(\vec{z})\diffd s(\vec{z}),\\
 (\mat{V}_q)_{ij}&=\left(\int_{\partial\aset{E}}y_i\mat{S}[\vec{w}_q]\cdot\vec{n}\,\diffd s(\vec{y})\right)_j\hspace*{2.3ex}=-4\left(\int_{\S_1^2}(\mat{D}\cdot\vec{z})_i\vec{v}_q(\vec{z})\diffd s(\vec{z})\right)_j,\\
 (W_q)_{ijk}&=\left(\int_{\partial\aset{E}}y_iy_j\mat{S}[\vec{w}_q]\cdot\vec{n}\,\diffd s(\vec{y})\right)_k=-4\left(\int_{\S_1^2}(\mat{D}\cdot\vec{z})_i(\mat{D}\cdot\vec{z})_j\cdot\vec{v}_q(\vec{z})\diffd s(\vec{z})\right)_k,
\end{align*}
with $\vec{v}_q=\mat{O}\cdot\vec{e}_q$ and $\vec{v}_{q+3}=-c_qB(\vec{e}_{q})\cdot\mat{D}\cdot\vec{z}$ for $q=1,2,3$. Since the integral of a homogeneous polynomial $p(\vec{z})$ fulfills $\int_{\S_1^2}p(\vec{z})\diffd s(\vec{z}) = 0$ if $p$ has odd degree, it follows
\begin{align*}
 \vec{s}_q&=\vec{0}, \qquad\hspace*{.5ex} \mat{V}_q = \mat{0}, \qquad \text{for $q=1,2,3$},\\
 \vec{t}_q&=\vec{0}, \qquad W_q = 0, \qquad \hspace*{.2ex} \text{for $q=4,5,6$}.
\end{align*}
The other moments follow, considering $\int_{\S_1^2}(\mat{D}\cdot\vec{z})_i(\mat{D}\cdot\vec{z})_j\diffd s=(\mat{D}^2)_{ij}4\pi/3$ and $|\S_1^2| = 4\pi$,
\begin{align*}
 (\vec{s}_{q+3})_k &= \frac{16\pi}{3}\sum_{i,j,\ell,m}(c_q\vec{e}_q)_\ell\epsilon_{jki}\epsilon_{j\ell m}(\mat{D}^2)_{im}=\frac{16\pi}{3}\sum_{i\ell m}(c_q\vec{e}_q)_\ell(\delta_{k\ell}\delta_{im}-\delta_{km}\delta_{i\ell})(\mat{D}^2)_{im}\\
 &=-c_q\frac{16\pi}{3}\left(\left(\mat{D}^2-\tr(\mat{D}^2)\mat{I}\right)\vec{e}_{q}\right)_k=\zeta_{q+3}|\aset{E}|\left(\left(\mat{D}^2-\tr(\mat{D}^2)\mat{I}\right)\vec{e}_{q}\right)_k,\\
 (\mat{V}_{q+3})_{ij} &= \frac{16\pi}{3}\sum_{k,\ell}(\mat{D}^2)_{i\ell}\epsilon_{k\ell j}(c_q\vec{e}_q)_k = -c_q\frac{16\pi}{3}(\mat{D}^2\cdot B(\vec{e}_q))_{ij} = \zeta_{q+3}|\aset{E}|(\mat{D}^2\cdot B(\vec{e}_q))_{ij},\\
 \vec{t}_q &= -16\pi\mat{O}\cdot\vec{e}_q = 3\zeta_q|\aset{E}|\vec{e}_q,\\
 (W_{q})_{ijk} &= -\frac{16\pi}{3}\mat{D}^2_{ij}(\mat{O}\cdot\vec{e}_q)_k=\zeta_q|\aset{E}|(\mat{D}^2\otimes\vec{e}_q)_{ijk},
\end{align*}
for $q=1,2,3$, where $\zeta_{q}=-4\det(\mat{D})^{-1}O_{qq}$ and $\zeta_{q+3}=-4\det(\mat{D})^{-1}c_q$, with $\mat{O}$ given in Lemma~\ref{lem_appAnSt_OberJeff} and the constants $c_i=(d_j^2\alpha_j^o+d_k^2\alpha_k^o)^{-1}$, here $(i,j,k)$ is a permutation of $(1,2,3)$.

\subsection*{Acknowledgements}
The authors acknowledge the support by the German BMBF (Project OPAL 05M13).

\bibliographystyle{siam}
\bibliography{references}

\end{document}